\documentclass[11pt]{article}


\usepackage{amssymb,amsmath,amscd,amsthm,xy,enumitem,mathabx,mathrsfs,stmaryrd}
\usepackage{pstricks,pst-node}
\xyoption{all}

\addtolength{\textheight}{1.2in}
\addtolength{\topmargin}{-.6in}
\addtolength{\textwidth}{1.5in}
\addtolength{\oddsidemargin}{-.75in}
\addtolength{\evensidemargin}{-.75in}
 
\setcounter{topnumber}{1}

\newtheorem{thm}{Theorem}[section]
\newtheorem{prp}[thm]{Proposition}
\newtheorem{lmm}[thm]{Lemma}
\newtheorem{crl}[thm]{Corollary}

\theoremstyle{definition}
\newtheorem{dfn}[thm]{Definition}

\theoremstyle{remark}
\newtheorem{rmk}[thm]{Remark}

\numberwithin{equation}{section}

\def\lra{\longrightarrow}

\def\BE#1{\begin{equation}\label{#1}}
\def\EE{\end{equation}}
\def\eref#1{(\ref{#1})}
\def\lr#1{\langle#1\rangle}
\def\flr#1{\left\lfloor{#1}\right\rfloor}
\def\blr#1{\big\langle#1\big\rangle}
\def\wt#1{\widetilde{#1}}
\def\ov#1{\overline{#1}}
\def\tn#1{\textnormal{#1}}
\def\sf#1{\textsf{#1}}
\def\wh#1{\widehat{#1}}
\def\sm#1{\begin{small}#1\end{small}}
\def\coeff#1{\llbracket{#1}\rrbracket}
\def\ch#1{\check{#1}}

\def\De{\Delta}
\def\Ga{\Gamma}
\def\La{\Lambda}
\def\Si{\Sigma}

\def\al{\alpha}
\def\de{\delta}
\def\ga{\gamma}
\def\io{\iota}
\def\ka{\kappa}
\def\om{\omega}
\def\si{\sigma}

\def\ve{\varepsilon}
\def\vph{\varphi}
\def\vp{\varpi}
\def\vr{\varrho}
\def\vt{\vartheta}

\def\cA{\mathcal A}
\def\fB{\mathfrak B}
\def\C{\mathbb C}
\def\cC{\mathcal C}  
\def\cD{\mathcal{D}}
\def\sD{\mathscr D}
\def\E{\tn{E}}
\def\sE{\mathscr E}
\def\fF{\mathfrak F}
\def\sF{\mathscr F}
\def\bF{\mathbb F}
\def\sG{\mathscr{G}}
\def\cH{\mathcal H}
\def\bI{\mathbb I}
\def\sI{\mathscr I}

\def\cJ{\mathcal J}
\def\fJ{\mathfrak j}
\def\cM{\mathcal M}
\def\fM{\mathfrak M}
\def\cN{\mathcal N}
\def\P{\mathbb P}
\def\sP{\mathscr P}
\def\Q{\mathbb Q}
\def\R{\mathbb{R}}

\def\cT{\mathcal T}
\def\sT{\mathscr{T}}
\def\cU{\mathcal U}
\def\fU{\mathfrak U}
\def\nV{\tn{V}}
\def\sV{\mathscr V}
\def\fX{\mathfrak X}
\def\Z{\mathbb{Z}}
\def\cZ{\mathcal Z}

\def\a{\mathbf a}
\def\fc{\mathfrak c}
\def\fd{\mathfrak d}
\def\ff{\mathfrak f}
\def\fg{\mathfrak g}

\def\u{\mathbf u}

\def\ale{\aleph}

\def\Aut{\tn{Aut}}

\def\codim{\tn{codim}}

\def\tnd{\tn{d}}
\def\Div{\tn{Div}}

\def\ev{\tn{\sf{ev}}}
\def\Fl{\tn{Fl}}

\def\id{\tn{id}}
\def\Im{\tn{Im}}
\def\ind{\tn{ind}}

\def\Inv{\tn{Inv}}
\def\mc{\tn{mc}}

\def\st{\tn{\sf{st}}}
\def\supp{\tn{supp}}
\def\top{\textnormal{top}}

\def\Ver{\tn{Ver}}

\def\dbar{\bar\partial}
\def\prt{\partial}
\def\eset{\emptyset}
\def\i{\infty}

\def\w{\wedge}
\def\bu{\bullet}
\def\dag{\dagger}

\begin{document}

\title{Real Ruan-Tian Perturbations}
\author{Aleksey Zinger\thanks{Partially supported by NSF grant 1500875}}
\date{\today}
\maketitle

\begin{abstract}
\noindent
Ruan-Tian deformations of the Cauchy-Riemann operator  enable 
a geometric definition of (standard) Gromov-Witten invariants of
semi-positive symplectic manifolds in arbitrary genera.
We describe an analogue of these deformations compatible with our recent 
construction of real Gromov-Witten invariants in arbitrary genera.
Our approach avoids the need for an embedding of the universal curve
into a smooth manifold and systematizes the deformation-obstruction setup
behind constructions of Gromov-Witten invariants.
\end{abstract}

\tableofcontents

\section{Introduction}
\label{intro_sec}

\noindent
The introduction of $J$-holomorphic curves techniques into symplectic topology in~\cite{Gr} 
led to definitions of (complex) \sf{Gromov-Witten} (or \sf{GW-}) 
\sf{invariants} of semi-positive symplectic manifolds in genus~0 in~\cite{MS94}
and in arbitrary genera in \cite{RT,RT2} as actual counts of 
simple \hbox{$J$-holomorphic} maps.
Local versions of the inhomogeneous deformations of the $\dbar_J$-equation pioneered
in \cite{RT,RT2} were later used to endow the moduli space of (complex)
\hbox{$J$-holomorphic} maps with a so-called \sf{virtual fundamental class}
(or \sf{VFC}) in \cite{LT,FO}
and thus to define GW-invariants for arbitrary symplectic manifolds.\\

\noindent
A \sf{real symplectic manifold} $(X,\om,\phi)$ is a symplectic manifold $(X,\om)$
with a smooth involution \hbox{$\phi\!:X\!\lra\!X$} such that \hbox{$\phi^*\om\!=\!-\om$}.
Invariant signed counts of genus~0 real curves, i.e.~those preserved by~$\phi$,
were defined for \sf{semi-positive} real symplectic 4- and 6-manifolds
in~\cite{Wel4,Wel6} in the general spirit of \cite{MS94}.
The interpretation of these counts in~\cite{Sol} in the general spirit of~\cite{LT}
removed the need for the semi-positive restriction and made
them amendable to the standard computational techniques of GW-theory;
see~\cite{PSW}, for example.
Building on the perspectives in \cite{Melissa,Sol},
genus~0 real GW-invariants for many other real symplectic manifolds were later defined
in \cite{Ge2,Teh}.
The recent work \cite{RealGWsI,RealGWsII,RealGWsIII} sets up the theory of 
real GW-invariants in arbitrary genera with conjugate pairs of insertions
and in genus~1 with arbitrary point insertions in the general spirit of~\cite{LT}.
Following a referee's suggestion, we now describe these invariants in 
the spirit of~\cite{RT,RT2};
this description is more geometric and should lead more readily to a tropical perspective 
on these invariants that has proved very powerful in studying the genus~0 real
GW-invariants of~\cite{Wel4,Wel6}.\\

\noindent
A \sf{conjugation} on a complex vector bundle $V\!\lra\!X$ 
\sf{lifting} an involution~$\phi$ on~$X$ is a vector bundle involution 
\hbox{$\vph\!:V\!\lra\!V$} covering~$\phi$ 
such that the restriction of~$\vph$ to each fiber is anti-complex linear.
A \sf{real bundle pair} \hbox{$(V,\vph)\!\lra\!(X,\phi)$}   
consists of a complex vector bundle $V\!\lra\!X$ and 
a conjugation~$\vph$ on $V$ lifting~$\phi$.
For example, 
$$(TX,\tnd\phi)\lra(X,\phi) \qquad\hbox{and}\qquad
(X\!\times\!\C^n,\phi\!\times\!\fc)\lra(X,\phi),$$
where $\fc\!:\C^n\!\lra\!\C^n$ is the standard conjugation on~$\C^n$,
are real bundle pairs.
For any real bundle pair $(V,\vph)\!\lra\!(X,\phi)$, 
we denote~by
$$\La_{\C}^{\top}(V,\vph)=(\La_{\C}^{\top}V,\La_{\C}^{\top}\vph)$$
the top exterior power of $V$ over $\C$ with the induced conjugation.
A real symplectic manifold $(X,\om,\phi)$ is \sf{real-orientable} if
there exists a rank~1 real bundle pair $(L,\wt\phi)$ over $(X,\phi)$ such~that 
\BE{realorient_e}w_2(TX^{\phi})=w_1(L^{\wt\phi})^2
\qquad\hbox{and}\qquad
\La_{\C}^{\top}(TX,\tnd\phi)\approx(L,\wt\phi)^{\otimes 2}\,.\EE

\begin{dfn}\label{realorient_dfn2}
A \sf{real orientation} on a real-orientable symplectic manifold $(X,\om,\phi)$ consists~of 
\begin{enumerate}[label=(RO\arabic*),leftmargin=*]

\item\label{LBP_it} a rank~1 real bundle pair $(L,\wt\phi)$ over $(X,\phi)$
satisfying~\eref{realorient_e},

\item\label{isom_it} a homotopy class of isomorphisms of real bundle pairs 
in~\eref{realorient_e}, and

\item\label{spin_it} a spin structure~on the real vector bundle
$TX^{\phi}\!\oplus\!2(L^*)^{\wt\phi^*}$ over~$X^{\phi}$
compatible with the orientation induced by~\ref{isom_it}. 

\end{enumerate}
\end{dfn}

\vspace{.1in}

\noindent
By \cite[Theorem~1.3]{RealGWsI}, a real orientation on $(X,\om,\phi)$
orients the moduli space $\ov\fM_{g,l}(X,B;J)^{\phi}$ of
genus~$g$ degree~$B$ real $J$-holomorphic maps with $l$~conjugate pairs of  marked points
whenever the ``complex" dimension of~$X$ is odd. 
By the proof of \cite[Theorem~1.5]{RealGWsI}, it also orients 
the moduli space $\ov\fM_{1,l;k}(X,B;J)^{\phi}$ of
genus~1 maps with $k$ real marked points
outside of certain codimension~1 strata.
In general, these moduli spaces are not smooth and the above orientability statements
should be viewed in the usual moduli-theoretic (or virtual) sense.\\

\noindent
The description in~\cite{Melissa} of versal families of deformations of 
symmetric Riemann surfaces provides the necessary ingredient for 
adapting the interpretation of Gromov's topology in~\cite{LT} 
from the complex to the real setting and eliminates 
the (virtual) boundary of $\ov\fM_{g,l;k}(X,B;J)^{\phi}$.
A Kuranishi atlas for this moduli space is then obtained by carrying out
the constructions of \cite{LT,FO} in a \hbox{$\phi$-invariant} manner;
see \cite[Section~7]{Sol} and \cite[Appendix]{FOOO9}.
If oriented, this atlas determines a VFC for $\ov\fM_{g,l;k}(X,B;J)^{\phi}$
and thus gives rise to  genus~$g$ real GW-invariants of $(X,\om,\phi)$;
see \cite[Theorem~1.4]{RealGWsI}.
If this atlas is oriented only on the complement of some codimension~1 strata,
real GW-invariants can still be obtained in some special cases 
by adapting the principle of~\cite{Cho,Sol} to show that the problematic strata
are avoided by a generic path; \cite[Theorem~1.5]{RealGWsI}.
In some important situations, the real genus~$g$ GW-invariants arising from 
\cite[Theorem~1.3]{RealGWsI}  can be described as actual counts 
of curves in the spirit of \cite{RT,RT2}.\\

\noindent
For a manifold $X$, denote by 
$$H_2^S(X;\Z)\equiv\big\{u_*[S^2]\!:u\!\in\!C(S^2;X)\big\}\subset H_2(X;\Z)$$
the subset of \sf{spherical classes}. 
There are two topological types of anti-holomorphic involutions on~$\P^1$; 
they are represented~by 
$$\tau,\eta\!:\P^1\lra\P^1, \qquad z\lra\frac{1}{\bar{z}},-\frac{1}{\bar{z}}\,.$$
For a manifold $X$ with an involution~$\phi$, denote~by
\begin{gather*}
H_2^{\si}(X;\Z)^{\phi}\equiv
\big\{u_*[S^2]\!:u\!\in\!C(S^2;X),\,u\!\circ\!\si\!=\!\phi\!\circ\!u\big\}
\qquad\hbox{for}~\si\!=\!\tau,\eta, \\
H_2^{\R S}(X;\Z)^{\phi}\equiv H_2^{\tau}(X;\Z)^{\phi}\!\cup\!H_2^{\eta}(X;\Z)^{\phi}
\subset\{B\!\in\!H_2(X;\Z)\!:\phi_*B\!=\!-B\big\}
\end{gather*}
the subsets of \sf{$\si$-spherical classes} and \sf{real spherical classes}.

\begin{dfn}\label{SSP_dfn}
A symplectic  $2n$-manifold $(X,\om)$ is \sf{semi-positive}~if
$$\lr{c_1(X),B}\ge0  \qquad\forall~B\!\in\!H_2^S(X;\Z)
~~\hbox{s.t.}~\lr{\om,B}\!>\!0,~\lr{c_1(X),B}\!\ge\!3\!-\!n.$$
A real symplectic $2n$-manifold $(X,\om,\phi)$ is \sf{semi-positive}
if $(X,\om)$ is semi-positive and
\begin{alignat*}{3}
\lr{c_1(X),B}&\ge\de_{n2} &\qquad &\forall~B\!\in\!H_2^{\R S}(X;\Z)^{\phi}&~&\hbox{s.t.}~
~\lr{\om,B}\!>\!0,~\lr{c_1(X),B}\!\ge\!2\!-\!n,\\
\lr{c_1(X),B}&\ge1 &\qquad &\forall~B\!\in\!H_2^{\tau}(X;\Z)^{\phi}&~&\hbox{s.t.}~
~\lr{\om,B}\!>\!0,~\lr{c_1(X),B}\!\ge\!2\!-\!n,
\end{alignat*}
where $\de_{n2}\!=\!1$ if $n\!=\!2$ and 0 otherwise.
\end{dfn}

\noindent
The stronger middle bound in the $n\!=\!2$ case above rules out the appearance of
real degree~$B$ \hbox{$J$-holomorphic} spheres with 
$\lr{c_1(X),B}\!=\!0$ for a generic one-parameter
family of real almost complex structures on a real symplectic manifold~$(X,\om,\phi)$
and provides for the second bound in~\eref{g0emptyC_e}.
The latter in turn ensures that the expected dimension of the moduli space
of complex  degree~$B$ \hbox{$J$-holomorphic} spheres in such a family of 
almost complex structures is not smaller than
the expected dimension of the moduli space of real  degree~$B$ \hbox{$J$-holomorphic} spheres.\\

\noindent
Monotone symplectic manifolds, including all projective spaces and 
Fano hypersurfaces, are semi-positive.
The~maps
\begin{alignat*}{2}
\tau_n\!:\P^{n-1}&\lra\P^{n-1}, &\qquad [Z_1,\ldots,Z_n]&\lra[\ov{Z}_1,\ldots,\ov{Z}_n],\\
\eta_{2m}\!: \P^{2m-1}& \lra\P^{2m-1},&\qquad
[Z_1,Z_2,\ldots,Z_{2m-1},Z_{2m}]&\lra 
\big[-\ov{Z}_2,\ov{Z}_1,\ldots,-\ov{Z}_{2m},\ov{Z}_{2m-1}\big],
\end{alignat*}
are anti-symplectic involutions with respect to the standard Fubini-Study symplectic
forms~$\om_n$ on~$\P^{n-1}$ and~$\om_{2m}$ on $\P^{2m-1}$, respectively.
If 
$$k\!\ge\!0, \qquad \a\equiv(a_1,\ldots,a_k)\in(\Z^+)^k\,,$$
and $X_{n;\a}\!\subset\!\P^{n-1}$ is a complete intersection of multi-degree~$\a$
preserved by~$\tau_n$,  then $\tau_{n;\a}\!\equiv\!\tau_n|_{X_{n;\a}}$
is an anti-symplectic involution on $X_{n;\a}$ with respect to the symplectic form
$\om_{n;\a}\!=\!\om_n|_{X_{n;\a}}$. 
Similarly, if $X_{2m;\a}\!\subset\!\P^{2m-1}$ is preserved by~$\eta_{2m}$, then
$\eta_{2m;\a}\!\equiv\!\eta_{2m}|_{X_{2m;\a}}$
is an anti-symplectic involution on $X_{2m;\a}$ with respect to the symplectic form
$\om_{2m;\a}\!=\!\om_{2m}|_{X_{2m;\a}}$.
The projective spaces $(\P^{2m-1},\tau_{2m-1})$ and $(\P^{4m-1},\eta_{4m-1})$,
as well as many real complete intersections in these spaces, are real orientable;
see \cite[Proposition~2.1]{RealGWsI}.\\

\noindent
We show in this paper that the semi-positive property of Definition~\ref{SSP_dfn} 
for $(X,\om,\phi)$ plays
the same role in the real GW-theory as the 
semi-positive property for~$(X,\om)$ plays in ``classical" GW-theory.
For each element~$(J,\nu)$ of the space~\eref{cHdfn_e},
the moduli space $\ov\fM_{g,l;k}(X,B;J,\nu)^{\phi}$ 
of stable degree~$B$ genus~$g$ real 
$(J,\nu)$-maps with~$l$ conjugate pairs of marked points and $k$~real points
is coarsely stratified by the subspaces~$\fM_{\ga}(J,\nu)^{\phi}$
of maps of the same combinatorial type; see~\eref{fMgadfn_e}.
By Proposition~\ref{RTreg_prp}, the open subspace
$$\fM_{\ga}^*(J,\nu)^{\phi}\subset\fM_{\ga}(J,\nu)^{\phi}$$
consisting of simple maps in the sense of Definition~\ref{Jnumap_dfn2}
is cut out transversely by the $\{\dbar_J\!-\!\nu\}$-operator 
for a generic pair~$(J,\nu)$;
thus, it is smooth and of the expected dimension.
The image~of
$$\fM_{\ga}^{\mc}(J,\nu)^{\phi}\equiv  \fM_{\ga}(J,\nu)^{\phi}-\fM_{\ga}^*(J,\nu)^{\phi}$$
under the product of the stabilization~$\st$ and the evaluation map~$\ev$ in~\eref{evst_e} 
is covered by smooth maps from finitely many spaces $\fM_{\ga'}^*(J,\nu')^{\phi}$
of simple degree~$B'$ genus~$g'$  real maps with \hbox{$\om(B')\!<\!\om(B)$} and $g'\!\ge\!g$.
By the proof of Proposition~\ref{RTdim_prp}, the dimensions of the latter spaces 
are at least~2 less than 
the virtual dimension of $\ov\fM_{g,l;k}(X,B;J,\nu)^{\phi}$ if
 $(J,\nu)$ is generic and $(X,\om,\phi)$ is semi-positive.\\

\noindent
By Theorem~\ref{RTreal_thm}\ref{RTexist_it}, 
the restriction~\eref{RTreal_e} of~\eref{evst_e} is a pseudocycle 
for a generic pair~$(J,\nu)$ in the space~\eref{cHdfn_e} 
whenever $(X,\om,\phi)$ is a  semi-positive real symplectic manifold 
of odd ``complex" dimension with a real orientation.
Intersecting this pseudocycle with constraints 
in the Deligne-Mumford moduli space $\R\ov\cM_{g,l}$ of real curves
and in~$X$,
we obtain an interpretation of the genus~$g$  real GW-invariants 
provided by \cite[Theorem~1.4]{RealGWsI}
as  counts of real $(J,\nu)$-curves in $(X,\om,\phi)$ which depend only
on the homology classes of the constraints.
A similar conclusion applies to the genus~1 real GW-invariants 
with real marked points provided by \cite[Theorem~1.5]{RealGWsI};
see Remark~\ref{RealRT3_rmk}.\\

\noindent
For the purposes of Theorem~\ref{RTreal_thm}\ref{RTexist_it},
the $2\!-\!n$ inequalities in Definition~\ref{SSP_dfn} could be replaced by $3\!-\!n$
(which would weaken~it).
This would make its restrictions vacuous if $n\!=\!2$, i.e.~\hbox{$\dim_{\R}X\!=\!4$}.
The $2\!-\!n$ condition ensures that the conclusion of Proposition~\ref{RTdim_prp}
remains valid for a generic one-parameter family of elements~$(J,\nu)$ in the space~\eref{cHdfn_e}
and that the homology class determined by the pseudocycle~\eref{RTreal_e} is independent
of the choice of~$(J,\nu)$; see Proposition~\ref{RTdim2_prp} and 
the first statement of Theorem~\ref{RTreal_thm}\ref{RTindep_it}.
For $n\!=\!2$, this condition excludes the appearance of stable maps represented
by the two diagrams of Figure~\ref{c1B_fig} for a generic one-parameter family
of~$(J,\nu)$.
The maps of the first type are not regular solutions of the $(\dbar_J\!-\!\nu)$-equation
if $\vr\!\ge\!2$.
The maps of the second type are not regular solutions of the $(\dbar_J\!-\!\nu)$-equation
if the images of the top and bottom irreducible components are the same 
(i.e.~each of them is preserved by~$\phi$) and $\vr\!\in\!\Z^+$.
If maps of either type exist, their images under $\st\!\times\!\ev$ form a subspace 
of real codimension~1 in the image of~\eref{evst_e}.

\begin{figure}
\begin{pspicture}(-3,-1.5)(10,2)
\psset{unit=.3cm}
\psellipse[linewidth=.08](5,0)(4,2)
\psarc[linewidth=.08](3.2,-1.2){1.5}{45}{135}
\psarc[linewidth=.08](3.2,1.2){1.5}{-135}{-45}
\psarc[linewidth=.08](6.8,-1.2){1.5}{45}{135}
\psarc[linewidth=.08](6.8,1.2){1.5}{-135}{-45}
\psline[linewidth=.05]{<->}(-3.5,-3)(-3.5,3)\rput{90}(-4,0){\sm{$\si$}}
\pscircle[linewidth=.08](-1,0){2}\pscircle*(1,0){.3}
\rput(-1,0){\sm{$\vr B_0'$}}\rput(10,0){\sm{$B_0$}}
\rput(3,-4){\sm{$\lr{c_1(X),B_0'}\!=\!0,~\vr B_0'\!+\!B_0\!=\!B,~\vr\!\ge\!2$}}
\psellipse[linewidth=.08](20,0)(4,2)
\psarc[linewidth=.08](18.2,-1.2){1.5}{45}{135}
\psarc[linewidth=.08](18.2,1.2){1.5}{-135}{-45}
\psarc[linewidth=.08](21.8,-1.2){1.5}{45}{135}
\psarc[linewidth=.08](21.8,1.2){1.5}{-135}{-45}
\pscircle[linewidth=.08](20,4){2}\pscircle*(20,2){.3}
\pscircle[linewidth=.08](20,-4){2}\pscircle*(20,-2){.3}
\psline[linewidth=.05]{<->}(15.5,-3)(15.5,3)\rput{90}(15,0){\sm{$\si$}}
\rput(25,0){\sm{$B_0$}}\rput(23.3,4){\sm{$\vr B^+$}}\rput(23.3,-4){\sm{$\vr B^-$}}
\psline[linewidth=.05]{<->}(19,5)(21,3)\psline[linewidth=.05]{<->}(19,-5)(21,-3)
\rput(33,0){\begin{tabular}{l}$\phi_*B^+\!=\!-B^-$\\ $\lr{c_1(X),B^{\pm}}\!=\!0$\\
$\vr B^+\!+\!B_0\!+\!\vr B^-\!=\!B$\\ $\vr\!\in\!\Z^+$\end{tabular}}
\end{pspicture}  
\caption{Typical elements of subspaces of $\fM_{\ga}^{\mc}(\al)^{\phi}$
with codimension-one images under $\st\!\times\!\ev$
for a generic one-parameter family~$\al$ of real Ruan-Tian deformations~$(J,\nu)$
on a real symplectic 4-manifold~$(X,\om,\phi)$.
The degrees of the maps on the irreducible components of the domains are
shown next to the corresponding components.
The double-headed arrows labeled by~$\si$ indicate the involutions 
on the entire domains of the maps.
The smaller double-headed arrows indicate the involutions 
on the real images of the corresponding irreducible components of the~domain.} 
\label{c1B_fig}
\end{figure}
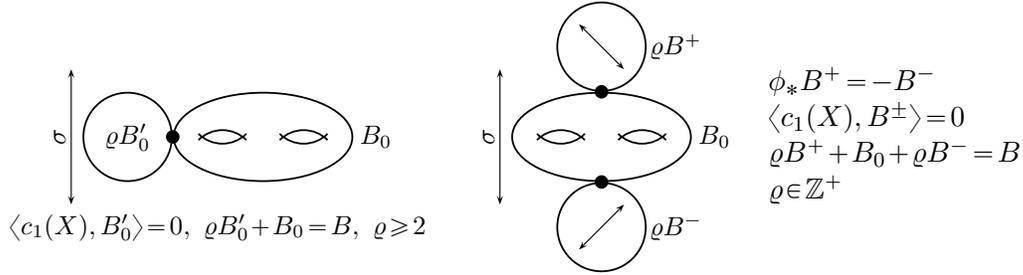

\begin{rmk}\label{SSP_rmk}
Real symplectic 4-manifolds $(X,\om,\phi)$ with classes 
$B\!\in\!H_2^{\R S}(X;\Z)^{\phi}$ such that \hbox{$\lr{\om,B}\!>\!0$} and
$\lr{c_1(X),B}\!=\!0$ are not excluded from the  constructions
of genus~0 real GW-invariants in \cite{Wel4,Sol}.
However, the geometric proofs of the invariance of the curve counts defined in these papers
neglect to consider stable maps as in Figure~\ref{c1B_fig}.
The second Hirzebruch surface \hbox{$\bF_2\!\lra\!\P^1$} contains two natural section
classes, $C_2$ and~$E$, with normal bundles of degrees~$2$ and~$-2$, respectively.
Along with the fiber class~$F$, either of them generates $H_2(\bF_2;\Z)$.
There are algebraic families \hbox{$p\!:\cC\!\lra\!S$} and \hbox{$\pi\!:\fX\!\lra\!S$},
where $S$ is a neighborhood of $0\!\in\!\C$, such~that
$$p^{-1}(0)=\P^1\!\vee\!\P^1, \qquad \pi^{-1}(0)=\bF_2, \qquad
p^{-1}(z)=\P^1, ~~\pi^{-1}(z)=\P^1\!\times\!\P^1 \quad\forall~z\!\in\!S\!-\!\{0\}.$$
The projection~$\pi$ can be viewed as an algebraic family of algebraic structures on~$\bF_2$.
By \cite[Proposition~3.2.1]{AB}, a morphism~$f$ from~$p^{-1}(0)$
of degree \hbox{$D\!=\!aC_2\!+\!bF$}, with $a\!\in\!\Z^+$ and $b\!\in\!\Z^{\ge0}$,
to~$\pi^{-1}(0)$ that passes through $4a\!+\!2b\!-\!1$ general points in~$\bF_2$ and extends to
a morphism \hbox{$\wh{f}\!:\cC\!\lra\!\fX$} restricts to an isomorphism 
from a component of~$p^{-1}(0)$ to~$E$. 
The end of the proof of \cite[Proposition~2.9]{Wel15} cites~\cite{AB} 
as establishing this conclusion for 
a generic one-parameter family of real almost complex structures~$J$ on
blowups of~$(\bF_2,E)$ away from~$E$.
This is used to claim that multiply covered disk bubbles of Maslov index~0
do not appear in a one-parameter family of almost complex structures in the proof
of \cite[Theorem~0.1]{Wel15} and that maps as in the first diagram of Figure~\ref{c1B_fig}
do not appear in the proof of \cite[Theorem~0.1]{Wel4}; see \cite[Remark~2.12]{Wel15}.
The potential appearance of  maps as in the second diagram of Figure~\ref{c1B_fig}
is not even discussed in any geometric argument we are aware~of.
On the other hand, these maps create no difficulties in the virtual class approach
of \cite[Section~7]{Sol}.
\end{rmk}

\noindent
Section~\ref{TermNot_sec} sets up the relevant notation for the moduli spaces
of complex and real curves and for their covers.
Section~\ref{realRTdfn_subs} introduces a real version of the perturbations of~\cite{RT2} 
and concludes with the main theorem.
The strata $\fM_{\ga}(J,\nu)^{\phi}$ splitting the moduli space 
$\ov\fM_{g,l;k}(X,B;J,\nu)^{\phi}$ based on the combinatorial type of the map
are described in Section~\ref{RTstr_subs}.
As summarized in Section~\ref{RTstr_subs2}, the subspaces $\fM_{\ga}^*(J,\nu)^{\phi}$
of 
these strata consisting of simple maps are smooth manifolds.
We use the regularity statements of this section, 
Propositions~\ref{RTreg_prp} and~\ref{RTreg2_prp}, to establish the main
theorem in Section~\ref{RTrealpf_subs}.
The two propositions are proved
in Sections~\ref{DefObs_subs} and~\ref{RTregpf_subs}. 
The first of these sections introduces suitable deformation-obstruction settings and 
then shows that the deformations of real Ruan-Tian pairs~$(J,\nu)$ suffice to
cover the obstruction space in all relevant cases; see Lemmas~\ref{RTtransC_lmm}
and~\ref{RTtransR_lmm}.
By Section~\ref{RTregpf_subs}, Lemmas~\ref{RTtransC_lmm} and~\ref{RTtransR_lmm} 
ensure the smoothness 
of the universal moduli space of simple $(J,\nu)$-maps from a domain of each topological type;
see Theorem~\ref{UfM_prp}.
As is well-known, the latter implies the smoothness of the corresponding stratum of
the moduli space of simple $(J,\nu)$-maps for a generic pair~$(J,\nu)$ and thus concludes
the proof of Proposition~\ref{RTreg_prp}.
In the process of establishing Theorem~\ref{RTreal_thm}, 
we systematize and streamline the constructions of GW-pseudocycles 
in~\cite{MS,RT2}.\\

\noindent
The author would like to thank P.~Georgieva and J.~Starr for enlightening discussions on 
the Deligne-Mumford moduli of curves.

\section{Terminology and notation}
\label{TermNot_sec}

\noindent
Ruan-Tian's deformations~$\nu$ are obtained by passing to a \sf{regular} 
cover of the Deligne-Mumford space $\ov\cM_{g,l}$ of stable genus~$g$ complex curves
with $l$~marked points.
After recalling such covers in Section~\ref{CDM_subs},  
we describe their analogues suitable for real GW-theory.

\subsection{Moduli spaces of complex curves}
\label{CDM_subs}

\noindent
For $l\!\in\!\Z^{\ge0}$, let
$$[l]\equiv\big\{i\!\in\!\Z^+\!:i\le\!l\big\}.$$
For $g\!\in\!\Z^{\ge0}$, we denote by $\cD_g$ the group of diffeomorphisms 
of a smooth compact connected orientable genus~$g$ surface~$\Si$
and by $\cJ_g$ the space of complex structures on~$\Si$.
If in addition $l\!\in\!\Z^{\ge0}$ and $2g\!+\!l\!\ge\!3$, let 
$$\cM_{g,l}\subset \ov\cM_{g,l}$$
be the open subspace of smooth curves in the Deligne-Mumford moduli space of 
genus~$g$ complex curves with $l$~marked points and define
$$\cJ_{g,l}=\big\{(\fJ,z_1,\ldots,z_l)\!\in\!\cJ_g\!\times\!\Si^l\!:
z_i\!\neq\!z_j~\forall\,i\!\neq\!j\big\}.$$
The group~$\cD_g$ acts on~$\cJ_{g,l}$ by 
$$h\cdot\big(\fJ,z_1,\ldots,z_l\big)=\big(h^*\fJ,h^{-1}(z_1),\ldots,h^{-1}(z_l)\big)\,.$$

\vspace{.2in}

\noindent
Denote by $\sT_{g,l}$ the Teichm\"uller space of~$\Si$ with $l$ punctures and
by $\sG_{g,l}$ the corresponding mapping class group.
Thus,
\BE{cMglquot_e}\cM_{g,l}=\cJ_{g,l}\big/\cD_g=\sT_{g,l}\big/\sG_{g,l}\,.\EE
Let 
\BE{ffgldfn_e}\ff_{g,l}\!: \ov\cU_{g,l}\!=\!\ov\cM_{g,l+1}\lra \ov\cM_{g,l}\EE
be the forgetful morphism dropping the last marked point;
it determines the \sf{universal family} over~$\ov\cM_{g,l}$.\\

\noindent
For a tuple $\sD\!\equiv\!(g_1,S_1;g_2,S_2)$ consisting of $g_1,g_2\!\in\!\Z^{\ge0}$ 
with \hbox{$g\!=\!g_1\!+\!g_2$} and  $S_1,S_2\!\subset\![l]$ with 
\hbox{$[l]\!=\!S_1\!\sqcup\!S_2$}, denote~by
$$\ov\cM_{\sD}\subset\ov\cM_{g,l}$$
the closure of the subspace of marked curves with two irreducible components~$\Si_1$
and~$\Si_2$ of genera~$g_1$ and~$g_2$, respectively, and carrying the marked points indexed
by~$S_1$ and~$S_2$, respectively.
Let 
$$\io_{\sD}\!: \ov\cM_{g_1,|S_1|+1}\!\times\ov\cM_{g_2,|S_2|+1}
\lra \ov\cM_{g,l}$$
be the natural node-identifying immersion with image  $\ov\cM_{\sD}$
(it sends the first $|S_i|$ marked points
of the $i$-th factor to the marked points indexed by~$S_i$ in the order-preserving fashion).
We denote by $\Div_{g,l}$ the set of tuples~$\sD$ above.\\

\noindent
For each involution~$\si$ on the~set $[l]$,
define 
\begin{alignat*}{2}
\wt\si\!:\big[l\!+\!1\big]&\lra\big[l\!+\!1\big], &\qquad
\wt\si(i)&=\begin{cases}\si(i),&\hbox{if}~i\!\in\![l];\\
i,&\hbox{if}~i\!=\!l\!+\!1; \end{cases}\\
\si_g\!:\cJ_{g,l}&\lra\cJ_{g,l}, &\qquad
\si_g\big(\fJ,z_1,\ldots,z_l\big)&=\big(-\fJ,z_{\si(1)},\ldots,z_{\si(l)}\big).
\end{alignat*}
Since the last involution commutes with the action of~$\cD_g$, it descends to 
an involution on the quotient~\eref{cMglquot_e}.
The latter extends to an involution 
\BE{sigldfn_e}\si_g\!:\ov\cM_{g,l}\lra \ov\cM_{g,l} \qquad\hbox{s.t.}\quad
\si_g\!\circ\!\ff_{g,l}=\ff_{g,l}\!\circ\!\wt\si_g\,.\EE
A genus~$g$ complex curve~$\cC$ is cut out by polynomial equations in some~$\P^{N-1}$
($N$ can be taken to be the same for all elements of~$\ov\cM_{g,l}$).
The standard involution~$\tau_N$ on~$\P^{N-1}$ sends~$\cC$ to another genus~$g$ curve~$\ov\cC$.
If~$\cC$ is smooth, $\tau_N$ identifies~$\cC$ and~$\ov\cC$ as smooth surfaces reversing 
the complex structure.
The conjugation~$\tau_N$ thus induces the involution~\eref{sigldfn_e}.\\

\noindent
Since~$\sT_{g,l}$ is simply connected, the involution~$\si_g$ on~$\ov\cM_{g,l}$ 
lifts to a $\sG_{g,l}$-equivariant involution
\BE{RUClmm_e3}\si_g\!:\sT_{g,l}\lra\sT_{g,l}\,.\EE
Such a lift can be described as follows. 
Let $\Si_{g,l}$ be a smooth compact connected oriented genus~$g$ surface~$\Si$ 
with $l$~distinct marked points $z_1,\ldots,z_l$ and 
$\cD_{g,l}\!\subset\!\cD_g$ be the subgroup of diffeomorphisms of~$\Si_{g,l}$
isotopic to the identity (and preserving the marked points).
Choose an orientation-reversing involution~$\si_{g,l}$ on~$\Si_{g,l}$ 
that restricts to~$\si$ on the marked points.
An element of~$\sT_{g,l}$ is the \hbox{$\cD_{g,l}$-orbit~$[\fJ]$} of an element $\fJ\!\in\!\cJ_g$ 
compatible with the orientation of~$\Si_{g,l}$.
A lift as in~\eref{RUClmm_e3} can be obtained by defining
$$\si_g\!:\sT_{g,l}\lra\sT_{g,l}, \qquad [\fJ]\lra \big[-\si_{g,l}^{\,*}\fJ\big]\,.$$
This description is standard in the analytic perspective on the moduli spaces of curves;
see \cite[Section~2]{SepSil}, for example.

\begin{dfn}\label{RUC_dfn1}
Let $g,l\!\in\!\Z^{\ge0}$ with $2g\!+\!l\!\ge\!3$ and 
\BE{RUCdfn_e}p\!:\wt\cM_{g,l}\lra\ov\cM_{g,l}\EE
be a finite branched cover in the orbifold category.
A \sf{universal curve over $\wt\cM_{g,l}$} is a tuple
$$ \big(\pi\!:\wt\cU_{g,l}\!\lra\!\wt\cM_{g,l},s_1,\ldots,s_l\big),  $$
where $\wt\cU_{g,l}$ is a projective variety and $\pi$ is a projective morphism
with disjoint sections $s_1,\ldots,s_l$, such~that for each $\wt\cC\!\in\!\wt\cM_{g,l}$
the~tuple
$(\pi^{-1}(\wt\cC),s_1(\wt\cC),\ldots,s_l(\wt\cC))$
is a stable genus~$g$ curve with $l$~marked points whose equivalence class is~$p(\wt\cC)$.
\end{dfn} 

\begin{dfn}\label{RUC_dfn2}
Let $g,l\!\in\!\Z^{\ge0}$ with $2g\!+\!l\!\ge\!3$.
A cover~\eref{RUCdfn_e} is \sf{regular} if 
\begin{enumerate}[label=$\bu$,leftmargin=*]

\item it admits a universal curve,

\item  each topological component of $p^{-1}(\cM_{g,l})$
is the quotient of~$\sT_{g,l}$ by a 
 subgroup of~$\sG_{g,l}$, and  

\item for every element $\sD\!\equiv\!(g_1,S_1;g_2,S_2)$ of $\Div_{g,l}$,
$$\big(\ov\cM_{g_1,|S_1|+1}\!\times\ov\cM_{g_2,|S_2|+1}\big)
\times_{(\io_{\sD},p)}\wt\cM_{g,l}\approx 
\wt\cM_{g_1,|S_1|+1}\!\times\!\wt\cM_{g_2,|S_2|+1}$$
for some covers $\wt\cM_{g_i,|S_i|+1}$ of $\ov\cM_{g_i,|S_i|+1}$.

\end{enumerate}
\end{dfn} 

\vspace{.2in}

\noindent
The moduli space $\ov\cM_{0,l}$ is smooth and the universal family over it 
satisfies the requirement of Definition~\ref{RUC_dfn1}.
For $g\!\ge\!2$, \cite[Theorems~2.2,3.9]{BP} provide
covers~\eref{RUCdfn_e} satisfying 
the last two requirements of Definition~\ref{RUC_dfn2}
so that the orbifold fiber product
\BE{CwtcU_e} \pi\!:\wt\cU_{g,l}\!
\equiv\!\wt\cM_{g,l}\!\otimes_{\ov\cM_{g,l}}\!\ov\cU_{g,l}\lra\wt\cM_{g,l}\EE
satisfies the requirement of Definition~\ref{RUC_dfn1};
see also \cite[Section~2.2]{Ruan}.
The same reasoning applies in the $g\!=\!1$ case if \hbox{$l\!\ge\!1$}.

\begin{lmm}\label{RUC_lmm}
If \eref{RUCdfn_e} satisfies the second condition in Definition~\ref{RUC_dfn2}
and $\si$ is an involution on~$[l]$,
then the involutions~$\si_g$ on~$\ov\cM_{g,l}$ and~$\wt\si_g$ on~$\ov\cU_{g,l}$ 
lift to involutions
\BE{RUClmm_e}\si_g\!: \wt\cM_{g,l}\lra\wt\cM_{g,l}, ~~\wt\si_g\!:\wt\cU_{g,l}\lra \wt\cU_{g,l}
\qquad\hbox{s.t.}~~\si_g\!\circ\!\pi=\pi\!\circ\!\wt\si_g\,.\EE
\end{lmm}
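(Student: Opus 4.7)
My plan is to construct $\si_g$ on $\wt\cM_{g,l}$ first on the open dense subspace $p^{-1}(\cM_{g,l})$ using the Teichm\"uller presentation provided by the second condition of Definition~\ref{RUC_dfn2}, then extend by continuity across the boundary strata, and finally obtain $\wt\si_g$ on $\wt\cU_{g,l}$ from the fiber product description~\eref{CwtcU_e}. The essential input is the $\sG_{g,l}$-equivariant lift~\eref{RUClmm_e3} of $\si_g$ to $\sT_{g,l}$ constructed before the lemma: it intertwines the $\sG_{g,l}$-action on $\sT_{g,l}$ with a canonical involution $\si_g^{\#}$ of the group $\sG_{g,l}$ itself, so that $\si_g(h\!\cdot\!\fJ)\!=\!\si_g^{\#}(h)\!\cdot\!\si_g(\fJ)$ for all $h\!\in\!\sG_{g,l}$.

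Write \hbox{$p^{-1}(\cM_{g,l})\!=\!\bigsqcup_{\al}\sT_{g,l}/\Ga_{\al}$} with $\Ga_{\al}\!\subset\!\sG_{g,l}$ the subgroups from the second condition. The intertwining property then forces~\eref{RUClmm_e3} to descend to isomorphisms \hbox{$\sT_{g,l}/\Ga_{\al}\!\lra\!\sT_{g,l}/\si_g^{\#}(\Ga_{\al})$} covering $\si_g$ on $\cM_{g,l}$. Since $\si_g$ is an automorphism of $\ov\cM_{g,l}$ preserving $\cM_{g,l}$ and $p$ is a finite cover, the collection of conjugacy classes $\{[\Ga_{\al}]\}$ in $\sG_{g,l}$ is permuted by $\si_g^{\#}$; gluing the descended maps yields an involution of $p^{-1}(\cM_{g,l})$ covering $\si_g$ on $\cM_{g,l}$. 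Because $\wt\cM_{g,l}$ is a compact Hausdorff orbifold and $p^{-1}(\cM_{g,l})$ is open and dense, this involution extends uniquely by continuity to all of $\wt\cM_{g,l}$; the relations $\si_g^2\!=\!\id$ and $p\!\circ\!\si_g\!=\!\si_g\!\circ\!p$ pass to the extension. The second lift $\wt\si_g$ on $\wt\cU_{g,l}$ is then induced by the universal property of the orbifold fiber product~\eref{CwtcU_e}: the involutions on $\wt\cM_{g,l}$ and $\ov\cU_{g,l}$ both cover $\si_g$ on $\ov\cM_{g,l}$, so they determine $\wt\si_g$ uniquely and the intertwining $\si_g\!\circ\!\pi\!=\!\pi\!\circ\!\wt\si_g$ is built in.

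The step I expect to be delicate is the continuous extension to the boundary of $\wt\cM_{g,l}$. Since $p$ may ramify over the boundary divisor in $\ov\cM_{g,l}$, na\"ive compactness only forces a limit $\lim_{n}\si_g(x_n)$ to lie in the finite fiber $p^{-1}(\si_g(p(x)))$, so path-independence must still be checked. I would resolve this by passing to local orbifold charts for $\wt\cM_{g,l}$ near a boundary point, obtained as finite quotients of standard plumbing neighborhoods on $\ov\cM_{g,l}$ pulled back along~$p$; on these charts the involution~\eref{RUClmm_e3} acts explicitly (as complex conjugation twisted by the permutation $\si$ of marked points and the induced permutation of nodes), which pins down the continuous extension uniquely and confirms that it is itself an involution.
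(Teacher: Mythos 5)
Your overall strategy matches the paper's proof: descend the Teichm\"uller-level lift~\eref{RUClmm_e3} to $p^{-1}(\cM_{g,l})$, extend across the boundary by density of the interior in small neighborhoods, and use the fiber product~\eref{CwtcU_e} for the universal-curve part. The added detail on the intertwining automorphism $\si_g^{\#}$ of $\sG_{g,l}$ and on path-independence of the extension near the boundary of $\wt\cM_{g,l}$ reasonably expands the paper's terse two-sentence argument; the paper's own justification for the boundary extension is that small neighborhoods in the moduli space meet the interior in connected dense sets, which amounts to the same point.

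However, the claim that ``since $\si_g$ is an automorphism of $\ov\cM_{g,l}$ preserving $\cM_{g,l}$ and $p$ is a finite cover, the collection of conjugacy classes $\{[\Ga_{\al}]\}$ in $\sG_{g,l}$ is permuted by $\si_g^{\#}$'' does not follow from the stated hypotheses. An automorphism of the base of a finite cover lifts to the cover if and only if it preserves the monodromy up to conjugacy, and this is precisely the content of the assertion, not a formal consequence of finiteness plus the existence of an automorphism of $\ov\cM_{g,l}$. The paper asserts the descent of~\eref{RUClmm_e3} to $p^{-1}(\cM_{g,l})$ directly as a consequence of the second condition of Definition~\ref{RUC_dfn2}; in practice the $\si_g^{\#}$-invariance holds for the covers of~\cite{BP} that are actually used, but it is a property of those covers to be verified, not a deduction from $p$ being finite. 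You should either check this invariance explicitly for the covers in question or flag it as the substantive input, rather than presenting the permutation of conjugacy classes as an automatic consequence.
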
 

\begin{proof}
If~\eref{RUCdfn_e} satisfies the second condition in Definition~\ref{RUC_dfn2},
then the involution~\eref{RUClmm_e3} descends to an involution on~$p^{-1}(\cM_{g,l})$.
Since every point $[\cC]\!\in\!\ov\cM_{g,l}$ has an arbitrary small neighborhood~$U_{\cC}$
such that $U_{\cC}\!\cap\!\cM_{g,l}$ is connected and dense in~$U_{\cC}$,
the last involution extends to an involution~$\si_g$ as in~\eref{RUClmm_e}. 
By the identity in~\eref{sigldfn_e}, the involution~$\wt\si_g$ on~$\ov\cU_{g,l}$ 
lifts to an involution~$\wt\si_g$ as in~\eref{RUClmm_e} over the projection 
$\wt\cU_{g,l}\!\lra\!\ov\cU_{g,l}$
so that the identity in~\eref{RUClmm_e} holds.
\end{proof}

\subsection{Moduli spaces of real curves}
\label{RDM_subs}
 
\noindent
A \sf{symmetric surface} $(\Si,\si)$ is a nodal compact connected orientable surface~$\Si$ 
(manifold of real dimension~2 with distinct pairs of points identified) 
with an orientation-reversing involution~$\si$.
If $\Si$ is smooth, then the fixed locus~$\Si^{\si}$ of~$\si$ is a disjoint union of circles.
There are $\flr{\frac{3g+4}{2}}$ different topological types of orientation-reversing 
involutions~$\si$ on a smooth surface~$\Si$; see \cite[Corollary~1.1]{Nat}.
We denote the set of these types by~$\sI_g^-$.\\

\noindent
For an orientation-reversing involution~$\si$ on a smooth compact connected 
orientable genus~$g$ surface~$\Si$, let
$$\cD_g^{\si}=\big\{h\!\in\!\cD_g\!:h\!\circ\!\si\!=\!\si\!\circ\!h\big\}, \qquad
\cJ_g^{\si}=\big\{\fJ\!\in\!\cJ_g\!:\si^*\fJ\!=\!-\fJ\big\}.$$
If in addition $l,k\!\in\!\Z^{\ge0}$, define
$$\cJ_{g,l;k}^{\si}=\big\{\big(\fJ,
(z_i^+,z_i^-)_{i\in[l]},(z_i)_{i\in[k]}\big)\!\in\!\cJ_{g,2l+k}\!:
\fJ\!\in\!\cJ_g^{\si},\,\si(z_i^{\pm})\!=\!z_i^{\mp}~\forall i\!\in\![l],\,
\si(z_i)\!=\!z_i~\forall i\!\in\![k]\big\}.$$
An element of $\cJ_{g,l;k}^{\si}$ is a smooth \sf{real curve} 
of genus~$g$ with  $l$~conjugate pairs of marked points and 
$k$~real marked points.\\

\noindent
The action of $\cD_g$ on $\cJ_{g,2l+k}$ restricts to an action of $\cD_g^{\si}$ on~$\cJ_{g,l;k}^{\si}$.
Let
$$\cM_{g,l;k}^{\si}\equiv\cJ_{g,l;k}^{\si}\big/\cD_g^{\si}\,.$$
If $2(g\!+\!l)\!+\!k\!\ge\!3$, the Deligne-Mumford moduli space $\R\ov\cM_{g,l;k}$ 
of real genus~$g$ curves with $l$~conjugate pairs of marked points and 
$k$~real marked points is a compactification~of 
$$\R\cM_{g,l;k}\equiv\bigsqcup_{\si\in\sI_g^-}\!\!\!\cM_{g,l;k}^{\si}$$
with strata of equivalence classes of stable nodal real curves
of genus~$g$ with  $l$~conjugate pairs of marked points and $k$~real marked points.
This moduli space is topologized via versal deformations of real curves
as described in \cite[Section~3.2]{Melissa}.\\

\noindent
Fix $g,l\!\in\!\Z^{\ge0}$ and define
$$\si\!:\big[2l\!+\!k\big]\lra \big[2l\!+\!k\big], \qquad
\si(i)=\begin{cases}i\!+\!1,&\hbox{if}~i\!\le\!2l,\,i\!\not\in\!2\Z;\\
i\!-\!1,&\hbox{if}~i\!\le\!2l,\,i\!\in\!2\Z;\\
i,&\hbox{if}~i\!>\!2l.
\end{cases}$$
There is a natural morphism
\BE{RtoCmap_e} \R\ov\cM_{g,l;k} \lra \ov\cM_{g,2l+k}\,.\EE
A genus~$g$ symmetric surface $(\Si,\si_{\Si})$ is cut out by real polynomial equations 
in some~$\P^{N-1}$ so that $\si_{\Si}\!=\!\tau_N|_{\Si}$.
The morphism~\eref{RtoCmap_e} sends the equivalence class of $(\Si,\si_{\Si})$
to the equivalence class of~$\Si$.
The image of~\eref{RtoCmap_e} is contained in the fixed locus $\ov\cM_{g,2l+k}^{\,\si_g}$
of the involution~$\si_g$.
For $g\!=\!0$, \eref{RtoCmap_e} is an isomorphism onto~$\ov\cM_{g,2l+k}^{\,\si_g}$.
In general, \eref{RtoCmap_e} is neither injective nor surjective onto~$\ov\cM_{g,2l+k}^{\,\si_g}$;
see \cite[Section~6.2]{Sep98}.\\

\noindent
Let $p$ be as in~\eref{RUCdfn_e} with $l$ replaced by $2l\!+\!k$.
Define
\begin{alignat}{1}
\label{RRUCdfn_e}
p_{\R}\!: \R\wt\cM_{g,l;k}\!\equiv\!
\R\ov\cM_{g,l;k}\!\times_{\ov\cM_{g,2l+k}}\!\wt\cM_{g,2l+k}
\lra\R\ov\cM_{g,l;k},\\
\label{RwtcU_e} 
\pi_{\R}\!:\R\wt\cU_{g,l;k}\!\equiv\!
\R\wt\cM_{g,l;k}\!\times_{\wt\cM_{g,2l+k}}\!\wt\cU_{g,2l+k}
\lra\R\wt\cM_{g,l;k}  
\end{alignat}
be the orbifold fiber products of the morphisms~\eref{RtoCmap_e} and~$p$
and of the projection to the second component in~\eref{RRUCdfn_e} and~\eref{CwtcU_e}, 
respectively.
Suppose in addition that \eref{RUCdfn_e}
satisfies the second condition in Definition~\ref{RUC_dfn2}.
Since the image of~\eref{RtoCmap_e} is  contained in $\ov\cM_{g,2l+k}^{\,\si_g}$,
an involution~$\wt\si_g$ on $\wt\cU_{g,2l+k}$ provided by Lemma~\ref{RUC_lmm}
then lifts to an involution 
\BE{wtsiRdfn_e}\wt\si_{\R}\!: \R\wt\cU_{g,l;k}\lra \R\wt\cU_{g,l;k}\EE
which preserves the fibers of~$\pi_{\R}$.

\section{Real Ruan-Tian pseudocycles}
\label{RTpseudo_sec}

\noindent
Building on the approach in \cite[Section~2.1]{Ge2} from the $g\!=\!0$ real setting case,
we introduce a real analogue of the geometric perturbations of~\cite{RT2} 
in Section~\ref{realRTdfn_subs}.
Theorem~\ref{RTreal_thm} provides an interpretation of the  arbitrary-genus real GW-invariants of
\cite[Theorem~1.4]{RealGWsI} for semi-positive targets in the style of~\cite{RT2}.
A similar interpretation of the  genus~1 real GW-invariants of \cite[Theorem~1.5]{RealGWsI}
is obtained by combining its proof with the portions of the proof of Theorem~\ref{RTreal_thm}
not specific to the $k\!=\!0$ case; see Remark~\ref{RealRT3_rmk}.\\

\noindent
The covers~\eref{RUCdfn_e} of the Deligne-Mumford moduli spaces of curves
provided by~\cite{BP} are branched over the boundaries of the moduli spaces.
The total spaces of the universal curves~\eref{CwtcU_e} over these covers
thus have singularities around the nodal points of the fibers of the~from
$$\big\{(t,x,y)\!\in\C^3\!:xy\!=\!t^m\big\}\lra \C, \qquad (t,x,y)\lra t;$$
see the proof of \cite[Proposition~1.4]{BP}.
The approach of \cite[Section~2]{RT2} to deal with these singularities is to
embed the universal curve~\eref{CwtcU_e} into some~$\P^N$.
Standard, though delicate, algebro-geometric arguments provide
an embedding of the real universal curve~\eref{RwtcU_e} into~$\P^N$
suitable for carrying out the approach of~\cite{RT2} in the relevant real settings.
Following~\cite{LT}, we bypass such an embedding by using perturbations supported
away from the nodes.\\

\noindent
For a symplectic manifold $(X,\om)$, we denote~by $\cJ_{\om}$
the space of $\om$-compatible almost complex structures on~$X$.
If $(X,\om,\phi)$ is a real symplectic manifold, let 
\BE{cJomdfn_e}\cJ_{\om}^{\phi}=\big\{J\!\in\!\cJ_{\om}\!:\,\phi^*J\!=\!-J\big\}.\EE
For an almost complex structure~$J$ on a smooth manifold~$X$,
a complex structure~$\fJ$ on a nodal surface~$\Si$,
and a smooth map \hbox{$u\!:\Si\!\lra\!X$},  let 
$$\dbar_{J,\fJ}u=\frac{1}{2}\big(\tnd u+J\circ\tnd u\!\circ\!\fJ\big)\!:
(T\Si,-\fJ)\lra u^*(TX,J)\,.$$
Such a map is called \sf{$J$-holomorphic} if $\dbar_{J,\fJ}u\!=\!0$.
If $\Si$ is a smooth connected orientable surface, a \hbox{$C^1$-map}
\hbox{$u\!:\Si\!\lra\!X$}~is 
\begin{enumerate}[label=$\bu$,leftmargin=*]

\item \sf{somewhere injective} if there exists $z\!\in\!\Si$
such that $u^{-1}(u(z))\!=\!\{z\}$ and $\tnd_uz\!\neq\!0$,

\item \sf{multiply covered} if $u\!=\!u'\!\circ\!h$ 
for some smooth connected orientable surface~$\Si'$,
branched cover \hbox{$h\!:\!\Si\!\lra\!\Si'$} of degree different from $\pm1$,
and a smooth map $u'\!:\Si\!\lra\!\Si'$,

\item \sf{simple} if it is not multiply covered. 

\end{enumerate}
By \cite[Proposition~2.5.1]{MS}, a somewhere injective $J$-holomorphic map is simple. 
For an involution~$\phi$ on~$X$ and an involution~$\si$ on~$\Si$,
a \sf{real~map} 
$$u\!:(\Si,\si)\lra(X,\phi)$$ 
is a map $u\!:\Si\!\lra\!X$ such that $u\!\circ\!\si=\phi\!\circ\!u$.\\

\noindent
The pseudocycle constructions in \cite{MS,RT2} are based on showing that 
\begin{enumerate}[label=(\arabic*),leftmargin=*]

\item\label{regul_it} the open subspace $\fM_{\ga}^*(J)$ of each stratum $\fM_{\ga}(J)$
of the moduli space of $J$-holomorphic maps to~$X$ consisting of \sf{simple}
maps in the sense of Definition~\ref{Jnumap_dfn2} is smooth for 
a generic choice of \hbox{$J\!\in\!\cJ_{\om}$}
(of a compatible pair $(J,\nu)$ in~\cite{RT2}),

\item\label{semipos_it} 
the image of $\fM_{\ga}(J)\!-\!\fM_{\ga}^*(J)$ under the natural evaluation map~$\ev$
(and the stabilization map~$\st$ in~\cite{RT2}) is covered by smooth maps~$\ev$
(and $\st_{\ga',\vp}$)  
from some other smooth spaces  $\cZ_{\ga',\vp}^*(J)$ of 
dimension at least~2 less than the dimension of the top stratum of the moduli space.

\end{enumerate}
Our proof of Theorem~\ref{RTreal_thm} provides a systematic perspective 
on the reasoning in \cite{MS,RT2}.
We specify all spaces and maps relevant to~\ref{semipos_it} above.
The regularity of these spaces, which include the spaces in~\ref{regul_it}
as special cases, is the subject of Propositions~\ref{RTreg_prp} and~\ref{RTreg2_prp};
they are proved in Section~\ref{trans_sec}.

\subsection{Main statement}
\label{realRTdfn_subs}

\noindent
Let $g,l,k\!\in\!\Z^{\ge0}$ with $2(g\!+\!l)\!+\!k\!\ge\!3$ and \eref{RUCdfn_e} 
be a regular cover.
This implies that the fibers of~\eref{RwtcU_e} are stable genus~$g$ 
real curves with $l$~conjugate pairs of  marked points and
$k$~real marked points.
We denote~by
$$\R\wt\cU_{g,l;k}^{\,*}\subset\R\wt\cU_{g,l;k}$$
the complement of the nodes of the fibers of~$\pi_{\R}$ and~by
$$\cT_{g,l;k}=\ker\tnd\big(\pi_{\R}|_{\R\wt\cU_{g,l;k}^{\,*}}\big)\lra \R\wt\cU_{g,l;k}^{\,*}$$
the vertical tangent bundle.
The latter is a complex line bundle; let $\fJ_{\cU}$ denote its complex structure.
The action of the differential of~\eref{wtsiRdfn_e} reverses~$\fJ_{\cU}$.\\

\noindent
Let $(X,J,\phi)$ be an almost complex manifold with an involution~$\phi$
reversing the almost complex structure~$J$.
Denote~by
$$\pi_1,\pi_2\!: \R\wt\cU_{g,l;k}^{\,*}\!\times\!X\lra \R\wt\cU_{g,l;k}^{\,*},X$$
the projection maps.
Let
$$\Ga^{0,1}_{g,l;k}(X;J)=
\big\{\nu\!\in\!\Ga\big(\R\wt\cU_{g,l;k}^{\,*}\!\times\!X;
\pi_1^*(\cT_{g,l;k},-\fJ_{\cU})^*\!\otimes_{\C}\!\pi_2^*(TX,J)\big)\!\!:
\supp(\nu)\subset\R\wt\cU_{g,l;k}^*\!\times\!X\big\},$$
where $\supp(\nu)$ is the closure of the set
$$\big\{(z,x)\!\in\!\R\wt\cU_{g,l;k}^{\,*}\!\times\!X\!:\nu(z,x)\!\neq\!0\big\}$$
in~$\R\wt\cU_{g,l;k}\!\times\!X$.
Define
\begin{alignat}{1}
\label{Ga01dfn_e}
\Ga^{0,1}_{g,l;k}(X;J)^{\phi}&=
\big\{\nu\!\in\!\Ga^{0,1}_{g,l;k}(X;J)\!:
\tnd\phi\!\circ\!\nu\!=\!\nu\!\circ\!\tnd\wt\si_{\R}\big\},\\
\label{cHdfn_e}
\cH_{g,l;k}^{\om,\phi}(X)&=\big\{(J,\nu)\!:J\!\in\cJ_{\om}^{\phi},\,
\nu\!\in\!\Ga^{0,1}_{g,l;k}(X;J)^{\phi}\big\}.
\end{alignat}

\begin{dfn}\label{Jnumap_dfn}
Suppose $g,l,k\!\in\!\Z^{\ge0}$ with $2(g\!+\!l)\!+\!k\!\ge\!3$,
 $(X,J,\phi)$ is an almost complex manifold with an involution~$\phi$
reversing~$J$, 
$\nu\!\in\!\Ga^{0,1}_{g,l;k}(X;J)^{\phi}$, and $B\!\in\!H_2(X;\Z)$.
A \sf{degree~$B$ genus~$g$ real $(l,k)$-marked $(J,\nu)$-map} is a tuple
\BE{Jnumap_e} \u\equiv \big(u_{\cM}\!:\Si\!\lra\!\R\wt\cU_{g,l;k},
u\!:\Si\!\lra\!X,(z_i^+,z_i^-)_{i\in[l]},(x_i)_{i\in[k]},\si,\fJ\big),\EE
where $\big(\Si,(z_i^+,z_i^-)_{i\in[l]},(x_i)_{i\in[k]},\si,\fJ\big)$
is a nodal real genus~$g$ curve with $l$~conjugate pairs of  points and 
$k$~real points,
$u_{\cM}$ is a $(\wt\si_{\R},\si)$-real $(\fJ_{\cU},\fJ)$-holomorphic map 
onto a fiber of~$\pi_{\R}$ preserving the marked points,
and $u$ is a $(\phi,\si)$-real map such~that  
$$\dbar_{J,\fJ}u\big|_z=\nu\big(u_{\cM}(z),u(z)\big)\!\circ\!\tnd_zu_{\cM}\in 
\big(T_z\Si,-\fJ\big)^*\!\otimes_{\C}\!(T_{u(z)}X,J) ~~\forall\,z\!\in\!\Si,
\quad u_*[\Si]=B\in H_2(X;\Z).$$
\end{dfn}

\begin{dfn}\label{Jnumap_dfn2}
Suppose $g,l,k$, $(X,J,\phi)$, $\nu$, and $B$ are as in 
Definition~\ref{Jnumap_dfn}.
A \hbox{$(J,\nu)$-map~$\u$} as in~\eref{Jnumap_e} is \sf{simple} if 
the restriction of~$u$ to each irreducible component~$\Si'$ of~$\Si$ contracted by~$u_{\cM}$ 
is simple whenever $u|_{\Si'}$ is not constant and
the images of any two such components~$\Si'$ under~$u$ are~distinct.
\end{dfn}

\noindent
Following the standard terminology, we call  
the irreducible components contracted by~$u_{\cM}$ 
the \sf{contracted components of}~\eref{Jnumap_e}. 
Every such component~$\Si'$  is smooth and of genus~0;
the restriction of~$u$ to~$\Si'$ is $J$-holomorphic.
The last condition in Definition~\ref{Jnumap_dfn2} implies that the image curve 
$u(\Si')\!\subset\!X$ is not real if $\u$ is a simple map, 
$u|_{\Si'}$ is not constant, and $\Si'\!\subset\!\Si$ is not a real component.
In particular, the maps represented by the second diagram in Figure~\ref{c1B_fig}
are not simple.\\

\noindent
A $(J,\nu)$-map $\u$ as in~\eref{Jnumap_e} is \sf{equivalent} to another $(J,\nu)$-map
$$ \u\equiv \big(u_{\cM}'\!:\Si'\!\lra\!\R\wt\cU_{g,l;k},
u'\!:\Si'\!\lra\!X,(z_i'^+,z_i'^-)_{i\in[l]},(x_i')_{i\in[k]},\si',\fJ'\big)$$
if there exists a biholomorphic map $h\!:(\Si,\fJ)\!\lra\!(\Si',\fJ')$ such that 
$$h\!\circ\!\si=\si'\!\circ\!h, \quad h(z_i^+)=z_i'^+~~\forall i\!\in\![l], \quad
h(x_i)=x_i'~~\forall i\!\in\![k], \quad
(u_{\cM},u)=\big(u_{\cM}'\!\circ\!h,u'\!\circ\!h\big).$$
A $(J,\nu)$-map $\u$ is \sf{stable} if its group of automorphisms
is finite.
This is the case if and only if the degree of the restriction of~$u$ to 
every contracted component~$\Si'$ of~$\Si$ containing only one or two 
special (nodal or marked) points is not zero.\\ 

\noindent
For $(J,\nu)\!\in\!\cH_{g,l;k}^{\om,\phi}(X)$,
we denote by $\ov\fM_{g,l;k}(X,B;J,\nu)^{\phi}$ the moduli space of equivalences classes 
of stable degree~$B$ genus~$g$ real $(l,k)$-marked $(J,\nu)$-maps.
It is topologized as in \cite[Section~3]{LT} using maps from 
families of real curves described in \cite[Section~3]{Melissa}.
The~map
\BE{evst_e}\begin{split}
\st\!\times\!\ev\!: \ov\fM_{g,l;k}(X,B;J,\nu)^{\phi}&\lra 
\R\ov\cM_{g,l;k}\times \big(X^l\!\times\!(X^{\phi})^k\big),\\
\big[u_{\cM},u,(z_i^+,z_i^-)_{i\in[l]},(x_i)_{i\in[k]},\si,\fJ\big]&\lra
\big(p_{\R}(\pi_{\R}(\Im\,u_{\cM})),(u(z_i^+))_{i\in[l]},(u(x_i))_{i\in[k]}\big),
\end{split}\EE
is continuous with respect to this topology.
Let 
\BE{fMstar_e}\fM_{g,l;k}^{\star}(X,B;J,\nu)^{\phi}\subset\ov\fM_{g,l;k}(X,B;J,\nu)^{\phi}\EE
be the subspace of simple maps from domains with at most one node.

\begin{thm}\label{RTreal_thm}
Suppose $n\!\not\in\!2\Z$,
$(X,\om,\phi)$ is a compact semi-positive real symplectic \hbox{$2n$-manifold}
endowed with a real orientation, 
$g,l\!\in\!\Z^{\ge0}$ with $g\!+\!l\!\ge\!2$, and $B\!\in\!H_2(X;\Z)$.
\begin{enumerate}[label=(\arabic*),leftmargin=*]

\item\label{RTexist_it} There exists a Baire subset 
$\wh\cH_{g,l}^{\om,\phi}(X)\!\subset\!\cH_{g,l;0}^{\om,\phi}(X)$
of second category such~that  the restriction
\BE{RTreal_e} \st\!\times\!\ev\!:\fM_{g,l;0}^{\star}(X,B;J,\nu)^{\phi} \lra 
\R\ov\cM_{g,l}\!\times\!X^l\EE
is a pseudocycle of dimension 
\BE{RTreal_e2}\dim_{\R}\fM_{g,l;0}^{\star}(X,B;J,\nu)^{\phi}=
\blr{c_1(TX),B}\!+\!(n\!-\!3)(1\!-\!g)\!+\!2l\EE
for every $(J,\nu)\!\in\!\wh\cH_{g,l}^{\om,\phi}(X)$.

\item\label{RTindep_it} The homology class on $\R\ov\cM_{g,l}\!\times\!X^l$
determined by this pseudocycle is independent of the choice of 
$(J,\nu)$ in~$\wh\cH_{g,l}^{\om,\phi}(X)$.
The~class
\BE{RTclass_e}
\frac{1}{\tn{deg}\,p}\Big[\st\!\times\!\ev\!:\fM_{g,l;0}^{\star}(X,B;J,\nu)^{\phi} \lra 
\R\ov\cM_{g,l}\!\times\!X^l\Big]\in 
H_*\Big(\R\ov\cM_{g,l}\!\times\!X^l;\Q\Big)\EE
is also independent of the choice of a regular cover~\eref{RUCdfn_e}.
\end{enumerate}
\end{thm}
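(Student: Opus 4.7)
The plan is to combine the transversality statement of Proposition~\ref{RTreg_prp} with the dimension bounds of Propositions~\ref{RTdim_prp} and~\ref{RTdim2_prp} to produce the pseudocycle and its cobordism invariance, appealing to \cite[Theorem~1.3]{RealGWsI} for the orientation.

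For part~\ref{RTexist_it}, I would first define $\wh\cH_{g,l}^{\om,\phi}(X)$ as the intersection, over the countably many combinatorial types $\ga$ and~$\ga'$ with $\om(B_{\ga'})\!\le\!\om(B)$, of the second-category loci of regular pairs provided by Proposition~\ref{RTreg_prp} for $\fM_\ga^*(J,\nu)^\phi$ and by the analogous regularity for all lower-energy simple strata entering the covers supplied by Proposition~\ref{RTdim_prp}. For $(J,\nu)$ in this Baire intersection, each $\fM_\ga^*(J,\nu)^{\phi}$ is a smooth manifold, and the assumption $n\!\not\in\!2\Z$ together with the real orientation orients it via \cite[Theorem~1.3]{RealGWsI}; the Fredholm index gives~\eref{RTreal_e2}. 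The pseudocycle boundary of $\st\!\times\!\ev$ on $\fM_{g,l;0}^\star(X,B;J,\nu)^\phi$ lies in the union of the images of the strata $\fM_\ga(J,\nu)^\phi$ indexed by combinatorial types~$\ga$ with either at least two nodes in the domain, or with a non-simple irreducible component. Strata of the first kind have image of real codimension at least~$2$ in $\R\ov\cM_{g,l}\!\times\!X^l$ by standard gluing combined with the description of versal families in \cite[Section~3]{Melissa}. For the second kind, Proposition~\ref{RTdim_prp} supplies finitely many smooth maps from simple-map moduli spaces $\fM_{\ga'}^*(J,\nu')^\phi$ of dimension at least~$2$ less than~\eref{RTreal_e2} whose images cover the $\st\!\times\!\ev$-image of the non-simple locus; semi-positivity of $(X,\om,\phi)$ is precisely what makes these dimensions drop, with the $n\!=\!2$ refinement of Definition~\ref{SSP_dfn} ruling out the configurations of Figure~\ref{c1B_fig}. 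Together with the openness of $\fM_{g,l;0}^\star$ inside $\ov\fM_{g,l;0}(X,B;J,\nu)^\phi$ at the once-nodal simple maps, this yields the pseudocycle property.

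For part~\ref{RTindep_it}, given $(J_0,\nu_0),(J_1,\nu_1)\!\in\!\wh\cH_{g,l}^{\om,\phi}(X)$, I would connect them by a smooth path $\al$ in $\cH_{g,l;0}^{\om,\phi}(X)$ chosen generically so that the one-parameter analogue of Proposition~\ref{RTreg_prp} makes the total space of simple $(J_t,\nu_t)$-maps a smooth oriented cobordism between the $t\!=\!0$ and $t\!=\!1$ loci, and so that Proposition~\ref{RTdim2_prp} bounds the image of the non-simple locus along~$\al$ by smooth maps from spaces of codimension at least~$2$; the strengthened $2\!-\!n$ bound of Definition~\ref{SSP_dfn} is essential here, since it is only in the one-parameter version that the borderline configurations of Figure~\ref{c1B_fig} first appear. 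This produces a pseudocycle cobordism in $\R\ov\cM_{g,l}\!\times\!X^l$ and hence the equality of homology classes. Independence of the regular cover follows by a standard dominating-cover argument: two regular covers $\wt\cM_{g,2l}$ and $\wt\cM_{g,2l}'$ of $\ov\cM_{g,2l}$ are both refined by (the normalized irreducible components of) their fiber product, the universal curve~\eref{RwtcU_e} and the involution~\eref{wtsiRdfn_e} pull back functorially, and the moduli space of $(J,\nu)$-maps over the refinement is a finite cover of each original whose degree is exactly compensated by the $1/\tn{deg}\,p$ factor in~\eref{RTclass_e}.

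The hardest step is establishing the codimension-$2$ estimate for non-simple strata in the real surface case $n\!=\!2$: the Deligne-Mumford moduli $\R\ov\cM_{g,l}$ has genuine real-codimension-one boundary along real-node degenerations, so a drop $\lr{c_1(X),B_\ga'}\!=\!0$ could a~priori propagate along a generic path. The $n\!=\!2$ refinements of Definition~\ref{SSP_dfn} together with Proposition~\ref{RTdim2_prp} are exactly what exclude the two configurations of Figure~\ref{c1B_fig}, and it is the systematic handling of this issue---anticipated but not fully addressed in the geometric approaches of \cite{Wel4,Wel15} as recalled in Remark~\ref{SSP_rmk}---that constitutes the technical core of the construction.
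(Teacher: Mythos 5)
Your proposal follows essentially the same approach as the paper: Proposition~\ref{RTreg_prp} for regularity, \cite[Theorem~1.3]{RealGWsI} (via Corollaries~\ref{RTorient_crl} and~\ref{RTorient2_crl}) for the orientation, Propositions~\ref{RTdim_prp} and~\ref{RTdim2_prp} for the codimension-$2$ bounds on the non-simple/deeper strata, and a fiber-product-of-covers argument for the independence of~\eref{RUCdfn_e}. Two small imprecisions worth flagging: the codimension-$2$ estimate for strata with $|\ga|\!\ge\!2$ comes directly from the expected-dimension count in Proposition~\ref{RTreg_prp}\ref{cZstr_it} (the $-|\ga|$ term in~\eref{RTprp_e}), not from gluing or versal families; and in your discussion of part~\ref{RTexist_it} you invoke the $n\!=\!2$ refinement of Definition~\ref{SSP_dfn} to exclude Figure~\ref{c1B_fig}, whereas—as you correctly note in your final paragraph—those configurations only become relevant for the one-parameter families in part~\ref{RTindep_it}; for part~\ref{RTexist_it} alone the weaker $3\!-\!n$ bound would suffice.
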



\noindent
The same conclusions apply with the $\R\ov\cM_{g,l}$ factor dropped everywhere.
Identical notions of pseudocycle with target in a manifold~$M$  
appear in \cite[Section~1.1]{pseudo} and in \cite[Definition~6.5.1]{MS}.
They readily extend to orbifold targets~$M$ and include 
the more elaborate and less convenient notion of pseudo-manifold
of \cite[Definition~4.1]{RT2}.
By \cite[Theorem~1.1]{pseudo}, the group of pseudocycles into a manifold~$M$ 
modulo equivalence is naturally isomorphic to~$H_*(M;\Z)$.
The same reasoning applies to orbifold targets.\\

\noindent
The proof of Theorem~\ref{RTreal_thm} in the rest of the paper follows the same general principles 
as the proofs of \cite[Theorem~6.6.1]{MS} and \cite[Propositions~2.3,2.5]{RT2}.
However, their implementation in the real case requires more care.
For example, the proof of the crucial transversality statements
Propositions~\ref{RTreg_prp} and~\ref{RTreg2_prp}
requires classifying the irreducible components 
of the domains of the elements of each stratum of the moduli space into five types, 
instead of~one in~\cite{MS} and~two in~\cite{RT2}, 
based on whether they are contracted or not and whether they are real or~not.

\begin{rmk}\label{RealRT3_rmk}
The only steps in the proof of Theorem~\ref{RTreal_thm} dependent on the $k\!=\!0$ assumption
are Corollaries~\ref{RTorient_crl} and~\ref{RTorient2_crl}.
A geometric interpretation of the genus~1 real GW-invariants of 
\cite[Theorem~1.5]{RealGWsI} is obtained by combining its proof with the remaining steps
in the proof of Theorem~\ref{RTreal_thm}.
\end{rmk}

\subsection{Strata of stable real maps}
\label{RTstr_subs}

\noindent
The moduli spaces of real curves and maps are stratified based on
the topological type of the domain and the distribution of
the map degree between its irreducible components in the latter case.
These data correspond to certain decorated graphs.
Because of the contraction operations on these graphs
that are central to our proof of Theorem~\ref{RTreal_thm},
we define such graphs based on the perspective in \cite[Section~2.1.1]{Ceyhan}.\\

\noindent
For $l,k\!\in\!\Z^{\ge0}$, define
$$\si_{l;k}\!:S_{l;k}\!\equiv\!\{i^+\!:i\!\in\![l]\big\}\!\sqcup\!
\{i^-\!:i\!\in\![l]\big\}\!\sqcup\!\{i\!:i\!\in\![k]\big\}\lra S_{l;k},
\quad \si_{l;k}(f)=\begin{cases}i^{\mp},&\hbox{if}~f\!=\!i^{\pm},i\!\in\![l];\\
f,&\hbox{if}~f\!\in\![k].
\end{cases}$$
An \sf{$(l,k)$-marked graph} is a tuple
\BE{stgadfn_e} \ov\ga\equiv 
\big(\ov\fg\!:\!\ov\Ver\!\lra\!\Z^{\ge0},
\ov\ve\!:S_{l;k}\!\sqcup\!\ov\Fl\!\lra\!\ov\Ver,\ov\vt\!:\ov\Fl\!\lra\!\ov\Fl,
\ov\si\!:\ov\Ver\!\sqcup\!\ov\Fl\!\lra\!\ov\Ver\!\sqcup\!\ov\Fl\big),\EE
where $\ov\Ver$ and $\ov\Fl$ are finite sets (of \sf{vertices} and \sf{flags}, respectively)
and $\ov\vt$ and $\ov\si$ are involutions such~that 
$$\ov\vt(v)\neq v~~\forall\,v\!\in\!\ov\Fl, \quad
\ov\fg\!\circ\!\ov\si|_{\ov\Ver}=\ov\fg, \quad
\ov\ve\!\circ\!\big\{\si_{l;k}\!\sqcup\!\ov\si|_{\ov\Fl}\big\}
=\ov\si|_{\ov\Ver}\!\circ\!\ov\ve, \quad 
\ov\vt\!\circ\!\ov\si|_{\ov\Fl}=\ov\si|_{\ov\Fl}\!\circ\!\ov\vt\,.$$
For $f\!\in\!S_{l;k}$, let $\ov\si(f)\!=\!\si_{l;k}(f)$.
We denote by $\Aut(\ov\ga)$ the group of automorphisms of~$\ov\ga$,
i.e.~pairs of automorphisms of the sets~$\ov\Ver$ and~$\ov\Fl$ commuting
with the maps $\ov\fg$, $\ov\ve$, and~$\ov\si$.
Define
\BE{nVRCdfn_e}\nV_{\R}(\ov\ga)=\big\{v\!\in\!\ov\Ver\!:\ov\si(v)\!=\!v\big\}, \qquad
\nV_{\C}(\ov\ga)=\big\{v\!\in\!\Ver\!:\ov\si(v)\!\neq\!v\big\}\,.\EE

\vspace{.2in}

\noindent
The set of \sf{edges} of $\ov\ga$ as in~\eref{stgadfn_e} is
$$\E(\ov\ga)\equiv\big\{e\!=\!\{f,\ov\vt(f)\}\!:f\!\in\!\ov\Fl\big\}.$$
The involution~$\ov\si|_{\ov\Fl}$ induces an involution on $\E(\ov\ga)$,
which we also denote by~$\ov\si$.
The graph~$\ov\ga$ is \sf{connected} if for all $v,v'\!\in\!\ov\Ver$ distinct 
there exist
\begin{gather*}
m\!\in\!\Z^+,~f_1^-,f_1^+,\ldots,f_m^-,f_m^+\!\in\!\ov\Fl \qquad\hbox{s.t.}\\
\ov\ve\big(f_1^-\big)\!=\!v,~\ov\ve\big(f_m^+\big)\!=\!v',~
\ov\ve\big(f_i^+\big)\!=\!\ov\ve\big(f_{i+1}^-\big)~\forall\,i\!\in\![m\!-\!1],~
\big\{f_i^-,f_i^+\big\}\!\in\!\E(\ov\ga)~\forall\,i\!\in\![m].
\end{gather*}
Define
\BE{Etypdfn_e}\begin{split}
\E_{\C}(\ov\ga)&\equiv\big\{e\!\in\!\E(\ov\ga)\!:\ov\si(e)\!\neq\!e\big\},
\quad
\E_{\R\C}(\ov\ga)\equiv\big\{
e\!=\!\{f,\ov\vt(f)\}\!:f\!\in\!\ov\Fl,\,\ov\si(f)\!=\!\ov\vt(f)\big\},\\
&\hbox{and}\quad \E_{\R\R}(\ov\ga)\equiv\big\{
e\!=\!\{f,\ov\vt(f)\}\!:f\!\in\!\ov\Fl,\,\ov\si(f)\!=\!f,
\,\ov\si(\ov\vt(f))\!=\!\ov\vt(f)\big\}.
\end{split}\EE

\vspace{.2in}

\noindent
For each $v\!\in\!\ov\Ver$, let 
\BE{Svdfn_e}
S_{v;\R}(\ov\ga)=\big\{f\!\in\!\ov\ve^{-1}(v)\!:\ov\si(f)\!=\!f\big\},\qquad
S_{v;\C}(\ov\ga)=\big\{f\!\in\!\ov\ve^{-1}(v)\!:\ov\si(f)\!\neq\!f\big\}.\EE
If $v\!\in\!\nV_{\R}(\ov\ga)$, the involution~$\ov\si$ restricts to 
an involution~$\ov\si_v$ on $S_{v;\R}(\ov\ga)$ and $S_{v;\C}(\ov\ga)$.
If $v\!\in\!\nV_{\C}(\ov\ga)$, $S_{v;\R}(\ov\ga)\!=\!\eset$ and
the involution~$\ov\si$ restricts to 
an involution $\ov\si_v\!=\!\ov\si_{\ov\si(v)}$ on 
$S_{v;\C}(\ov\ga)\!\cup\!S_{\ov\si(v);\C}(\ov\ga)$.\\

\noindent
Let $\ov\ga$ be as in~\eref{stgadfn_e}.
A vertex $v\!\in\!\ov\Ver$ of~$\ov\ga$ is \sf{trivalent}~if 
\BE{trivver_e}2\ov\fg(v)\!+\!\big|\ov\ve^{-1}(v)\big|\ge 3.\EE
The graph~$\ov\ga$ is \sf{trivalent} if all its vertices are trivalent.
For $g\!\in\!\Z^{\ge0}$,
we denote the (finite) set of (equivalence classes of) connected trivalent graphs~$\ov\ga$ 
as in~\eref{stgadfn_e} such~that 
\BE{stgacnd_e}
1\!+\!|\E(\ov\ga)|\!-\!|\ov\Ver|\!+\!\sum_{v\in\ov\Ver}\!\!\!\ov\fg(v)=g\EE
by~$\cA_{g,l;k}$.\\

\noindent
Suppose \hbox{$g,l,k\!\in\!\Z^{\ge0}$} with \hbox{$2(g\!+\!l)\!+\!k\!\ge\!3$},
$\ov\ga$ is a connected graph as in~\eref{stgadfn_e} satisfying~\eref{stgacnd_e},
and $v\!\in\!\ov\Ver$ is a vertex not satisfying~\eref{trivver_e}.
The vertices~$v$ and $\ov\si(v)$ can then be \sf{contracted} to obtain another
$(l,k)$-marked graph
$$ \ov\ga'\equiv 
\big(\ov\fg'\!:\!\ov\Ver'\!\lra\!\Z^{\ge0},
\ov\ve'\!:S_{l;k}\!\sqcup\!\ov\Fl'\!\lra\!\ov\Ver',\ov\vt'\!:\ov\Fl'\!\lra\!\ov\Fl',
\ov\si'\!:\ov\Ver'\!\sqcup\!\ov\Fl'\!\lra\!\ov\Ver'\!\sqcup\!\ov\Fl'\big)$$
satisfying~\eref{stgacnd_e} and
\begin{gather*}
\ov\Ver'=\ov\Ver\!-\!\big\{v,\ov\si(v)\big\}, \quad
\ov\fg'=\ov\fg|_{\ov\Ver'}, \quad 
\ov\Fl'\subset\ov\Fl\!\cap\!\ve^{-1}\big(\ov\Ver'\big),
~~\ov\si\big(\ov\Fl'\big)\!=\!\ov\Fl', \quad
\ov\si'=\ov\si|_{\ov\Ver'\sqcup \ov\Fl'},\\
\ov\ve'=\ov\ve~~\hbox{on}~\big(S_{l;k}\!\cap\!\ov\ve^{-1}(\ov\Ver')\!\big)\!\sqcup\!\ov\Fl',
\quad
\ov\vt'=\ov\vt~~\hbox{on}~\big\{f\in\ov\Fl'\!:\ov\ve(\ov\vt(f))\!\neq\!v,\ov\si(v)\big\}
\end{gather*}
as follows. 
We take 
$$\ov\Fl'=\begin{cases}
\ov\Fl\!\cap\!\ov\ve^{-1}\big(\ov\Ver'\big),
&\hbox{if}~|\ov\Fl\!\cap\!\ov\ve^{-1}(v)|\!=\!2;\\
\big\{f\!\in\!\ov\Fl\!\cap\!\ov\ve^{-1}\big(\ov\Ver'\big)\!:
\ov\ve\big(\ov\vt(f)\big)\!\neq\!v,\ov\si(v)\big\},
&\hbox{if}~|\ov\Fl\!\cap\!\ov\ve^{-1}(v)|\!=\!1.
\end{cases}$$
In the case $|\ov\Fl\!\cap\!\ov\ve^{-1}(v)|\!=\!2$, we extend $\ov\vt'$ 
from its specification above~by
$$\ov\vt'(f_1)=f_2\quad\hbox{if}~f_1,f_2\!\in\!\ov\Fl',~
f_1\!\neq\!f_2,\,
\ov\ve\big(\ov\vt(f_1)\big)\!=\!\ov\ve\big(\ov\vt(f_2)\big)\in\big\{v,\ov\si(v)\big\}.$$
In the case $|S_{l;k}\!\cap\!\ov\ve^{-1}(v)|\!=\!1$, we extend $\ov\ve'$ 
from its specification above~by
$$\ov\ve'(f_1)=\ov\ve\big(\ov\vt(f_2)\big) \quad\hbox{if}~f_1\!\in\!S_{l;k},\,f_2\!\in\!\ov\Fl',~
\ov\ve(f_1)\!=\!\ov\ve(f_2)\in\big\{v,\ov\si(v)\big\}.$$
By the assumption that $v$ does not satisfy~\eref{trivver_e}, 
these extensions are well-defined.\\

\noindent
Let \hbox{$g,l,k\!\in\!\Z^{\ge0}$} with \hbox{$2(g\!+\!l)\!+\!k\!\ge\!3$}.
For each $\ov\ga\!\in\!\cA_{g,l;k}$, denote by
$$\R\cM_{\ov\ga}\subset\R\ov\cM_{g,l;k}$$
the subspace parametrizing marked real curves 
$$\cC\equiv\big(\Si,(z_i^+,z_i^-)_{i\in[l]},(z_i)_{i\in[k]},\si,\fJ\big)$$
with dual graph~$\ov\ga$.
Thus, the irreducible components~$\Si_v$ and the nodes~$z_e$
of~$\Si$ are indexed by the elements 
of~$\ov\Ver$ and $\E(\ov\ga)$, respectively. 
The node~$z_e$ corresponding to \hbox{$e\!=\!\{f,\ov\vt(f)\}$} is obtained 
by identifying a point $z_f\!\in\!\Si_{\ov\ve(f)}$ with a point   
\hbox{$z_{\ov\vt(f)}\!\in\!\Si_{\ov\ve(\ov\vt(f))}$}.
The marked point~$z_f$ corresponding to $f\!\in\!S_{l;k}$ is carried
by the irreducible component~$\Si_{\ov\ve(f)}$.
The involution~$\si$ sends~$\Si_v$ to~$\Si_{\ov\si(v)}$ and~$z_e$ 
to~$z_{\ov\si(e)}$.
The two sets in~\eref{nVRCdfn_e} correspond to the \sf{real components} of~$\Si$,
i.e.~those fixed by the involution, 
and the \sf{conjugate components} of~$\Si$, i.e.~those interchanged by the involution.
The three sets in~\eref{Etypdfn_e}
correspond to the C, E, and H-nodes of~$\Si$; 
see \cite[Section~3]{RBP} for the terminology.
Let
\BE{cMga_e1}\R\wt\cM_{\ov\ga}\!=\!p_{\R}^{-1}\big(\R\cM_{\ov\ga}\big)
\subset\R\wt\cM_{g,l;k}.\EE
The stratification of~$\R\wt\cM_{g,l;k}$ by the subspaces in~\eref{cMga_e1} 
is analogous to the stratification \cite[(3.2)]{RT2} in the complex case.\\

\noindent
For $v\!\in\!\nV_{\R}(\ov\ga)$, let $\R\cM_{\ov\ga;v}$ 
denote the moduli space of smooth real genus~$\ov\fg(v)$ connected curves with 
the real and conjugate marked points indexed by $S_{v;\R}(\ov\ga)$
and $S_{v;\C}(\ov\ga)$, respectively.
For $v\!\in\!\nV_{\C}(\ov\ga)$, let $\cM_{\ov\ga;v}^{\bu}$ 
denote the moduli space of smooth real curves 
with two genus~$\ov\fg(v)$ topological components, $\Si_v$ and~$\Si_{\ov\si(v))}$,
interchanged by the involution and carrying the marked points indexed by~$S_{v;\C}(\ov\ga)$,
and~$S_{\ov\si(v);\C}(\ov\ga)$, respectively;
we call such curves \sf{real doublets}.
The image of the immersion
\BE{iogaimm_e}\prod_{v\in\nV_{\R}(\ov\ga)}\!\!\!\!\!\R\cM_{\ov\ga;v}\times
\prod_{\{v,\ov\si(v)\}\subset\nV_{\C}(\ov\ga)}\hspace{-.33in}\cM_{\ov\ga;v}^{\bu}
\lra \R\ov\cM_{g,l;k}\EE
identifying the marked point~$z_f$ with $z_{\ov\vt(f)}$ for each $f\!\in\!\ov\Fl$
is~$\R\cM_{\ov\ga}$. 
This immersion descends to an isomorphism from the quotient of its domain
by the natural $\Aut(\ov\ga)$ action to~$\R\cM_{\ov\ga}$.\\

\noindent
By the last requirement in Definition~\ref{RUC_dfn2}, 
there exist covers
$$\R\wt\cM_{\ov\ga;v}\lra\R\cM_{\ov\ga;v},~~v\!\in\!\nV_{\R}(\ov\ga),
\quad\hbox{and}\quad
\wt\cM_{\ov\ga;v}^{\bu}\lra\cM_{\ov\ga;v}^{\bu},
~~\big\{v,\ov\si(v)\big\}\subset\nV_{\C}(\ov\ga),$$
with universal curves 
$$\R\wt\cU_{\ov\ga;v}\lra \R\wt\cM_{\ov\ga;v},~~v\!\in\!\nV_{\R}(\ov\ga),
\quad\hbox{and}\quad
\wt\cU_{\ov\ga;v}^{\bu}\lra \wt\cM_{\ov\ga;v}^{\bu},~~
\big\{v,\ov\si(v)\big\}\subset\nV_{\C}(\ov\ga),$$
and an immersion
\BE{wtiocRM_e}\wt\io_{\ov\ga}\!:\R\wt\cM_{\ov\ga}\!\equiv\!
\prod_{v\in\nV_{\R}(\ov\ga)}\!\!\!\!\!\R\wt\cM_{\ov\ga;v}\times
\prod_{\{v,\ov\si(v)\}\subset\nV_{\C}(\ov\ga)}\hspace{-.33in}\wt\cM_{\ov\ga;v}^{\bu}
\lra \R\wt\cM_{g,l;k}\EE
lifting~\eref{iogaimm_e}.\\

\noindent
Let $B\!\in\!H_2(X;\Z)$.
An \sf{$(l,k)$-marked degree~$B$ graph} is a tuple
\BE{gadfn_e}\begin{split} 
\ga\equiv 
\Big((\fg,\fd)\!:\!\Ver\!\lra\!\Z^{\ge0}\!\oplus\!H_2(X;\Z),
\ve\!:S_{l;k}\!\sqcup\!\Fl\!\lra\!\Ver,\vt\!:\Fl\!\lra\!\Fl,&\\
\si\!:\Ver\!\sqcup\!\Fl\!\lra\!\Ver\!\sqcup\!\Fl&\Big)
\end{split}\EE
such that the tuple
$$\ga_{\cM}\equiv \big(\fg\!:\!\Ver\!\lra\!\Z^{\ge0},
\ve\!:S_{l;k}\!\sqcup\!\Fl\!\lra\!\Ver,\vt\!:\Fl\!\lra\!\Fl,
\si\!:\Ver\!\sqcup\!\Fl\!\lra\!\Ver\!\sqcup\!\Fl\big)$$
is an $(l,k)$-marked graph and 
\BE{gafdcond_e}\fd\!\circ\!\si=-\phi_*\!\circ\!\fd, \qquad
\sum_{v\in\Ver}\!\!\!\fd(v)=B,\qquad
\blr{\om,\fd(v)}\ge0~~\forall~v\!\in\!\Ver\,.\EE
Denote by $\Aut(\ga)$ the group of automorphisms of~$\ga$,
i.e.~the subgroup of automorphisms of~$\ga_{\cM}$ preserving~$\fd$.
Let
\begin{gather*}
\nV_{\R}(\ga)=\nV_{\R}(\ga_{\cM}), \quad 
\nV_{\C}(\ga)=\nV_{\C}(\ga_{\cM}), \quad 
S_{v;\R}(\ga)=S_{v;\R}(\ga_{\cM}),~
S_{v;\C}(\ga)=S_{v;\C}(\ga_{\cM})~~\forall\,v\!\in\!\Ver,\\
\E(\ga)=\E\big(\ga_{\cM}\big), ~~
\E_{\C}(\ga)=\E_{\C}(\ga_{\cM}), ~~
\E_{\R\C}(\ga)=\E_{\R\C}(\ga_{\cM}), ~~
\E_{\R\R}(\ga)=\E_{\R\R}(\ga_{\cM}),~~
|\ga|=\big|\E(\ga)\big|.
\end{gather*} 
We call $\ga$ \sf{connected} if~$\ga_{\cM}$ is connected
and a vertex $v\!\in\!\Ver$ \sf{trivalent} if 
it satisfies~\eref{trivver_e} with the overlines dropped.\\ 

\noindent
Let \hbox{$g,l,k\!\in\!\Z^{\ge0}$} with \hbox{$2(g\!+\!l)\!+\!k\!\ge\!3$}
and $B\!\in\!H_2(X;\Z)$.
We denote the set of (equivalence classes of) connected graphs~$\ga$ 
as in~\eref{gadfn_e} such~that \eref{stgacnd_e} with the overlines dropped
holds and 
$$\fd(v)\!=\!0,~2\fg(v)\!+\!\big|\ve^{-1}(v)\big|\ge3 
\qquad\forall~v\!\in\!\Ver~\hbox{s.t.}~\blr{\om,\fd(v)}\!=\!0$$
by~$\cA_{g,l;k}^{\phi}(B)$.
As described below, the moduli space on the left-hand side of~\eref{evst_e} is stratified 
by the subspaces $\fM_{\ga}(J,\nu)$
that are indexed by $\ga\!\in\!\cA_{g,l;k}^{\phi}(B)$ and consist
of maps from domains of the same topological type~$\ga_{\cM}$.\\

\noindent
Let $\ga\!\in\!\cA_{g,l;k}^{\phi}(B)$ be as in~\eref{gadfn_e}.
The \sf{stabilization} $\ov\ga\!\in\!\cA_{g,l;k}$ 
of~$\ga$ is the trivalent graph as in~\eref{stgadfn_e}
obtained by contracting the non-trivalent vertices of~$\ga$ until all vertices become trivalent.
The set~$\ov\Ver$ thus consists of trivalent vertices of~$\ga$.
It contains all vertices $v\!\in\!\Ver$ with $\fg(v)\!>\!0$, but may be missing some vertices~$v$
with $\fg(v)\!=\!0$ and \hbox{$|\ve^{-1}(v)|\!\ge\!3$}.
Let
$$\ale(\ga)\equiv\Ver\!-\!\ov\Ver, \quad
\ale_{\R}(\ga)=\big\{v\!\in\!\ale(\ga)\!:\si(v)\!=\!v\big\}, \quad\hbox{and}\quad
\ale_{\C}(\ga)=\big\{v\!\in\!\ale(\ga)\!:\si(v)\!=\!v\big\}$$
denote the set of vertices contracted by the stabilization, 
its subset fixed by the involution on the graph, and the complement of this~subset.\\

\noindent
Define
\begin{alignat*}{2}
\R\wt\cU_{\ga;v}\!\equiv\!\R\wt\cU_{\ov\ga;v}&\lra
\R\wt\cM_{\ga;v}\!\equiv\!\R\wt\cM_{\ov\ga;v}
&\quad&\hbox{if}~v\!\in\!\nV_{\R}(\ov\ga), \\
\wt\cU_{\ga;v}^{\bu}\!\equiv\!\wt\cU_{\ov\ga;v}^{\bu}
&\lra \wt\cM_{\ga;v}^{\bu}\!\equiv\wt\cM_{\ov\ga;v}^{\bu}
&\quad&\hbox{if}~v\!\in\!\nV_{\C}(\ov\ga).
\end{alignat*}
If $v\!\in\!\ale_{\R}(\ga)$ with $|\ve^{-1}(v)|\!\ge\!3$, 
we take $\R\wt\cM_{\ga;v}\!=\!\R\cM_{\ga;v}$
with $\R\cM_{\ga;v}$ defined as before and 
$$\R\wt\cU_{\ga;v}\lra\R\wt\cM_{\ga;v}$$
to be the universal curve.
If $v\!\in\!\ale_{\C}(\ga)$ with $|\ve^{-1}(v)|\!\ge\!3$, 
we take $\wt\cM_{\ga;v}^{\bu}\!=\!\cM_{\ga;v}^{\bu}$
with $\cM_{\ga;v}^{\bu}$ defined as before and 
$$\wt\cU_{\ga;v}^{\bu}\lra\wt\cM_{\ga;v}^{\bu}$$
to be the universal curve.
For $v\!\in\!\ale_{\R}(\ga)$ with $|\ve^{-1}(v)|\!\le\!2$ and 
$v\!\in\!\ale_{\C}(\ga)$ with $|\ve^{-1}(v)|\!\le\!2$,
denote by $\R\wt\cM_{\ga;v}$ and $\wt\cM_{\ga;v}^{\bu}$ the one-point spaces.
Let
$$\R\wt\cM_{\ga}= \prod_{v\in\nV_{\R}(\ga)}\!\!\!\!\!\!\R\wt\cM_{\ga;v}\times
\prod_{\{v,\si(v)\}\subset\nV_{\C}(\ga)}\hspace{-.33in}\wt\cM_{\ga;v}^{\bu}\,.$$
Denote by
$$p_{\ga;v}\!:\R\wt\cM_{\ga}\lra\R\wt\cM_{\ga;v}, ~~v\!\in\!\nV_{\R}(\ga), \qquad
p_{\ga;v}^{\bu}\!:\R\wt\cM_{\ga}\lra \wt\cM_{\ga;v}^{\bu}~~v\!\in\!\nV_{\C}(\ga),$$
the projection maps.\\

\noindent
Let  $\Aut(\P^1)$ be the group of holomorphic automorphisms of~$\P^1$.
For $v\!\in\!\ale_{\C}(\ga)$ with $|\ve^{-1}(v)|\!\le\!2$, let
$$\wt\cU_{\ga;v}^{\bu}\equiv \Si_v\!\sqcup\!\Si_{\si(v)}\equiv \P^1\!\sqcup\!\P^1$$
be the genus~0 real doublet with the involution $\tau_1\!:\Si_v\!\lra\!\Si_{\si(v)}$
and the marked points indexed by \hbox{$\ve^{-1}(v)\!\cup\!\ve^{-1}(\si(v))$} so that 
the marked points on~$\Si_v$ are given~by
\BE{nodecondC_e}\big\{z_f\!:f\!\in\!\ve^{-1}(v)\big\}=
\begin{cases}
\{\i\},&\hbox{if}~\,|\ve^{-1}(v)|\!=\!1;\\
\{\i,0\},&\hbox{if}~\,|\ve^{-1}(v)|\!=\!2.
\end{cases}\EE
Define 
$$G_v=\begin{cases}
\{h\!\in\!\Aut(\P^1)\!:h(\i)\!=\!\i\},&\hbox{if}~|\ve^{-1}(v)|\!=\!1;\\
\{h\!\in\!\Aut(\P^1)\!:h(\i)\!=\!\i,h(0)\!=\!0\},&\hbox{if}~|\ve^{-1}(v)|\!=\!2.
\end{cases}$$

\vspace{.2in}

\noindent
For $\rho\!=\!\tau,\eta$, let $\Aut_{\rho}(\P^1)$ be the subgroup of $\Aut(\P^1)$
consisting of the automorphisms that commute with~$\rho$. 
Denote by $\Inv(\ga)$ the set of~maps 
\begin{gather*}
\rho\!:\big\{v\!\in\!\ale_{\R}(\ga)\!:|\ve^{-1}(v)|\!\le\!2\big\}
\lra\big\{\tau,\eta\} \qquad\hbox{s.t.}\quad
\rho(v)=\tau~~\hbox{if}~~S_{v;\R}(\ga)\neq\eset.
\end{gather*}
For $\rho\!\in\!\Inv(\ga)$ and $v\!\in\!\ale_{\R}(\ga)$ such that $|\ve^{-1}(v)|\!\le\!2$, 
let 
$$\R\wt\cU_{\ga;\rho;v}\equiv \Si_v\equiv \P^1$$
be the genus~0 real curve with the involution~$\rho(v)$
and the marked points
\BE{nodecondR_e}\big\{z_f\!:f\!\in\!\ve^{-1}(v)\big\}=
\begin{cases}
\{1\},&\hbox{if}~|\ve^{-1}(v)|\!=\!1;\\
\{1,-1\},&\hbox{if}~|S_{v;\R}(\ga)|\!=\!2;\\
\{\i,0\},&\hbox{if}~|S_{v;\C}(\ga)|\!=\!2.
\end{cases}\EE
Define
$$G_{\rho;v}=\begin{cases}
\{h\!\in\!\Aut_{\tau}(\P^1)\!:h(1)\!=\!1\},&\hbox{if}~|\ve^{-1}(v)|\!=\!1;\\
\{h\!\in\!\Aut_{\tau}(\P^1)\!:h(1)\!=\!1,h(-1)\!=\!-1\},&\hbox{if}~|S_{v;\R}(\ga)|\!=\!2;\\
\{h\!\in\!\Aut_{\rho(v)}(\P^1)\!:h(\i)\!=\!\i,h(0)\!=\!0\},&\hbox{if}~|S_{v;\C}(\ga)|\!=\!2.
\end{cases}$$

\vspace{.2in}

\noindent
For each $v\!\in\!\nV_{\C}(\ga)$, we have thus constructed a fibration 
\BE{pigarhovC_e}
\pi_{\ga;v}^{\bu}\!\equiv\!\pi_{\ga;v}\!\sqcup\!\pi_{\ga;\si(v)}\!:
\wt\cU_{\ga;v}^{\bu}\!\equiv\!\wt\cU_{\ga;v}\!\sqcup\!\wt\cU_{\ga;\si(v)}
\lra \wt\cM_{\ga;v}^{\bu}\EE
by smooth genus~$\fg(v)$ real marked doublets.
For $\rho\!\in\!\Inv(\ga)$ and \hbox{$v\!\in\!\nV_{\R}(\ga)$}, 
we have constructed a fibration
\BE{pigarhovR_e}\pi_{\ga;\rho;v}\!:\R\wt\cU_{\ga;\rho;v}\lra \R\wt\cM_{\ga;v}\EE
by smooth genus~$\fg(v)$ real marked curves 
(described above without the $\rho$~subscript except for 
$v\!\in\!\ale_{\R}(\ga)$ with $|\ve^{-1}(v)|\!\le\!2$).
Define
\begin{gather*}
\R\wt\cU_{\ga;\rho} = \bigg(
\bigsqcup_{v\in\nV_{\R}(\ga)}\!\!\!\!\!p_{\ga;v}^{\,*}\R\wt\cU_{\ga;\rho;v}\times
\bigsqcup_{\{v,\si(v)\}\subset\nV_{\C}(\ga)}
\hspace{-.32in} p_{\ga;v}^{\bu\,*}\wt\cU_{\ga;v}^{\bu}
\bigg)\!\!\bigg/\!\!\!\!\sim \qquad\hbox{with}\\
\big((\cC_v)_{v\in\nV_{\R}(\ga)},(\cC_v^{\bu})_{\{v,\si(v)\}\subset\nV_{\C}(\ga)};
z_f\big)\sim
\big((\cC_v)_{v\in\nV_{\R}(\ga)},(\cC_v^{\bu})_{\{v,\si(v)\}\subset\nV_{\C}(\ga)};
z_{\vt(f)}\big)\quad\forall\,f\!\in\!\Fl,
\end{gather*}
i.e.~we identify marked points in a fiber of 
$$\bigsqcup_{v\in\nV_{\R}(\ga)}\!\!\!\!\!p_{\ga;v}^{\,*}\R\wt\cU_{\ga;\rho;v}\times
\bigsqcup_{\{v,\si(v)\}\subset\nV_{\C}(\ga)}
\hspace{-.3in} p_{\ga;v}^{\bu\,*}\wt\cU_{\ga;v}^{\bu}\lra\R\wt\cM_{\ga}$$
if they correspond to flags interchanged by the involution~$\vt$ on~$\Fl$.
The natural projection
\BE{pigarho_e}\pi_{\ga;\rho}\!:\R\wt\cU_{\ga;\rho}\lra\R\wt\cM_{\ga}\EE
is a fibration whose fibers are  nodal marked real curves with dual graph~$\ga_{\cM}$;
the irreducible components~$\Si_v$ of these fibers are indexed by the set~$\Ver$.
Let $\R\wt\cU_{\ga;\rho}^*\!\subset\!\R\wt\cU_{\ga;\rho}$
be the complement of the nodes of the fibers of~$\pi_{\ga;\rho}$.\\

\noindent
The involutions~$\si$ and~$\rho$ induce an involution~$\wt\si_{\ga;\rho}$ 
on~$\R\wt\cU_{\ga;\rho}$.
Let
$$\cT_{\ga;\rho}\equiv\ker\tnd\big(\pi_{\ga;\rho}|_{\R\wt\cU_{\ga;\rho}^*}\big)
\lra \R\wt\cU_{\ga;\rho}^*$$
be the vertical tangent bundle. 
Denote by~$\fJ_{\ga;\rho}$ the complex structure on this complex line bundle. 
The~group 
$$G_{\ga;\rho}^{\circ}\equiv 
\prod_{\begin{subarray}{c}v\in\ale_{\R}(\ga)\\ |\ve^{-1}(v)|\le2\end{subarray}}
\!\!\!\!\!\!\!\!G_{\rho;v}
~\times
\prod_{\begin{subarray}{c}\{v,\si(v)\}\subset\ale_{\C}(\ga)\\ |\ve^{-1}(v)|\le2\end{subarray}}
\!\!\!\!\!\!\!\!\!\!\!\!\!G_v$$
acts on $\R\wt\cU_{\ga;\rho}$ by reparametrizing the irreducible components~$\Si_v$ of the fibers 
with \hbox{$|\ve^{-1}(v)|\!\le\!2$}.
Let 
\BE{qgarhodfn_e}
q_{\ga;\rho}\!:  \R\wt\cU_{\ga;\rho}\lra\R\wt\cU_{g,l;k}\big|_{\R\wt\cM_{\ov\ga}}\EE
be the surjection covering the composition of the projection 
$\R\wt\cM_{\ga}\!\lra\!\R\wt\cM_{\ov\ga}$ with~\eref{wtiocRM_e} and contracting 
the irreducible components of the fibers of~$\pi_{\ga;\rho}$ indexed by~$\ale(\ga)$.
Denote  by $G_{\ga;\rho}$ the group of holomorphic automorphisms of~$q_{\ga;\rho}$
that commute with~$\wt\si_{\ga;\rho}$ and preserve the marked points.
The identity component of~$G_{\ga;\rho}$ is~$G_{\ga;\rho}^{\circ}$;
the group $G_{\ga;\rho}/G_{\ga;\rho}^{\circ}$ is naturally isomorphic to~$\Aut(\ga)$.\\

\noindent
For $J\!\in\!\cJ_{\om}^{\phi}$, define 
\begin{equation*}\begin{split}
\Ga^{0,1}_{\ga;\rho}(X;J)^{\phi}
=\Big\{\nu\!\in\!\Ga\big(\R\wt\cU_{\ga;\rho}^*\!\times\!X;
\pi_1^*(\cT_{\ga;\rho},-\fJ_{\ga;\rho})^*\!\otimes_{\C}\!\pi_2^*(TX,J)\big)\!:
\supp(\nu)\!\subset\!\R\wt\cU_{\ga;\rho}^*\!\times\!X,&\\
\tnd\phi\!\circ\!\nu\!=\!\nu\!\circ\!\tnd\wt\si_{\ga;\rho}&\Big\}.
\end{split}\end{equation*}
For $\nu\!\in\!\Ga^{0,1}_{\ga;\rho}(X;J)^{\phi}$, let
$\wt\fM_{\ga;\rho}(J,\nu)$ be the space of~tuples
\BE{Jnumap_e5}\u\equiv\big(u\!:\Si\!\lra\!X,(z_f)_{f\in S_{l;k}},\si,\fJ\big),\EE
where $(\Si,(z_f)_{f\in S_{l;k}},\si,\fJ)$
is a fiber of $\pi_{\ga;\rho}$ and $u$ is a $(\phi,\si)$-real map such~that  
\begin{alignat*}{2}
\dbar_{J,\fJ}u\big|_z&=\nu\big(z,u(z)\big)\in 
\big(T_z\Si,-\fJ\big)^*\!\otimes_{\C}\!(T_{u(z)}X,J) &\qquad&\forall\,z\!\in\!\Si,\\
\quad u_*[\Si_v]&=\fd(v)\in H_2(X;\Z) &\qquad&\forall\,v\!\in\!\Ver\,.
\end{alignat*}

\vspace{.2in}

\noindent
For $J\!\in\!\cJ_{\om}^{\phi}$ and $\nu\!\in\!\Ga^{0,1}_{g,l;k}(X;J)^{\phi}$, let
$$\nu_{\ga;\rho}=\big\{q_{\ga;\rho}\!\times\!\id_X\big\}^*\nu \in 
\Ga^{0,1}_{\ga;\rho}(X;J)^{\phi}\,, \qquad
\wt\fM_{\ga;\rho}(J,\nu)=\wt\fM_{\ga;\rho}(J,\nu_{\ga;\rho})\,.$$
The group $G_{\ga;\rho}$ acts on $\wt\fM_{\ga;\rho}(J,\nu)$ 
by reparametrizing the domains of maps as usual.
Define
\BE{fMgadfn_e}\fM_{\ga}(J,\nu)=\bigsqcup_{\rho\in\Inv(\ga)}\!\!\!\!\!
\wt\fM_{\ga;\rho}(J,\nu)\big/G_{\ga;\rho}
\subset\ov\fM_{g,l;k}(X,B;J,\nu)^{\phi}\,.\EE
The stratification by the subspaces~\eref{fMgadfn_e} is analogous to the stratification 
by the subspaces in 
\cite[(5.1.5)]{MS} and on the right-hand side of \cite[(3.25)]{RT2}.
The number of nodes of the domains in the stratum~\eref{fMgadfn_e} is~$|\ga|$.
The dual graph of the element $\st(\u)$ of~$\R\ov\cM_{g,l;k}$
for any \hbox{$\u\!\in\!\fM_{\ga}(J,\nu)$} is~$\ov\ga$.
By \cite[Proposition~4.1.5]{MS}, the~set 
$$\cA_{g,l;k}^{\phi}(B;J,\nu)\equiv
\big\{\ga\!\in\!\cA_{g,l;k}^{\phi}(B)\!:\fM_{\ga}(J,\nu)\!\neq\!\eset\big\}$$
is finite for each pair~$(J,\nu)$.

\subsection{Strata of simple real maps: definitions}
\label{RTstr_subs2}

\noindent
Let~$\ga$ be as in~\eref{gadfn_e}. 
For each subset $\sV\!\subset\!\Ver$, let 
\BE{sVsEsF_e}\begin{aligned}
\sV_0(\ga)&=\big\{v\!\in\!\sV\!:\fd(v)\!=\!0\big\}, &
\sE_{\ga}(\sV)&=\big\{\{f_1,f_2\}\!\in\!\E(\ga)\!:\ve(f_1),\ve(f_2)\!\in\!\sV\big\},\\
\sF_{\ga}(\sV)&=\big\{f\!\in\!\Fl\!:\ve(f),\ve(\vt(f))\!\in\!\sV\big\}, &
\sF_{\ga}^*(\sV)&=\big\{f\!\in\!\Fl\!:\ve(f)\!\in\!\sV,\,\ve(\vt(f))\!\not\in\!\sV\big\}.
\end{aligned}\EE
The~tuple
$$\ga_{\sV}\equiv \big(\fg\!:\!\sV\!\lra\!\Z^{\ge0},
\ve\!:(S_{l;k}\!\cap\!\ve^{-1}(\sV))\!\sqcup\!\sF_{\ga}(\sV)\!\lra\!\sV,
\vt\!:\sF_{\ga}(\sV)\!\lra\!\sF_{\ga}(\sV)\big)$$
is then an $S_{l;k}\!\cap\!\ve^{-1}(\sV)$-marked graph 
(without an involution~$\ov\si$ as in~\eref{stgadfn_e}).
Let $\pi_0(\ga,\sV)$ be the set of connected components of~$\ga_{\sV}$ and
$$\ell(\ga,\sV)\equiv \big|\sE_{\ga}(\sV)\big|\!-\!|\sV|\!+\!\big|\pi_0(\ga,\sV)\big|$$
be the number of loops in~$\ga_{\sV}$, i.e.~the sum of the genera 
of its connected components.\\

\noindent
For $\sV\!\subset\!\Ver$ as above, let 
$$\sF_{\ga}^{\circ}\!(\sV)\subset\Fl\!\cap\!\ve^{-1}\big(\Ver\!-\!\sV\big)$$
be the collection of the flags~$f$ such that the edge $e\!\equiv\!\{f,\vt(f)\}$ 
disconnects a connected component~$\ga_{\sV'}$ of~$\ga_{\sV}$ from the rest of~$\ga$.
Denote by
$$\sF_{\ga}^{\dag}\!(\sV)\subset \sF_{\ga}^{\circ}\!(\sV)$$
the subcollection of the flags~$f$ such that \hbox{$S_{l;k}\!\cap\!\ve^{-1}(\sV')\!=\!\eset$}
for the subset $\sV'\!\subset\!\sV$ of the vertices separated from the remainder of~$\ga$
by the edge $\{f,\vt(f)\}$ .
In particular,
\BE{sEprp_e}\begin{split}
&\big|\E(\ga)\!-\!\sE_{\ga}(\sV)\!-\!\sE_{\ga}(\Ver\!-\!\sV)\big|
+\big|\sF_{\ga}^{\dag}\!(\sV)\big|+\big|S_{l;k}\!\cap\!\ve^{-1}(\sV)\big| \\
&\hspace{1in} \ge\big|\E(\ga)\!-\!\sE_{\ga}(\sV)\!-\!\sE_{\ga}(\Ver\!-\!\sV)\big|
+\big|\sF_{\ga}^{\circ}\!(\sV)\big| \ge 2\big|\pi_0(\ga,\sV)\big|\,.
\end{split}\EE
For $\rho\!\in\!\Inv(\ga)$, let
\begin{equation*}\begin{split}
\Ga^{0,1}_{\ga;\rho;\sV}(X;J)
=\Big\{\nu\!\in\!\Ga^{0,1}_{\ga;\rho}(X;J)^{\phi}\!:
\nu\big|_{(p_{\ga;v}^{\,*}\R\wt\cU_{\ga;\rho;v}\cap\R\wt\cU_{\ga;\rho}^*)\times X}
\!=\!0~\forall\,v\!\in\!\nV_{\R}(\ga)\!\cap\!\sV,&\\
\nu\big|_{(p_{\ga;v}^{\bu\,*}\R\wt\cU_{\ga;v}^{\bu}\cap\R\wt\cU_{\ga;\rho}^*)\times X}
\!=\!0~\forall\,v\!\in\!\nV_{\C}(\ga)\!\cap\!\sV&\Big\}.
\end{split}\end{equation*}

\vspace{.2in}

\noindent
Let $S(\ga)$ denote the collections of subsets $\sV\!\subset\!\Ver$ such~that 
$$\ale(\ga)\subset\sV,\qquad \si(\sV)=\sV, \qquad
\fg(v)\!=\!0~~\forall\,v\!\in\!\sV\,.$$
For each $\sV\!\in\!S(\ga)$, define
$$\R\cM_{\ga;\sV}=
 \prod_{v\in \sV_0\!\cap\nV_{\R}(\ga)}\hspace{-.25in}\R\cM_{\ga;\rho;v}
~\times
\prod_{\{v,\si(v)\}\subset\sV_0\!\cap\nV_{\C}(\ga)}\hspace{-.4in}\cM_{\ga;v}^{\bu}
\,.$$
By~\eref{sEprp_e} with $\sV$ replaced by~$\sV_0$,
\BE{cMgasV_e} 
\big(\!\dim_{\R}\!\R\cM_{\ga;\sV}\!+\!\big|\sF_{\ga}^{\dag}\!(\sV_0)\big|\big)
+\big|\sE_{\ga}(\sV_0)\big|+\big|\pi_0(\ga,\sV_0)\big| 
\ge  3\ell(\ga,\sV_0).\EE

\vspace{.2in}

\noindent
Let $\ov\ga\!\in\!\cA_{g,l;k}$ be as in~\eref{stgadfn_e}. 
Denote by $\cA(\ov\ga)$ the collection of pairs
$(\ga,\vp)$, where \hbox{$\ga\!\in\!\cA_{g',l;k}^{\phi}(B)$} is as in~\eref{gadfn_e} such~that 
\begin{gather}
\label{gared_e1}
\ov\Ver\subset\Ver, \quad \ale(\ga,\ov\ga)\!\equiv\!\Ver\!-\!\ov\Ver\in S(\ga),  \quad
\sF_{\ga}^{\circ}\!\big(\ale(\ga,\ov\ga)_0\big)\subset\sF_{\ga}\big(\ale(\ga,\ov\ga)\big),\\
\notag
\sF_{\ga}\big(\ov\Ver\big)\subset\ov\Fl\subset
\sF_{\ga}\big(\ov\Ver\big)\!\cup\!\sF_{\ga}^*\big(\ov\Ver\big), 
\quad \si(\ov\Fl)=\ov\Fl,\\
\notag
\ov\fg=\fg|_{\ov\Ver}, \quad \ov\ve\big|_{(S_{l;k}\cap\ve^{-1}(\ov\Ver))\sqcup\ov\Fl}
=\ve\big|_{(S_{l;k}\cap\ve^{-1}(\ov\Ver))\sqcup\ov\Fl}, \quad
\ov\vt|_{\sF_{\ga}(\ov\Ver)}=\vt|_{\sF_{\ga}(\ov\Ver)}, \quad
\ov\si=\si|_{\ov\Ver\sqcup\ov\Fl},
\end{gather}
and $\vp\!:S_{l;k}\!\cap\!\ve^{-1}(\ale(\ga,\ov\ga))\!\lra\!\sF_{\ga}^*(\ov\Ver)\!-\!\ov\Fl$
is a $(\si,\si_{l;k})$-equivariant injective map such~that
$$\ov\ve|_{S_{l;k}\cap\ve^{-1}(\ale(\ga,\ov\ga))}=\ve\!\circ\!\vp\!:
S_{l;k}\!\cap\!\ve^{-1}\big(\ale(\ga,\ov\ga)\big)\lra \ov\Ver.$$
Thus, $\ov\ga$ is obtained from~$\ga$ by
\begin{enumerate}[label=$\bu$,leftmargin=*]

\item dropping every vertex $v\!\in\!\ale(\ga,\ov\ga)$,

\item attaching each marked point~$f\!\in\!S_{l;k}$ carried by a vertex in $\ale(\ga,\ov\ga)$
to the vertex $\ve(\vp(f))\!\in\!\ov\Ver$,

\item identifying some pairs of the flags in $\sF_{\ga}^*(\ov\Ver)$
 into the edges of~$\ov\ga$ that are not contained in~$\sE_{\ga}(\ov\Ver)$.

\end{enumerate}
Define
\begin{alignat*}{3}
\ale(\ga)_0&=\ale(\ga)_0(\ga), &\quad 
\ale(\ga)_{\bu}&=\ale(\ga)\!-\!\ale(\ga)_0, &\quad
\ale(\ga)_0^c&=\Ver\!-\!\ale(\ga)_0,\\
\ale(\ga,\ov\ga)_0&=\ale(\ga,\ov\ga)_0(\ga), &\quad
\ale(\ga,\ov\ga)_{\bu}&=\ale(\ga,\ov\ga)\!-\!\ale(\ga,\ov\ga)_0, &\quad
\ale(\ga,\ov\ga)_0^c&=\Ver\!-\!\ale(\ga,\ov\ga)_0\,.
\end{alignat*}

\vspace{.2in}

\noindent
If $\ov\ga$ is the stabilization of~$\ga$, then $g'\!=\!g$, $\ale(\ga,\ov\ga)\!=\!\ale(\ga)$, and
\hbox{$(\ga,\vp_{\ga})\!\in\!\cA(\ov\ga)$} for a unique injective~map
\BE{gavpovga_e}\vp_{\ga}\!:
S_{l;k}\!\cap\!\ve^{-1}\big(\ale(\ga)\big)
\lra \sF_{\ga}^*(\ov\Ver)\!-\!\ov\Fl\,.\EE
A more elaborate example is depicted in Figure~\ref{gaovga_fig}.\\

\begin{figure}
\begin{pspicture}(-3,-1)(10,2.5)
\psset{unit=.3cm}
\psline[linewidth=.08](-4,0)(0,0)\pscircle*(0,0){.3}\pscircle*(-4,0){.3}
\psline[linewidth=.05](-5.5,1.5)(-4,0)\rput(-5.5,2.2){\sm{$1$}}\rput(-4,-1){\sm{$1$}}
\psline[linewidth=.05](1.5,1.5)(0,0)\rput(1.5,2.2){\sm{$2$}}\rput(0,-1){\sm{$1$}}
\rput(-2,-3){$\ov\ga$}
\psline[linewidth=.08](7,0)(19,0)\pscircle*(11,0){.3}\pscircle*(7,0){.3}
\pscircle*(15,0){.3}\pscircle*(19,0){.3}
\psline[linewidth=.05](4.5,2.5)(6,4)\rput(6.5,4){\sm{$1$}}\rput(7,-1){\sm{$1$}}
\psline[linewidth=.05](19,0)(17.5,1.5)\rput(17.2,2){\sm{$2$}}\rput(19,-1){\sm{$1$}}
\psline[linewidth=.08](7,0)(4.5,2.5)\pscircle*(4.5,2.5){.3}
\psline[linewidth=.08](19,0)(21.5,2.5)\pscircle*(21.5,2.5){.3}
\rput(13,-3){$\ga$}
\psline[linewidth=.08](25,0)(28.7,0)\psline[linewidth=.08](29.3,0)(36.7,0)
\psline[linewidth=.08](37.3,0)(41,0)
\pscircle(29,0){.3}\pscircle*(25,0){.3}
\pscircle*(33,0){.3}\pscircle(37,0){.3}\pscircle*(41,0){.3}
\psarc[linewidth=.08](33,0){4}{5}{175}
\psarc[linewidth=.08](33,0){8}{0}{180}\pscircle*(33,8){.3}
\psline[linewidth=.05](33,8)(33,5.2)\rput(33.5,5.2){\sm{$1$}}\rput(25,-1){\sm{$1$}}
\psline[linewidth=.05](41,0)(39.5,1.5)\rput(39.2,2){\sm{$2$}}\rput(41,-1){\sm{$1$}}
\rput(33,-3){$\ga'$}
\end{pspicture}  
\caption{A graph $\ov\ga$ as in~\eref{stgadfn_e} and graphs $\ga,\ga'$ as  in~\eref{gadfn_e}
such that $(\ga,\vp_{\ga})$ and $(\ga',\vp')$ are elements of $\cA(\ov\ga)$
for some~$\vp'$.
All vertices, edges, and marked points are taken to be real (i.e.~$\si$ acts trivially).
The value of~$\fg$ on the vertices with the number~1 next to them is~1;
its value on the remaining vertices is~0.
The value of~$\fd'$ on each of the unshaded vertices is~0;
the values of~$\fd$ on all vertices in the middle diagram and 
of~$\fd'$ on the shaded vertices in the last diagram are not~0.   
The graph~$\ov\ga$ is the stabilization of~$\ga$, but not of~$\ga'$.}
\label{gaovga_fig}
\end{figure}

\noindent
Let $(\ga,\vp)\!\in\!\cA(\ov\ga)$ and $\rho\!\in\!\Inv(\ga)$. 
The reduction of~$\ga$ to~$\ov\ga$ described above determines a smooth~map
\BE{stgathdfn_e0} \st_{\ga,\vp}\!: \R\wt\cM_{\ga} \lra \R\wt\cM_{\ov\ga} 
\subset \R\ov\cM_{g,l;k}\,.\EE
This map lifts to smooth~maps
\begin{alignat*}{2}
q_{\ga,\vp;v}\!: p_{\ga;v}^{\,*}\R\wt\cU_{\ga;\rho;v}&\lra  \R\wt\cU_{g,l;k},
&\qquad &v\!\in\!\nV_{\R}(\ov\ga), \\
q_{\ga,\vp;v}^{\bu}\!: p_{\ga;v}^{\bu\,*}
\R\wt\cU_{\ga;v}^{\bu}&\lra \R\wt\cU_{g,l;k},
&\qquad &\big\{v,\si(v)\big\}\!\subset\!\nV_{\C}(\ov\ga).
\end{alignat*}
Each map $q_{\ga,\vp;v}$ restricts to a degree~one map from  a fiber~of
$$p_{\ga;v}^{\,*}\R\wt\cU_{\ga;\rho;v}\big|_{\st_{\ga,\vp}^{-1}(\cC)}\lra
\st_{\ga,\vp}^{-1}(\cC)\subset \R\wt\cM_{\ga}$$
onto a real irreducible component of a fiber of~\eref{RwtcU_e} over~$\R\wt\cM_{\ov\ga}$.
Each map $q_{\ga,\vp;v}^{\bu}$ restricts to a degree~one map from 
a connected component of a fiber 
$$p_{\ga;v}^{\bu\,*}\R\wt\cU_{\ga;v}^{\bu}\big|_{\st_{\ga,\vp}^{-1}(\cC)}\lra
\st_{\ga,\vp}^{-1}(\cC)\subset \R\wt\cM_{\ga}$$
onto a conjugate irreducible component of a fiber of~\eref{RwtcU_e} over~$\R\wt\cM_{\ov\ga}$.
In both cases, the marked and nodal points of the domain are preserved. 
However, in general these maps do not induce a continuous map 
even over a fiber of~\eref{pigarho_e}.\\

\noindent
For $J\!\in\!\cJ_{\om}^{\phi}$ and $\nu\!\in\!\Ga^{0,1}_{g,l;k}(X;J)^{\phi}$, define 
$\nu_{\ga,\vp;\rho}\!\in\!\Ga^{0,1}_{\ga;\rho;\ale(\ga,\ov\ga)}(X;J)^{\phi}$ by
\begin{equation*}\begin{split}
\nu_{\ga,\vp;\rho}\big|_{(p_{\ga;v}^{\,*}\R\wt\cU_{\ga;\rho;v}\cap\R\wt\cU_{\ga;\rho}^*)\times X}
&=\begin{cases}
\{q_{\ga,\vp;v}\!\times\!\id_X\big\}^*\nu,&\hbox{if}~v\!\in\!\nV_{\R}(\ov\ga);\\
0,&\hbox{if}~v\!\in\!\nV_{\R}(\ga)\!\cap\!\ale(\ga,\ov\ga);
\end{cases}\\
\nu_{\ga,\vp;\rho}\big|_{p_{\ga;v}^{\bu\,*}\R\wt\cU_{\ga;v}^{\bu}\cap\R\wt\cU_{\ga;\rho}^*)\times X}
&=\begin{cases}
\{q_{\ga,\vp;v}^{\bu}\!\times\!\id_X\big\}^*\nu,&\hbox{if}~v\!\in\!\nV_{\C}(\ov\ga);\\
0,&\hbox{if}~v\!\in\!\nV_{\C}(\ga)\!\cap\!\ale(\ga,\ov\ga).
\end{cases}
\end{split}\end{equation*}
Let
$$\wt\fM_{\ga,\vp;\rho}(J,\nu)=\wt\fM_{\ga;\rho}\big(J,\nu_{\ga,\vp;\rho}\big),
\qquad
\fM_{\ga,\vp}(J,\nu)=\bigsqcup_{\rho\in\Inv(\ga)}\!\!\!\!\!
\wt\fM_{\ga;\vp;\rho}(J,\nu)\big/G_{\ga;\rho}^{\circ}\,.$$
The maps~\eref{stgathdfn_e0} with $\rho\!\in\!\Inv(\ga)$
and the evaluation morphism~$\ev$ determine a continuous map
\BE{stgathdfn_e} 
\st_{\ga,\vp}\!\times\!\ev\!: \fM_{\ga,\vp}(J,\nu)
\lra  \R\ov\cM_{g,l;k}\times \big(X^l\!\times\!(X^{\phi})^k\big)\,.\EE

\vspace{.2in}

\noindent
For $\u\!\in\!\wt\fM_{\ga,\vp;\rho}(J,\nu)$ as in~\eref{Jnumap_e5} 
and $v\!\in\!\ale(\ga,\ov\ga)_0$,
the restriction~$u_v$ of~$u$ to the irreducible component $\Si_v\!\subset\!\Si$
corresponding to~$v$ is constant. Thus, 
\BE{fMcZ_e}\wt\fM_{\ga,\vp;\rho}(J,\nu)\approx  
\R\cM_{\ga;\ale(\ga,\ov\ga)}\times\wt\cZ_{\ga,\vp;\rho}'(J,\nu)\EE
for some space $\wt\cZ_{\ga,\vp;\rho}'(J,\nu)$ of tuples $(u_v)_{v\in\ale(\ga,\ov\ga)_0^c}$ 
of $\ve^{-1}(v)$-marked maps with matching conditions at 
the points indexed~by $\Fl\!-\!\sF_{\ga}^{\circ}(\ale(\ga,\ov\ga)_0)$.
By the last assumption in~\eref{gared_e1}, 
$$\ve(f)\not\in\ov\Ver, \quad \fd\big(\ve(v)\big)\neq0 
\qquad\forall~f\!\in\!\sF_{\ga}^{\circ}\!\big(\ale(\ga,\ov\ga)_0\big).$$
Dropping the marked points~$z_f$ with $f\!\in\!\sF_{\ga}^{\dag}\!(\ale(\ga,\ov\ga)_0)$,
we thus obtain a fiber bundle
\BE{cZfib_e} \wt\cZ_{\ga,\vp;\rho}'(J,\nu) \lra \wt\cZ_{\ga,\vp;\rho}(J,\nu)\EE
with $|\sF_{\ga}^{\dag}\!(\ale(\ga,\ov\ga)_0)|$-dimensional fibers for 
some space $\wt\cZ_{\ga,\vp;\rho}(J,\nu)$ 
of tuples $(u_v)_{v\in\ale(\ga,\ov\ga)_0^c}$ of maps with marked points 
indexed~by $\ve^{-1}(v)\!-\!\sF_{\ga}^{\dag}\!(\ale(\ga,\ov\ga)_0)$
and with the same matching conditions as~before.\\

\noindent
The $G_{\ga;\rho}^{\circ}$-action
on the left-hand side of~\eref{fMcZ_e} corresponds to an action 
on the last factor on the  right-hand side.
The latter in turn descends to an action on the right-hand side of~\eref{cZfib_e}.
Let
$$\cZ_{\ga,\vp}'(J,\nu) =\bigsqcup_{\rho\in\Inv(\ga)}\!\!\!\!\!
\wt\cZ_{\ga,\vp;\rho}'(J,\nu) \big/G_{\ga;\rho}^{\circ}
\qquad\hbox{and}\qquad
\cZ_{\ga,\vp}(J,\nu) =\bigsqcup_{\rho\in\Inv(\ga)}\!\!\!\!\!
\wt\cZ_{\ga,\vp;\rho}(J,\nu) \big/G_{\ga;\rho}^{\circ}\,.$$
Thus,
\BE{fMcZ_e2}\fM_{\ga,\vp}(J,\nu)\approx  
\R\cM_{\ga;\ale(\ga,\ov\ga)}\times\cZ_{\ga,\vp}'(J,\nu)\EE
and the fibers of the projection
\BE{cZfib_e2} \cZ_{\ga,\vp}'(J,\nu) \lra \cZ_{\ga,\vp}(J,\nu)\EE
are $|\sF_{\ga}^{\dag}\!(\ale(\ga,\ov\ga)_0)|$-dimensional.
The map~\eref{stgathdfn_e} factors through the projection from
the left-hand side of~\eref{fMcZ_e2}
to the right-hand side in~\eref{cZfib_e2} and a continuous~map
\BE{redstev_e} \st_{\ga,\vp}\!\times\!\ev\!:\cZ_{\ga,\vp}(J,\nu)
\lra  \R\ov\cM_{g,l;k}\times \big(X^l\!\times\!(X^{\phi})^k\big)\,.\EE

\vspace{.2in}
 
\noindent
Denote~by 
\BE{fMga_e2} \fM_{\ga,\vp}^*(J,\nu)\subset\fM_{\ga,\vp}(J,\nu)
\qquad\hbox{and}\qquad
\wt\fM_{\ga,\vp;\rho}^*(J,\nu)\subset\wt\fM_{\ga,\vp;\rho}(J,\nu)\EE
the subspaces consisting of maps as in~\eref{Jnumap_e5} such~that 
$u_v$ is simple for every \hbox{$v\!\in\!\ale(\ga,\ov\ga)_{\bu}$} 
and 
$$u(\Si_{v_1})\neq u(\Si_{v_2}) \qquad\forall~v_1,v_2\in\ale(\ga,\ov\ga)_{\bu},
~v_1\!\neq\!v_2.$$
Denote~by 
$$\cZ_{\ga,\vp}'^*(J,\nu)\subset\cZ_{\ga,\vp}'(J,\nu) \qquad\hbox{and}\qquad
\cZ_{\ga,\vp}^*(J,\nu)\subset\cZ_{\ga,\vp}(J,\nu) $$
the image of $\fM_{\ga,\vp}^*(J,\nu)$ under the projection to the last component
in~\eref{fMcZ_e2} and the image of $\cZ_{\ga,\vp}'^*(J,\nu)$ under~\eref{cZfib_e2}.
The splitting~\eref{fMcZ_e2} and the fibration~\eref{redstev_e} restrict to
a splitting 
$$ \fM_{\ga,\vp}^*(J,\nu)\approx  
\R\cM_{\ga;\ale(\ga,\ov\ga)}\times\cZ_{\ga,\vp}'^*(J,\nu)$$
and fibration
$$ \cZ_{\ga,\vp}'^*(J,\nu)\!=\!\cZ_{\ga,\vp}'(J,\nu)
\big|_{\cZ_{\ga,\vp}^*(J,\nu)} \lra \cZ_{\ga,\vp}^*(J,\nu)$$
with $|\sF_{\ga}^{\dag}\!(\ale(\ga,\ov\ga)_0)|$-dimensional fibers.
The elements of the base of this fibration are real analogues 
of \sf{reduced GU-maps} of \cite[Definition~3.10]{RT2}.

\subsection{Strata of simple real maps: properties}
\label{RTstr_subs3}

\noindent
Let $\ga\!\in\!\cA_{g,l;k}^{\phi}(B)$ be as in~\eref{gadfn_e} and 
$\ov\ga$ be the stabilization of~$\ga$.
Denote by $\cA(\ga)\!\subset\!\cA(\ov\ga)$ the subset of pairs $(\ga',\vp)$
with
\BE{gaprdfn_e}\begin{split} 
\ga'\equiv 
\Big((\fg',\fd')\!:\!\Ver'\!\lra\!\Z^{\ge0}\!\oplus\!H_2(X;\Z),
\ve'\!:S_{l;k}\!\sqcup\!\Fl'\!\lra\!\Ver',\vt'\!:\Fl'\!\lra\!\Fl',&\\
\si'\!:\Ver'\!\sqcup\!\Fl'\!\lra\!\Ver'\!\sqcup\!\Fl'&\Big)
\in\cA_{g',l;k}^{\phi}(B')
\end{split}\EE
such that
\BE{redgcond_e}
\fd'\big|_{\ov\Ver}=\fd\big|_{\ov\Ver}, \qquad
g'\!+\!\big|\ale(\ga',\ov\ga)_{\bu}\big|=g\!+\!\big|\ale(\ga)_{\bu}\big|,\EE
and there are~maps 
\BE{redmapcond_e}\ka\!:\ale(\ga)_{\bu}\lra\ale(\ga',\ov\ga)_{\bu}
\qquad\hbox{and}\qquad \vr\!:\ale(\ga)_{\bu}\lra\Z^+\EE
so that $\ka$ is surjective and $(\si',\si)$-equivariant, 
$\vr$ is $\si$-invariant, and 
\BE{degprod_e}
\vr(v)\fd'\big(\!\ka(v)\!\big)=\fd(v) \qquad\forall\,v\!\in\!\ale(\ga)_{\bu}\,.\EE
In particular, 
\BE{degdiff_e}
\ell\big(\ga',\ale(\ga',\ov\ga)_0\big)\le g'\!-\!g, 
\quad B'=B-\sum_{v'\in\ale(\ga',\ov\ga)_{\bu}}\!\!\!
\bigg(\sum_{v\in\ka^{-1}(v')}\!\!\!\!\!\!\!\vr(v)~-\!1\!\!\bigg)\fd'(v')\,.\EE

\vspace{.2in}

\noindent
For example, $(\ga,\vp_{\ga})\!\in\!\cA(\ga)$. 
The map~$\ka$ is the identity in this case, while $\vr$ is the constant function
with value~1.
For suitable choices of the values of $\fd$ and $\fd'$ on
the shaded vertices in the middle and last diagrams in Figure~\ref{gaovga_fig},
$(\ga',\vp')\!\in\!\cA(\ga)$.

\begin{lmm}\label{mapred_lmm}
Suppose  $(X,\om,\phi)$ is a real symplectic manifold,
\hbox{$g,l,k\!\in\!\Z^{\ge0}$} with \hbox{$2(g\!+\!l)\!+\!k\!\ge\!3$},
\hbox{$B\!\in\!H_2(X;\Z)$}, and $\ga\!\in\!\cA_{g,l;k}^{\phi}(B)$.
Let $\ov\ga\!\in\!\cA_{g,l;k}$ be the stabilization of~$\ga$ and
\hbox{$(J,\nu)\!\in\!\cH_{g,l;k}^{\om,\phi}(X)$}.
For every element~$\u$ of $\ov\fM_{g,l;k}(X,B;J,\nu)^{\phi}$, 
there exist 
\BE{mapred_e0}(\ga',\vp)\!\in\!\cA(\ga),~\u'\!\in\!\cZ_{\ga',\vp}^*(J,\nu)
\qquad\hbox{s.t.}\quad
\big\{\ev\!\times\!\st\big\}(\u)=\big\{\ev\!\times\!\st_{\ga',\vp}\big\}(\u').\EE
\end{lmm}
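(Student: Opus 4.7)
The plan is to construct $(\ga',\vp)$ and $\u'$ by simplifying $\u$'s bubble components: factor out multiple covers and quotient by coinciding images, leaving the stable part of the domain untouched. Without loss of generality $\ga$ is the combinatorial type of $\u$, so that $\fd|_{\ov\Ver}$ records the degrees of $\u$ on its stable components (otherwise the compatibility $\st(\u)\!\in\!\R\wt\cM_{\ov\ga}$ forced by~\eref{mapred_e0} makes the claim vacuous or reduces to this case).

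First I exploit that $u_\cM$ is constant on each $\Si_v$ with $v\!\in\!\ale(\ga)$, so $\nu(u_\cM,u)\!\circ\!\tnd u_\cM|_{\Si_v}\!=\!0$ and $u_v\!\equiv\!u|_{\Si_v}$ is genuinely $J$-holomorphic; for $v\!\in\!\ale(\ga)_\bu$ it is non-constant. I apply the simple-factorization theorem \cite[Proposition~2.5.1]{MS} on each such $v$: there exist a simple $J$-holomorphic $u_v'\!:\Si_v'\!\lra\!X$ and a branched cover $h_v\!:\Si_v\!\lra\!\Si_v'$ of degree $\vr(v)\!\ge\!1$ with $u_v\!=\!u_v'\!\circ\!h_v$. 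When $\si(v)\!=\!v$ the uniqueness of this factorization yields a distinguished antiholomorphic involution $\si_v'$ on $\Si_v'$ satisfying $\phi\!\circ\!u_v'\!=\!u_v'\!\circ\!\si_v'$ and $h_v\!\circ\!\si_v\!=\!\si_v'\!\circ\!h_v$, because $(u_v',h_v)$ and $(\phi\!\circ\!u_v',h_v\!\circ\!\si_v)$ are two simple factorizations of the same map~$u_v$; when $\si(v)\!\neq\!v$ I factor $u_v$ freely and transport to $\Si_{\si(v)}'$ via~$\phi$.

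Second, I declare $v\!\sim\!v'$ for $v,v'\!\in\!\ale(\ga)_\bu$ iff $u_v'$ and $u_{v'}'$ agree up to reparametrization; this equivalence is automatically $\si$-invariant. Let $\ale(\ga',\ov\ga)_\bu$ be the set of classes, $\ka$ the quotient map, and $\vr$ the induced multiplicity (absorbing reparametrization factors when grouping); the degree identity $\vr(v)\fd'(\ka(v))\!=\!\fd(v)$ in~\eref{degprod_e} is immediate from $\vr(v)[u_v'(\Si_v')]\!=\![u_v(\Si_v)]$. I build $\ga'$ by retaining $\ov\Ver$ with the original degrees $\fd|_{\ov\Ver}$, adding a vertex per class in $\ale(\ga',\ov\ga)_\bu$, and inserting zero-degree ghost vertices $\ale(\ga',\ov\ga)_0$ to realize the reduced nodal topology. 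I define $\vp$ by sending each marked point on a ghost vertex to the flag of $\ov\Ver$ through which its ghost chain attaches, yielding a $(\si,\si_{l;k})$-equivariant injection into $\sF_{\ga'}^*(\ov\Ver)\!-\!\ov\Fl$; the genus identity in~\eref{redgcond_e} then follows from an Euler-characteristic comparison of the underlying nodal surfaces before and after simplification.

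The resulting $\u'$ lies in $\cZ_{\ga',\vp}^*(J,\nu)$ by simplicity of the $u_v'$ and distinctness of their images on $\ale(\ga',\ov\ga)_\bu$, and $\st\!\times\!\ev$ is preserved because the stabilized domain and the images of the original marked points are unchanged. The main obstacle will be the combinatorial real-equivariant assembly of $\ga'$ and $\vp$: verifying that the ghost chains can be chosen $\si$-equivariantly, that all the conditions in~\eref{gared_e1} (in particular $\sF_{\ga'}^{\circ}(\ale(\ga',\ov\ga)_0)\!\subset\!\sF_{\ga'}(\ale(\ga',\ov\ga))$) hold simultaneously, and that the real factorizations on bubble components patch correctly across conjugate pairs. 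The analytic content (simple factorization of $J$-holomorphic spheres) is classical; the care is needed in the $\si$-equivariant combinatorial bookkeeping.
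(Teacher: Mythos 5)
Your plan matches the paper's proof in all essentials: leave the $\ov\Ver$-components and the original ghost components untouched, pass each non-constant contracted component through the simple-factorization theorem \cite[Proposition~2.5.1]{MS}, quotient the resulting simple factors by coincidence of image in $X$, record $\vr$ and $\ka$ from the degrees of the branched covers and the quotient map, and then insert additional zero-degree bubbles where needed. The construction of $\si'$ on $\ale(\ga',\ov\ga)_\bu$ via $\phi(u_{v_1}(\P^1))\!=\!u_{v_2}(\P^1)$ and the choice $\vp\!=\!\vp_\ga$ are also the paper's moves. So the approach is the same, and you correctly flag the real-equivariant combinatorics as the part demanding care.

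Two small points where your sketch should be tightened to avoid genuine gaps. First, you say the ghost vertices are inserted ``to realize the reduced nodal topology,'' but the precise mechanism is worth stating: after replacing each $u_v$, $v\!\in\!\ka^{-1}(v')$, by the single simple map $u_{v'}'$, the preimages $\ch z_f'$ in $\Si_{v'}'\!\approx\!\P^1$ of the nodal and marked values $u_v(z_f)$ (ranging over all $v\!\in\!\ka^{-1}(v')$ and $f\!\in\!\ve^{-1}(v)$) may \emph{collide}; each such collision point with multiplicity at least two forces a fresh genus-zero ghost bubble $w$, attached to $v'$ by a new edge $\{f_w^-,f_w^+\}$, and this is exactly where the set $\sV_0$ and $|\sF_0|\!=\!2|\sV_0|$ in the second identity of~\eref{redgcond_e} come from. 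The $\si$-equivariance of the bubble insertion follows from the $\si$-equivariant choice of the $\ch z_f'$, which your step using $\phi\!\circ\!u_{v'}'\!=\!u_{v'}'\!\circ\!\si_{v'}'$ does make possible. Second, your uniqueness argument for the real structure on $\Si_{v'}'$ compares $(u_v',h_v)$ with $(\phi\!\circ\!u_v',h_v\!\circ\!\si_v)$, but $\phi\!\circ\!u_v'$ is anti-$J$-holomorphic, not $J$-holomorphic; the competitor simple $J$-holomorphic map is $\phi\!\circ\!u_{v'}'\!\circ\!\tau_1$, and the resulting reparametrization $h\!\in\!\Aut(\P^1)$ gives $\si_{v'}'\!=\!h\!\circ\!\tau_1^{-1}$, which one then normalizes to $\tau$ or $\eta$. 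Finally, to certify $\u'\!\in\!\cZ_{\ga',\vp}^*(J,\nu)$ one must still check that the restriction of $\nu_{\ga',\vp;\rho'}$ to the retained components $\Si_v$, $v\!\in\!\ov\Ver$, agrees with that of $\nu_{\ga,\vp_\ga;\rho}$, i.e.\ that $q_{\ga',\vp;v}\!=\!q_{\ga,\vp_\ga;v}$; this is precisely what the choice $\vp\!=\!\vp_\ga$ and~\eref{mapred_e3} guarantee, and is a step your sketch implicitly relies on when asserting $\st\!\times\!\ev$ is preserved.
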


\vspace{.2in}

\noindent
Let $\ga$, $\ov\ga$, and $\u$ be as in~\eref{gadfn_e}, 
\eref{stgadfn_e}, and~\eref{Jnumap_e5}, respectively.
We construct $(\ga',\vp)$, with $\ga'$ as in~\eref{gaprdfn_e},
$$\u'\equiv \big(u'\!:\Si'\!\lra\!X,(z_f')_{f\in S_{l;k}},\si',\fJ'\big)\\
\equiv \big(u_v'\!:\Si_v'\!\lra\!X,(z_f')_{f\in\ve'^{-1}(v)},\si_v',\fJ'\big)_{v\in\Ver'}
\in\fM_{\ga',\vp}^*(J,\nu),$$
and associated maps~\eref{redmapcond_e} explicitly below.
The image of this element~$\u'$ of $\fM_{\ga',\vp}^*(J,\nu)$ 
under the projection to the first component in~\eref{fMcZ_e2} 
and~\eref{cZfib_e2} is a desired element of~$\cZ_{\ga',\vp}^*(J,\nu)$.
The map~$\u'$ keeps the irreducible components~$u_v$ of~$\u$ with their marked
and nodal points that either correspond to the domains~$\Si_v$ preserved
by the stabilization~\eref{qgarhodfn_e} or are constant maps to~$X$. 
These components are indexed by the vertices~$v$ in $\ov\Ver\!\sqcup\!\ale(\ga)_0$;
the remaining irreducible components of~$\u$
are indexed by the vertices~$v$ in $\ale(\ga)_{\bu}$.\\

\noindent
We replace each $u_v$ with $v\!\in\!\ale(\ga)_{\bu}$ by simple map $u_v'$
with the same image and every set of such maps that have the same image in~$X$
by a single simple map~$u_{v'}'$.
The collection of maps~$u_{v'}'$ obtained in this way is indexed by the vertices~$v'$
in the set $\ale(\ga',\ov\ga)_{\bu}$ below, which is thus a quotient of $\ale(\ga)_{\bu}$.
This two-step replacement may send distinct marked and nodal points~$z_f$ 
of possibly different components~$u_v$ of~$\u$ into the same point of
the domain $\Si_{v'}\!\approx\!\P^1$ of~$u_{v'}'$.
We resolve such an accumulation by adding an extra contracted bubble $\Si_{v''}\!\approx\!\P^1$.
The collection of the extra bubbles is indexed by the set~$\sV_0$ below.\\

\noindent 
The description above identifies the flags $\Fl$ of~$\ga$ with 
the subset of the flags~$\Fl'$ not forming the edges $e\!=\!\{v',v''\}$
as in the previous paragraph.
We identify these flags into nodes of~$\u'$ in the same way as for~$\u$.
This procedure preserves the evaluation maps at the marked points.
Since the irreducible components~$u_v$ and~$u_v'$ with their marked points 
are the same whenever $v\!\in\!\ov\Ver$, $\u'$ remembers~$\st(\u)$.\\

\noindent
An analogue of this procedure in the complex case without the resolution step
is sketched after \cite[Definition~3.10]{RT2}.
A similar procedure with $g\!=\!0$ and $\nu\!=\!0$ is described in the proof of
\cite[Proposition~6.1.2]{MS}.
The tuple of maps produced by our two-step replacement procedure is in fact 
a real analogue of reduced GU-map of \cite[Definition~3.10]{RT2} and
is a desired element of $\cZ_{\ga',\vp}^*(J,\nu)$.
The resolution step demonstrates that this tuple is indeed an element of 
$\cZ_{\ga',\vp}^*(J,\nu)$ by producing an associated stable map 
in~$\fM_{\ga',\vp}^*(J,\nu)$.

\begin{proof}[{\bf{\emph{Proof of Lemma~\ref{mapred_lmm}}}}]
By definition,
\begin{gather*}
\Ver=\ov\Ver\sqcup\ale(\ga)_{\bu}\sqcup\ale(\ga)_0, \qquad
\Ver'=\ov\Ver\sqcup\ale(\ga',\ov\ga)_{\bu}\sqcup\ale(\ga',\ov\ga)_0, \\
(\fg',\fd')\big|_{\ov\Ver}=(\fg,\fd)|_{\ov\Ver}, \quad
\fg'|_{\ale(\ga',\ov\ga)_{\bu}}=0, \quad
(\fg',\fd')\big|_{\ale(\ga',\ov\ga)_0}=(0,0), \quad
\si'\big|_{\ov\Ver}=\si\big|_{\ov\Ver}\,.
\end{gather*}
We produce $\ga'$ so that 
\begin{gather*}
\ale(\ga',\ov\ga)_0=\ale(\ga)_0\!\sqcup\!\sV_0, \quad
\Fl'=\Fl\!\sqcup\!\sF_0, \quad |\sF_0|=2|\sV_0|, \quad
\vt'|_{\Fl}=\vt, \quad \si'|_{\ale(\ga)_0\sqcup\Fl}=\si|_{\ale(\ga)_0\sqcup\Fl},\\
\ve'\big|_{(S_{l;k}\sqcup\Fl)-\ve^{-1}(\ale(\ga)_{\bu})}
=\ve\big|_{(S_{l;k}\sqcup\Fl)-\ve^{-1}(\ale(\ga)_{\bu})}, \quad 
\ve'\big(\ve^{-1}(\ale(\ga)_{\bu})\!\sqcup\!\sF_0\big)\subset 
\ale(\ga',\ov\ga)_{\bu}\!\sqcup\!\sV_0,
\end{gather*}
for some finite (possibly empty) sets $\sV_0$ of additional degree~0 vertices
and $\sF_0$ of additional flags.
Along with the surjectivity of~$\ka$, the condition $|\sF_0|\!=\!2|\sV_0|$ implies
the second property in~\eref{redgcond_e}.\\ 

\noindent
For each $v\!\in\!\ov\Ver\!\sqcup\!\ale(\ga)_0$, define
\BE{mapred_e3} u_v'\!=\!u_v\!:\Si_v\lra X, \qquad z_f'=z_f~\forall\,
f\!\in\!\ve'^{-1}(v)\!=\!\ve^{-1}(v).\EE
Thus, the components $u_v$ and $u_v'$ of~$\u$ and~$\u'$, respectively, corresponding
to each element~$v$ of $\ov\Ver\!\sqcup\!\ale(\ga)_0$ are the same and 
carry the same marked and nodal points.\\

\noindent
For each $v\!\in\!\ale(\ga)_{\bu}$, there exist a branched cover
$h_v\!:\P^1\!\lra\!\P^1$ and a simple $J$-holomorphic map $u_v'\!:\P^1\!\lra\!X$ 
such that $u_v\!=\!u_v'\!\circ\!h_v$;  see \cite[Proposition~2.5.1]{MS}.
If $u_v''$ is another simple $J$-holomorphic map such that 
$u_v''(\P^1)\!=\!u_v'(\P^1)$, then \hbox{$u_v''\!=\!u_v'\!\circ\!h$}
for some $h\!\in\!\Aut(\P^1)$.
Let
$$\ka\!:\ale(\ga)_{\bu}\lra 
\ale(\ga',\ov\ga)_{\bu}\!\equiv\!\ale(\ga)_{\bu}\big/\!\!\sim,\quad
v_1\sim v_2~~\hbox{if}~~u_{v_1}(\P^1)\!=\!u_{v_2}(\P^1),$$
be the quotient map.
Define
\begin{alignat*}{2}
\vr\!:\ale(\ga)_{\bu}&\lra\Z^+, &\qquad \vr(v)&=\deg h_v, \\
\fd'\!:\ale(\ga',\ov\ga)_{\bu}&\lra H_2(X;\Z), &\qquad 
\fd'(v')&=\big\{u_{v*}'\big\}_*[\P^1]~~\hbox{if}~\ka(v)\!=\!v'\,.
\end{alignat*}
These two maps are well-defined, i.e.~independent of the choices of $h_v$, $u_v'$,
and \hbox{$v\!\in\!\ka^{-1}(v')$}, and satisfy~\eref{degprod_e}.
The condition 
$$\phi\big(u_{v_1}(\P^1)\big)=u_{v_2}\big(\P^1\big) 
\qquad\forall~v_1\!\in\!\ka^{-1}(v'),\,v_2\!\in\!\ka^{-1}\big(\si'(v')),\,
v'\!\in\!\ale(\ga',\ov\ga)_{\bu}$$
determines an involution~$\si'$ on~$\ale(\ga',\ov\ga)_{\bu}$.\\

\noindent
For each $v'\!\in\!\ale(\ga',\ov\ga)_{\bu}$, we pick $v\!\in\!\ka^{-1}(v')$ 
and first set 
$$u_{v'}'\!=\!u_v'\!:\P^1\lra X\,.$$
For each $v'\!\in\!\ale(\ga',\ov\ga)_{\bu}$, the map $\phi\!\circ\!u_{v'}'\!\circ\!\tau_1$
is simple and $J$-holomorphic and has the same image as~$u_{\si'(v')}'$.
If $\si'(v')\!\neq\!v'$, we can thus assume that 
\BE{mapred_e7a}u_{\si'(v')}'\!=\!\phi\!\circ\!u_{v'}'\!\circ\!\tau_1\!:\P^1\lra X\,.\EE
If $\si'(v')\!=\!v'$, the involution~$\phi$ on $u_{v'}'(\P^1)$ determines 
a anti-holomorphic involution~$\si_{v'}'$ on~$\P^1$ such~that
\BE{mapred_e7b}\phi\circ u_{v'}'= u_{v'}'\circ \si_{v'}'\,.\EE
By replacing $u_{v'}'$ with  $u_{v'}'\!\circ\!h$ for some $h\!\in\!\Aut(\P^1)$,
we can assume that $\si_{v'}'\!\in\!\{\tau,\eta\}$.
We set $\si_{v'}'\!=\!\tau_1$ if $\si'(v')\!\neq\!v'$.\\

\noindent
For all $v'\!\in\!\ale(\ga',\ov\ga)_{\bu}$ and $v\!\in\!\ka^{-1}(v')$,
$u_{v'}'(\P^1)\!=\!u_v(\P^1)$.
For every \hbox{$f\!\in\!\ve^{-1}(v)$}, 
there thus exists \hbox{$\ch{z}_f'\!\in\!\P^1$}
such that $u_{v'}'(\ch{z}_f')\!=\!u_v(z_f)$.
Since
$$u_{\si'(v')}'\big(\ch{z}_{\si(f)}'\big) 
=u_{\si(v)}(z_{\si(f)})=\phi\big(u_v(z_f)\big)
=\phi\big(u_{v'}'(\ch{z}_f')\big)
=u_{\si'(v')}'\big(\si_{v'}'(\ch{z}_f')\big),$$ 
these points $\ch{z}_f'$ can be chosen so~that
\BE{mapred_e15}
\ch{z}_{\si(f)}'=\si_{v'}'\big(\ch{z}_f'\big)
\qquad\forall~f\!\in\!\ve^{-1}(v),\,v\!\in\!\ka^{-1}(v'),\,
v'\!\in\!\ale(\ga',\ov\ga)_{\bu}.\EE

\vspace{.2in}

\noindent
For each $v'\!\in\!\ale(\ga',\ov\ga)_{\bu}$, let
$$\ve^{-1}\big(\ka^{-1}(v')\big)=\bigsqcup_{w\in\sV_{0;v'}}\!\!\!\!\sF_w$$
be the decomposition so that 
\begin{alignat*}{2}
\ch{z}_{f_1}'&=\ch{z}_{f_2}' &\quad&\hbox{if}~~f_1,f_2\!\in\!\sF_w,~w\!\in\!\sV_{0;v'},\\
\ch{z}_{f_1}'&\neq\ch{z}_{f_2}' &\quad&\hbox{if}~~f_1\!\in\!\sF_{w_1},~f_2\!\in\!\sF_{w_2},~
w_1,w_2\!\in\!\sV_{0;v'},~w_1\!\neq\!w_2.
\end{alignat*}
By~\eref{mapred_e15}, there is an involution $\si'$ 
$$\si'\!:\sV_0\equiv\bigsqcup_{v'\in\ale(\ga',\ov\ga)_{\bu}}\!\!\!\!\!\!\!\!\!
\big\{w\!\in\!\sV_{0;v'}\!:|\sF_w|\!\ge\!2\big\}\lra \sV_0$$
such that $\si(f)\!\in\!\sF_{\si'(w)}$ for all $f\!\in\!\sF_w$ and
$w\!\in\!\sV_0$.\\

\noindent
Define 
\begin{gather*}
\si'\!:\sF_0\!\equiv\!\bigsqcup_{w\in\sV_0}
\!\!\!\big\{f_w^-,f_w^+\big\}\lra \sF_0, \qquad
\si'\big(f_w^{\pm}\big)=f_{\si'(w)}^{\pm}\,,\\
\ve'\!:\sF_0\!\sqcup\!\ve^{-1}(\ale(\ga)_{\bu})\lra\Ver', \quad
\ve'(f)=\begin{cases}v',&\hbox{if}~w\!\in\!\sV_{0;v'},
f\!=\!f_w^-~\hbox{or}~f\!\in\!\sF_w,\,|\sF_w|\!=\!1;\\
w,&\hbox{if}~w\!\in\!\sV_{0;v'},
f\!=\!f_w^+~\hbox{or}~f\!\in\!\sF_w,\,|\sF_w|\!\ge\!2.
\end{cases}
\end{gather*}
This completes the specification of $\ga'$.
We take 
$$\vp\!=\!\vp_{\ga}\!:S_{l;k}\!\cap\!\ve'^{-1}\big(\ale(\ga',\ov\ga)\big)\!=\!
S_{l;k}\!\cap\!\ve^{-1}\big(\ale(\ga)\big)
\lra \sF_{\ga}^*(\ov\Ver)\!-\!\ov\Fl\!=\!\sF_{\ga'}^*(\ov\Ver)\!-\!\ov\Fl$$
to be the injective map corresponding to the contraction of~$\ga$ to~$\ov\ga$.
This defines \hbox{$(\ga',\vp)\!\in\!\cA(\ga)$}.\\

\noindent
Let $v'\!\in\!\ale(\ga',\ov\ga)_{\bu}$ and $w\!\in\!\sV_{0;v'}$.
If $\sF_w\!=\!\{f\}$ consists of a single element, we take 
$$z_f'=\ch{z}_f'\in \Si_{\ve'(f)}\equiv\Si_{v'}=\P^1\,.$$
If $|\sF_w|\!\ge\!2$ and $f\!\in\!\sF_w$, we take
\begin{gather*}
z_{f_w^-}'=\ch{z}_f'\in \Si_{\ve'(f_w^-)}\equiv \Si_{v'}=\P^1, \qquad
z_{f_w^+}'=1\in \Si_{\ve'(f_w^+)}\equiv \Si_w=\P^1, \\
u_w'\!:\Si_w\lra X,~u_w'(z)=u_{v'}'(\ch{z}_f') \quad\forall\,z\!\in\!\Si_w,\\
\si_w'\!=\!\tau\!:\Si_w\lra\Si_w~~\hbox{if}~\si'(w)\!=\!w, \quad
\si_w'\!=\!\tau_1\!:\Si_w\lra\Si_{\si'(w)}~~\hbox{if}~\si'(w)\!\neq\!w\,.
\end{gather*}
These definitions are independent of the choice of $f\!\in\!\sF_w$.
In this case, we also choose distinct~points 
$$z_f'\in \Si_w\!-\!\{1\}\subset\Si_{\ve'(f)},
\qquad f\!\in\!\sF_w\,.$$ 
They can be chosen so~that
$$\si_w'\big(z_f'\big)=z_{\si(f)}'
\qquad\forall~f\!\in\!\sF_w,\,w\!\in\!\sV_0\,.$$

\vspace{.2in}

\noindent
The maps $u_{v'}\!:\P^1\!\lra\!X$ with $v'\!\in\!\ale(\ga',\ov\ga)_{\bu}$ 
such that $|\ve'^{-1}(v')|\!\le\!2$ can be reparametrized to
achieve~\eref{nodecondC_e} and~\eref{nodecondR_e}
with $z_f,\ga,\ve$ replaced by $z_f',\ga',\ve'$,
while preserving the conditions~\eref{mapred_e7a} and~\eref{mapred_e7b}.
This completes the specification of a map~$\u'$  with dual graph~$\ga'$ that 
satisfies~\eref{nodecondC_e} and~\eref{nodecondR_e}.
Since 
\begin{gather*}
u_{\ve'(f)}'(z_f')=u_{\ve(f)}(z_f)=u_{\ve(\vt(f))}\big(z_{\vt(f)}\big)
=u_{\ve'(\vt'(f))}'\big(z_{\vt'(f)}'\big) \quad\forall\,f\!\in\!\Fl, \\
u_{v'}'\big(z_{f_w^-}'\big)=u_w'\big(z_{f_w^+}'\big)\quad
\forall~w\!\in\!\sV_0\,,
\end{gather*}
this map is continuous at the nodes.
Furthermore,
\BE{mapred_e25} u_{\ve'(f)}'(z_f')=u_{\ve(f)}(z_f) \qquad\forall~f\!\in\!S_{l;k}\,.\EE

\vspace{.2in}

\noindent
Let $\rho'\!\in\!\cA(\ga')$ be given by $\rho'(v')\!=\!\si_{v'}'$  
whenever $v'\!\in\!\ale(\ga',\ov\ga)_{\bu}$, $\si'(v')\!=\!v'$, and 
\hbox{$|\ve'^{-1}(v')|\!\le\!2$}.
By~\eref{mapred_e3} and the choice of~$\vp$, 
\BE{mapred_e27}\begin{split}
q_{\ga',\vp;v}\!=\!q_{\ga,\vp_{\ga};v}\!: \Si_v&\lra \R\wt\cU_{g,l;k}
\quad\forall~v\!\in\!\nV_{\R}(\ov\ga),\\
q_{\ga',\vp;v}^{\bu}\!=\!q_{\ga,\vp_{\ga};v}^{\bu}\!: \Si_v&\lra \R\wt\cU_{g,l;k}
\quad\forall~v\!\in\!\nV_{\C}(\ov\ga).
\end{split}\EE
Thus,
$$\nu_{\ga,\vp';\rho'}\big|_{\Si_v}=\nu_{\ga,\vp;\rho}|_{\Si_v},\quad
\dbar_{J,\fJ}u_v'\big|_z=\dbar_{J,\fJ}u_v\big|_z
=\nu_{\ga,\vp;\rho}\big(z,u_v(z)\big)
=\nu_{\ga',\vp;\rho'}\big(z,u_v'(z)\big)
~~\forall\,z\!\in\!\Si_v$$
for all $v\!\in\!\Ver'$.
By the construction, $u_{v'}'$ is a simple map for every $v'\!\in\!\ale(\ga',\ov\ga)_{\bu}$
and 
$$u_{v_1'}'(\P^1)\neq u_{v_2'}'(\P^1) \qquad
\forall~v_1',v_2'\!\in\!\ale(\ga',\ov\ga)_{\bu},~v_1'\!\neq\!v_2'.$$
Thus, $\u'\!\in\!\wt\fM_{\ga',\vp;\rho'}^*(J,\nu)$.
By~\eref{mapred_e25} and~\eref{mapred_e27},  
$[\u']$ satisfies the condition in~\eref{mapred_e0}.
\end{proof}

\noindent
Let $\bI\!=\![0,1]$.
For $(J,\nu)$ and $(J',\nu')$ in $\cH_{g,l;k}^{\om,\phi}(X)$, 
define
$$\sP(J,\nu;J',\nu') =
\big\{\al\!:[0,1]\!\lra\!\cH_{g,l;k}^{\om,\phi}(X)\!:
\al(0)\!=\!(J,\nu),~\al(1)\!=\!(J',\nu')\big\}$$
to be the space of paths from $(J,\nu)$ and $(J',\nu')$. 
For any such path~$\al$, let 
\begin{equation*}\begin{split}
\ov\fM_{g,l;k}(B;\al)^{\phi}&=\big\{\big(t,[\u]\big)\!:t\!\in\!\bI,\,
[\u]\!\in\!\ov\fM_{g,l;k}\big(X,B;\al(t)\big)^{\phi}\big\},\\
\fM_{g,l;k}^{\star}(B;\al)^{\phi}&=\big\{\big(t,[\u]\big)\!:t\!\in\!\bI,\,
[\u]\!\in\!\fM_{g,l;k}^{\star}\big(X,B;\al(t)\big)^{\phi}\big\}.
\end{split}\end{equation*}
If in addition $\ov\ga\!\in\!\cA_{g,l;k}$ and $(\ga,\vp)\!\in\!\cA(\ov\ga)$, let 
$$\cZ_{\ga,\vp}^*(\al)=\big\{\big(t,[\u]\big)\!:t\!\in\!\bI,\,
[\u]\!\in\!\cZ_{\ga,\vp}^*\big(\al(t)\big)\big\}\,.$$
These spaces are again topologized as in \cite[Section~3]{LT} so that the~maps
\begin{alignat}{1}
\label{alRTreal_e}
\st\!\times\!\ev\!:\fM_{g,l;k}^{\star}(B;\al)^{\phi} &\lra 
\R\ov\cM_{g,l;k}\times \big(X^l\!\times\!(X^{\phi})^k\big),\\
\label{alredstev_e}
\st_{\ga,\vp}\!\times\!\ev\!:
 \cZ_{\ga,\vp}^*(\al) &\lra \R\ov\cM_{g,l;k}\times \big(X^l\!\times\!(X^{\phi})^k\big),
\end{alignat}
induced by~\eref{evst_e} and~\eref{redstev_e} are continuous.\\

\noindent
For \hbox{$g,l,k\!\in\!\Z^{\ge0}$} and $B\!\in\!H_2(X;\Z)$, let
\BE{RTprp_e1}  \dim_{g,l;k}(B)
=\blr{c_1(TX),B}\!+\!(n\!-\!3)(1\!-\!g)\!+\!2l\!+\!k.\EE
For $\ov\ga\!\in\!\cA_{g,l;k}$ and $(\ga,\vp)\!\in\!\cA(\ov\ga)$
with $\ga\!\in\!\cA_{g',l;k}^{\phi}(B')$, let 
\BE{RTprp_e}\begin{split}
\dim_{\ga,\vp}=\dim_{g',l;k}(B')-|\ga|
+n\,\ell\big(\ga,\ale(\ga,\ov\ga)_0\big)-
\big(\!\dim_{\R}\!\R\cM_{\ga;\ale(\ga,\ov\ga)}\!+\!
\big|\sF_{\ga}^{\dag}\!(\ale(\ga,\ov\ga)_0)\big|\big).
\end{split}\EE
Propositions~\ref{RTreg_prp} and~\ref{RTreg2_prp} 
below are  analogous to \cite[Theorem~6.2.6]{MS} and 
\cite[Theorem~3.16]{RT2}; they are established in Section~\ref{trans_sec}.
 
\begin{prp}\label{RTreg_prp}
Let  $(X,\om,\phi)$ be a compact real symplectic $2n$-manifold.
For all \hbox{$g,l,k\!\in\!\Z^{\ge0}$} with \hbox{$2(g\!+\!l)\!+\!k\!\ge\!3$}
and $B\!\in\!H_2(X;\Z)$, there exists a Baire~subset 
\BE{whcHdfn_e}\wh\cH_{g,l;k}^{\om,\phi}(X)\subset\cH_{g,l;k}^{\om,\phi}(X)\EE
of second category such~that for every $(J,\nu)\!\in\!\wh\cH_{g,l;k}^{\om,\phi}(X)$
\begin{enumerate}[label=(\arabic*),leftmargin=*]

\item\label{mainstr_it} 
$\fM_{g,l;k}^{\star}(X,B;J,\nu)^{\phi}$ is a smooth manifold of 
dimension~\eref{RTprp_e1} and the restriction of~\eref{evst_e} to
$\fM_{g,l;k}^{\star}(X,B;J,\nu)^{\phi}$
is a smooth map,

\item\label{cZstr_it}  $\cZ_{\ga,\vp}^*(J,\nu)$ is a smooth manifold of dimension~\eref{RTprp_e} 
and the restriction of~\eref{redstev_e} to $\cZ_{\ga,\vp}^*(J,\nu)$ is a smooth map 
for all \hbox{$(\ga,\vp)\!\in\!\cA(\ov\ga)$}
satisfying \hbox{$\ov\ga\!\in\!\cA_{g,l;k}$}, 
\hbox{$\ga\!\in\!\cA_{g',l;k}^{\phi}(B')$}, $B'\!\in\!H_2(X;\Z)$,
and  $\om(B')\!\le\!\om(B)$.

\end{enumerate}
\end{prp}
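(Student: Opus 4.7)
The plan is to establish both parts of the proposition via a universal moduli space construction and the Sard--Smale theorem, deferring the key transversality input to Lemmas~\ref{RTtransC_lmm} and~\ref{RTtransR_lmm} (which together yield Theorem~\ref{UfM_prp} and are proved in Section~\ref{DefObs_subs}).

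\textbf{Banach setup.} For each $(\ga,\vp)\!\in\!\cA(\ov\ga)$ with $\ga\!\in\!\cA_{g',l;k}^{\phi}(B')$, $\om(B')\!\le\!\om(B)$, and each $\rho\!\in\!\Inv(\ga)$, I introduce a Banach manifold $\cB_{\ga,\vp;\rho}$ modeling the left-hand side of~\eref{fMcZ_e}: it consists of $W^{1,p}$-maps $u$ from fibers of $\pi_{\ga;\rho}$ to~$X$ satisfying the reality condition and the degree constraints, with domain data varying over $\R\wt\cM_\ga$ and $p\!>\!2$ ensuring continuity. Over $\cB_{\ga,\vp;\rho}\!\times\!\cH_{g,l;k}^{\om,\phi}(X)$ I form the Banach bundle of $L^p$ sections of $\pi_1^*(T\Si,-\fJ)^*\!\otimes_\C\!\pi_2^*(TX,J)$ that are $(\tnd\phi,\tnd\si)$-equivariant, and treat the map $(u,J,\nu)\!\mapsto\!\dbar_{J,\fJ}u-\nu_{\ga,\vp;\rho}$ as a smooth Fredholm section.

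\textbf{Transversality and quotient.} The transversality Lemmas~\ref{RTtransC_lmm} and~\ref{RTtransR_lmm} classify the irreducible components of the domains in \eref{fMga_e2} into five types (real/conjugate, and non-contracted/contracted, with the non-contracted real components further subdivided) and show that the combined linearization of the above section in the map direction and in the $(J,\nu)$ direction is surjective on the simple locus. The zero set restricted to this open subset is therefore a smooth Banach submanifold, and the forgetful projection to $\cH_{g,l;k}^{\om,\phi}(X)$ is Fredholm. The finite-dimensional group $G_{\ga;\rho}^\circ$ acts smoothly with finite stabilizers on the simple locus; quotienting, taking the union over $\rho\!\in\!\Inv(\ga)$, and applying the fibration~\eref{cZfib_e2}, one obtains a smooth Banach orbifold $\cU\cZ_{\ga,\vp}^*$ with a Fredholm projection to $\cH_{g,l;k}^{\om,\phi}(X)$.

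\textbf{Sard--Smale, dimension count, and assembly.} The Sard--Smale theorem applied to this projection produces a Baire subset $\wh\cH_{\ga,\vp}\!\subset\!\cH_{g,l;k}^{\om,\phi}(X)$ of second category over which $\cZ_{\ga,\vp}^*(J,\nu)$ is a smooth manifold. A Riemann--Roch computation on each irreducible component (weighted appropriately by the $\phi$-equivariance), together with the contributions from the domain moduli $\R\cM_{\ga;\ale(\ga,\ov\ga)}$, the evaluation at conjugate pairs, the matching conditions at nodes (yielding the $n\,\ell(\ga,\ale(\ga,\ov\ga)_0)$ correction), and the removal of $\dim G_{\ga;\rho}^\circ$ and $|\sF_{\ga}^\dag(\ale(\ga,\ov\ga)_0)|$, reproduces~\eref{RTprp_e}. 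Intersecting the Baire sets $\wh\cH_{\ga,\vp}$ over the countable admissible collection of $(\ga,\vp)$ yields $\wh\cH_{g,l;k}^{\om,\phi}(X)$. For part~(1), $\fM_{g,l;k}^{\star}(X,B;J,\nu)^\phi$ is a finite disjoint union of strata with $|\ga|\!\le\!1$, whose top stratum has dimension~\eref{RTprp_e1}. Smoothness of the restrictions of~\eref{evst_e} and~\eref{redstev_e} to these smooth manifolds is automatic: $\ev$ is evaluation at marked points (smooth in $W^{1,p}$ for $p\!>\!2$), while $\st$ and $\st_{\ga,\vp}$ factor through the smooth projections to~$\R\wt\cM_\ga$.

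\textbf{Main obstacle.} The heart of the argument is the transversality input itself. Because $\nu$ must be $\phi$-equivariant and vanishes identically on the contracted components by construction of $\nu_{\ga,\vp;\rho}$, the available perturbations are substantially restricted. The proof of Lemmas~\ref{RTtransC_lmm} and~\ref{RTtransR_lmm} must handle the five component types separately: for non-contracted non-real components one can use variations of $\nu$ away from the diagonal to mimic the classical argument; for non-contracted real components one must exploit the $\phi$-equivariance carefully, distinguishing between those whose image lies in $X^\phi$ and those that do not; for contracted components (where $\nu\!\equiv\!0$) only variations of $J$ along the image are available, forcing a separate unique-continuation argument in the spirit of \cite{MS}. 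Once these lemmas are in place, the remainder is a fairly standard real adaptation of the universal-moduli paradigm of \cite[Chapter~6]{MS} and \cite[Section~3]{RT2}.
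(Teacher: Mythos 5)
Your proposal follows essentially the same route as the paper: a universal moduli space over the parameter space $\cH_{g,l;k}^{\om,\phi}(X)$, transversality via the component-by-component Lemmas~\ref{RTtransC_lmm} and~\ref{RTtransR_lmm}, Sard--Smale, a dimension count, and an intersection of Baire sets over the admissible pairs $(\ga,\vp)$. You correctly identify the essential transversality input and its case division — variations of $J$ for the components contracted by $u_\cM$ of nonzero degree, variations of~$\nu$ for the non-contracted ones, and the trivial obstruction for the degree-zero contracted components.

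There is, however, a genuine gap in your treatment of part~\ref{mainstr_it}. You assert that $\fM_{g,l;k}^{\star}(X,B;J,\nu)^\phi$ ``is a finite disjoint union of strata with $|\ga|\le 1$, whose top stratum has dimension~\eref{RTprp_e1}.'' Asserting a stratified decomposition whose top stratum has the right dimension does not produce a smooth manifold: the one-nodal strata $\fM_\ga^*(J,\nu)$ with $|\ga|=1$ are codimension-one walls in the closure of the top stratum, and making the union a manifold requires charts covering neighborhoods of these walls. The paper supplies these via a gluing construction: for each $\ga$ with $|\ga|=1$, a map $\Phi_\ga$ from a neighborhood $\cN_\ga'$ of the zero section in the real rank-$1$ bundle of smoothing parameters over $\fM_\ga^*(J,\nu)$, restricting to the identity on the zero section and to a diffeomorphism from its complement onto an open subset of $\fM_{\ga_0}(J,\nu)$. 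These maps extend the natural smooth structure on the top stratum across the walls. You should invoke such a gluing theorem (e.g.\ \cite[Section~3]{LT} or \cite[Chapter~10]{MS}) and note that it restricts to the real setting; this step does not follow from Sard--Smale alone.

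Two smaller inaccuracies worth flagging. First, your ``five types \dots{} with the non-contracted real components further subdivided'' does not match the paper's classification: the four cases needing a nontrivial deformation-obstruction setup are the product of (real vs.\ conjugate pair) and (contracted of nonzero degree vs.\ non-contracted), and the fifth case is the degree-zero contracted components, where the obstruction space vanishes by construction. The non-contracted real components are not subdivided. Second, on the simple locus the action of $G_{\ga;\rho}^\circ$ is free, not merely of finite stabilizers — freeness is what makes $\cZ_{\ga,\vp}^*(J,\nu)$ a manifold, as the proposition asserts, rather than an orbifold.
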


\begin{prp}\label{RTreg2_prp}
Let  $(X,\om,\phi)$,  $g,l,k$, $B$, and $\wh\cH_{g,l;k}^{\om,\phi}(X)$ be as
in Proposition~\ref{RTreg_prp}.
For all elements $(J,\nu)$ and $(J',\nu')$  of $\wh\cH_{g,l;k}^{\om,\phi}(X)$,
there exists a Baire subset 
\BE{whsPdfn_e}\wh\sP(J,\nu;J',\nu')\!\subset\!\sP(J,\nu;J',\nu')\EE
of second category such~that for every $\al\!\in\!\wh\sP(J,\nu;J',\nu')$
\begin{enumerate}[label=(\arabic*),leftmargin=*]

\item\label{mainstr2_it} $\fM_{g,l;k}^{\star}(\al)^{\phi}$ is a smooth manifold with boundary 
of dimension $\dim_{g,l;k}(B)\!+\!1$ and~\eref{alRTreal_e} is a smooth map,

\item $\cZ_{\ga,\vp}^*(\al)$ is a smooth manifold with boundary of
dimension $\dim_{\ga,\vp}\!+\!1$ and~\eref{alredstev_e} is a smooth map
for all \hbox{$(\ga,\vp)\!\in\!\cA(\ov\ga)$}
satisfying $\ov\ga\!\in\!\cA_{g,l;k}$, \hbox{$\ga\!\in\!\cA_{g',l;k}^{\phi}(B')$},
$B'\!\in\!H_2(X;\Z)$, and  \hbox{$\om(B')\!\le\!\om(B)$}.

\end{enumerate}
\end{prp}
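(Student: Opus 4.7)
The strategy is a one-parameter extension of the argument for Proposition~\ref{RTreg_prp}: treat the path parameter $t\!\in\!\bI$ as an additional variable, build a parametric universal moduli space, and apply the Sard-Smale theorem to the projection onto paths. First I would introduce, for each admissible $(\ga,\vp)$ with $\ga\!\in\!\cA_{g',l;k}^{\phi}(B')$ and $\om(B')\!\le\!\om(B)$, the parametric universal space
$$\cU_{\ga,\vp}^*(J,\nu;J',\nu')=\big\{(t,\al,\u)\!:\al\!\in\!\sP(J,\nu;J',\nu'),\,t\!\in\!\bI,\,\u\!\in\!\cZ_{\ga,\vp}^*\!\big(\al(t)\!\big)\big\},$$
with an analogous parametric space built from $\fM_{g,l;k}^{\star}$ for the top stratum.

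Next I would show that $\cU_{\ga,\vp}^*(J,\nu;J',\nu')$ is a smooth Banach manifold. The linearization of the defining $(\dbar_J\!-\!\nu)$-equation at a triple $(t,\al,\u)$ acts on variations $(\de t,\de\al,\de\u)$; holding $\de t$ and $\de\u$ fixed, the $\de\al$ already encompass arbitrary perturbations of $\nu$ at time~$t$, and by Lemmas~\ref{RTtransC_lmm} and~\ref{RTtransR_lmm} these alone surject onto the cokernel of the fixed-$t$ linearization. The implicit function theorem then gives the Banach manifold structure, and the projection
$$\Pi_{\ga,\vp}\!:\cU_{\ga,\vp}^*(J,\nu;J',\nu')\lra\sP(J,\nu;J',\nu'),\quad(t,\al,\u)\lra\al,$$
is Fredholm of index $\dim_{\ga,\vp}\!+\!1$, the extra~$1$ coming from the $t$-direction. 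Sard-Smale supplies a Baire subset of regular values of $\Pi_{\ga,\vp}$; intersecting over the countable collection of eligible $(\ga,\vp)$ (finite for each of the countably many $B'$ with $\om(B')\!\le\!\om(B)$, by Gromov compactness) and likewise for the top-stratum projection produces the claimed $\wh\sP(J,\nu;J',\nu')$. For $\al$ in this subset, $\cZ_{\ga,\vp}^*(\al)\!=\!\Pi_{\ga,\vp}^{-1}(\al)$ and $\fM_{g,l;k}^{\star}(\al)^{\phi}$ are smooth manifolds of the asserted dimensions, and the maps \eref{alRTreal_e} and \eref{alredstev_e} are restrictions to these fibers of the evidently smooth evaluation-stabilization maps on the universal spaces.

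The boundary structure at $t\!\in\!\{0,1\}$ is inherited directly from Proposition~\ref{RTreg_prp}: since $(J,\nu),(J',\nu')\!\in\!\wh\cH_{g,l;k}^{\om,\phi}(X)$ by hypothesis, the slices $\cZ_{\ga,\vp}^*(J,\nu)$ and $\cZ_{\ga,\vp}^*(J',\nu')$ are themselves smooth of one lower dimension, giving the manifold-with-boundary model in a neighborhood of each endpoint. The main obstacle is verifying the parametric surjectivity at the strata corresponding to maps with contracted components: the five-fold classification of irreducible-component types developed in Section~\ref{RTstr_subs} (contracted vs.\ non-contracted, real vs.\ conjugate, with an additional split among real components) must be carried through the parametric argument exactly as in the fixed-$(J,\nu)$ setting, with the extra care that $\nu$ is supported away from the nodes in each slice and that no variation in the $t$-direction is needed for surjectivity. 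A secondary point is the countable intersection of Baire subsets over combinatorial types; the energy bound $\om(B')\!\le\!\om(B)$ keeps this intersection countable, which is standard.
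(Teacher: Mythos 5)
Your proposal is correct and follows essentially the route the paper intends: the paper proves Proposition~\ref{RTreg_prp} via the universal moduli space of Theorem~\ref{UfM_prp}, the transversality Lemmas~\ref{RTtransC_lmm} and~\ref{RTtransR_lmm}, and Sard-Smale, and then simply states that ``the proof of Proposition~\ref{RTreg2_prp} is similar,'' meaning exactly the parametric version you describe (universal space over $\bI\times\sP(J,\nu;J',\nu')$, index raised by one, regularity of the fixed endpoints supplying surjectivity and the boundary structure at $t=0,1$). Your handling of the endpoint constraint $\de\al(0)=\de\al(1)=0$ via the hypothesis $(J,\nu),(J',\nu')\in\wh\cH_{g,l;k}^{\om,\phi}(X)$, and of the countable intersection over combinatorial types, matches the standard argument the paper leaves implicit.
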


\begin{crl}\label{RTorient_crl}
Let $n\!\not\in\!2\Z$ and $(X,\om,\phi)$ be a compact  real symplectic $2n$-manifold 
endowed with a real orientation.
For all  $g,l\!\in\!\Z^{\ge0}$ with $g\!+\!l\!\ge\!2$ and $B\!\in\!H_2(X;\Z)$,
there exists a Baire subset \hbox{$\wh\cH_{g,l}^{\om,\phi}(X)\!\subset\!\cH_{g,l;0}^{\om,\phi}(X)$}
of second category such~that  $\fM_{g,l;0}^{\star}(X,B;J,\nu)^{\phi}$ is an oriented manifold of
dimension~\eref{RTreal_e2}.
\end{crl}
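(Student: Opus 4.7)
The plan is to take $\wh\cH_{g,l}^{\om,\phi}(X)$ to be the Baire subset $\wh\cH_{g,l;0}^{\om,\phi}(X)$ furnished by Proposition~\ref{RTreg_prp} (possibly intersected with one more Baire subset used below to ensure smooth extension across nodal strata). For this choice of Baire set, Proposition~\ref{RTreg_prp}\ref{mainstr_it} immediately yields that $\fM_{g,l;0}^{\star}(X,B;J,\nu)^{\phi}$ is a smooth manifold of the asserted dimension~\eqref{RTreal_e2}, together with the auxiliary smoothness of $\cZ_{\ga,\vp}^*(J,\nu)$ for the codimension-one boundary strata corresponding to pairs $(\ga,\vp)\!\in\!\cA(\ov\ga)$ with $|\ga|\!=\!1$. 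All that remains is to orient this manifold coherently.

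The key linear-algebraic observation is that, at a simple $(J,\nu)$-map $\u$ with smooth domain, the tangent space to $\fM_{g,l;0}^{\star}(X,B;J,\nu)^{\phi}$ is naturally identified with the index of a real Fredholm operator $\wt D_{\u}$ combining the linearization $D_{\u}$ of the $(\dbar_J\!-\!\nu)$-equation on $u^*(TX,\tnd\phi)$ with the Dolbeault deformation complex of the marked real domain. Since $\nu$ enters as a $C^\infty$ zeroth-order coefficient, $D_{\u}$ is a compact perturbation of the standard real Cauchy-Riemann operator $D_{\u}^{\tn{CR}}$ on $u^*(TX,\tnd\phi)$, so the determinant line bundles of the universal families of $\wt D$ and of $\wt D^{\tn{CR}}$ over $\fM_{g,l;0}^{\star}(X,B;J,\nu)^{\phi}$ are canonically isomorphic. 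By (the proof of) \cite[Theorem~1.3]{RealGWsI}, the real orientation of $(X,\om,\phi)$ orients the determinant line bundle of the universal $\wt D^{\tn{CR}}$ whenever $n\!\not\in\!2\Z$ and there are no real marked points; this is precisely the point where the hypotheses $n$ odd and $k\!=\!0$ are used. Transporting this trivialization across the canonical isomorphism orients the top stratum of $\fM_{g,l;0}^{\star}(X,B;J,\nu)^{\phi}$.

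The remaining task is to verify that this orientation extends across the codimension-one loci in $\fM_{g,l;0}^{\star}(X,B;J,\nu)^{\phi}$, which parametrize simple $(J,\nu)$-maps from domains with exactly one node. At such a map, the linearized gluing/plumbing formula decomposes the determinant line as the tensor product of the determinant lines on the two sides of the node twisted by the one-dimensional real smoothing parameter, and the real-orientation data is built in \cite[Section~5]{RealGWsI} precisely so that these pieces assemble into a continuous global orientation for the $\dbar_J$-problem in the $k\!=\!0$, $n$ odd case; the compact-perturbation identification above carries this coherence over to the $(J,\nu)$-setting unchanged. This crossing argument is the main obstacle: the parity count underlying the coherent gluing breaks if either $k\!>\!0$ or $n$ is even, which is why the hypothesis of the corollary is stated as it is, and why the parallel statement allowing $k\!>\!0$ is deferred to the genus~$1$ argument of Remark~\ref{RealRT3_rmk} via \cite[Theorem~1.5]{RealGWsI}.
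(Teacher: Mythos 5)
Your proposal follows essentially the same two-step route as the paper's proof, which consists of a single citation of Proposition~\ref{RTreg_prp}\ref{mainstr_it} for smoothness and dimension and of \cite[Theorem~1.3]{RealGWsI} for the orientation. The compact-perturbation identification of the $(J,\nu)$-determinant line with the $\dbar_J$-determinant line and the discussion of the gluing across the one-nodal stratum are correct elaborations of what that citation implicitly packages, not a genuinely different argument.
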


\begin{proof}
By Proposition~\ref{RTreg_prp}\ref{mainstr_it}, 
$\fM_{g,l;0}^{\star}(X,B;J,\nu)^{\phi}$ is a smooth manifold
of the expected dimension.
By \cite[Theorem~1.3]{RealGWsI}, a real orientation $(X,\om,\phi)$ determines 
an orientation on this space.
\end{proof}

\begin{crl}\label{RTorient2_crl}
Let $n$, $(X,\om,\phi)$, $g,l$, $B$, and $\wh\cH_{g,l}^{\om,\phi}(X)$ be in
Corollary~\ref{RTorient_crl}.
For all elements $(J,\nu)$ and $(J',\nu')$ of $\wh\cH_{g,l}^{\om,\phi}(X)$,
there exists a Baire subset of second category as in~\eref{whsPdfn_e}
such~that $\fM_{g,l;0}^{\star}(B;\al)^{\phi}$ is an oriented manifold with boundary
\BE{RTorient2_e}\prt\,\fM_{g,l;0}^{\star}(B;\al)^{\phi}
=\fM_{g,l;0}^{\star}(X,B;J',\nu')^{\phi}
-\fM_{g,l;0}^{\star}(X,B;J,\nu)^{\phi}\EE
for every $\al\!\in\!\wh\sP(J,\nu;J',\nu')$.
\end{crl}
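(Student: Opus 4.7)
The plan is to combine Proposition~\ref{RTreg2_prp}\ref{mainstr2_it}, which gives the smooth structure on the parametrized moduli space, with the orientation machinery of \cite[Theorem~1.3]{RealGWsI}, which is what already produced the orientation asserted in Corollary~\ref{RTorient_crl}. First, I would fix any $(J,\nu),(J',\nu')\!\in\!\wh\cH_{g,l}^{\om,\phi}(X)$ and define $\wh\sP(J,\nu;J',\nu')$ to be the intersection of the Baire subset provided by Proposition~\ref{RTreg2_prp} with the preimage (under evaluation at the endpoints) of $\wh\cH_{g,l}^{\om,\phi}(X)\!\times\!\wh\cH_{g,l}^{\om,\phi}(X)$; this is again a Baire subset of second category. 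For $\al$ in this set, Proposition~\ref{RTreg2_prp}\ref{mainstr2_it} gives that $\fM_{g,l;0}^{\star}(B;\al)^{\phi}$ is a smooth manifold with boundary of dimension $\dim_{g,l;0}(B)\!+\!1$.

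Next, I would identify the boundary. Since $\al$ is a path parametrized by $\bI\!=\![0,1]$, the only boundary of $\fM_{g,l;0}^{\star}(B;\al)^{\phi}$ comes from the endpoints $t\!=\!0$ and $t\!=\!1$ of the interval: the fiber of the projection to $\bI$ over an interior point $t$ is the smooth manifold $\fM_{g,l;0}^{\star}(X,B;\al(t))^{\phi}$ of Corollary~\ref{RTorient_crl}, and the transversality argument of Proposition~\ref{RTreg2_prp} produces a neighborhood of the boundary fibers diffeomorphic to a half-open collar. Thus as a set,
\[
\prt\,\fM_{g,l;0}^{\star}(B;\al)^{\phi}
=\fM_{g,l;0}^{\star}(X,B;J',\nu')^{\phi}\sqcup \fM_{g,l;0}^{\star}(X,B;J,\nu)^{\phi}.
\]

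The main step is to construct the orientation. I would apply the orientation construction of \cite[Theorem~1.3]{RealGWsI} to the family of real Cauchy–Riemann operators parametrized by~$\bI$, rather than to a single element of $\cH_{g,l;0}^{\om,\phi}(X)$. Concretely, the linearization of $\{\dbar_J\!-\!\nu\}$ along~$\al$ defines a continuous family of real Fredholm problems over a connected parameter space; the determinant line bundle of this family is canonically isomorphic to that of the endpoint problems tensored with the trivial line from the~$\bI$-direction. Since the real orientation of $(X,\om,\phi)$ trivializes the determinant line bundle over the moduli space of simple maps in the parameter-free setting, the argument of \cite[Theorem~1.3]{RealGWsI} applies verbatim to the family to orient $\fM_{g,l;0}^{\star}(B;\al)^{\phi}$, using crucially that $n\!\not\in\!2\Z$ and $k\!=\!0$ so that no sign ambiguity from real marked points arises.

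The hard part is the sign check in~\eref{RTorient2_e}. One must verify that the boundary orientation induced on $\prt\,\fM_{g,l;0}^{\star}(B;\al)^{\phi}$ from the orientation of the full cobordism agrees with the orientation from Corollary~\ref{RTorient_crl} at $t\!=\!1$ and is the opposite of that orientation at $t\!=\!0$. This follows from the standard ``outward normal first'' convention for oriented boundaries once the $\bI$-direction is factored out of the family determinant line; at $t\!=\!1$ the outward normal is $+\prt_t$, giving agreement, while at $t\!=\!0$ it is $-\prt_t$, producing the minus sign. With this sign convention in place, \eref{RTorient2_e} holds as oriented manifolds, completing the proof.
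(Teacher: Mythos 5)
Your proposal is correct and follows the same route as the paper's own proof, which is a two-sentence citation of Proposition~\ref{RTreg2_prp}\ref{mainstr2_it} for the smooth manifold-with-boundary structure and boundary identification, and of the proof of \cite[Theorem~1.3]{RealGWsI} for the orientation. You spell out the boundary identification and the ``outward normal first'' sign convention, which the paper leaves implicit, and you add an unnecessary intersection step in defining $\wh\sP(J,\nu;J',\nu')$: since paths in $\sP(J,\nu;J',\nu')$ already have fixed endpoints $(J,\nu),(J',\nu')\in\wh\cH_{g,l}^{\om,\phi}(X)$, the preimage condition you impose is automatic.
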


\begin{proof}
By Proposition~\ref{RTreg2_prp}\ref{mainstr2_it}, 
$\fM_{g,l;0}^{\star}(B;\al)^{\phi}$ is a smooth manifold with boundary;
its boundary is as specified by~\eref{RTorient2_e}.
By the proof of \cite[Theorem~1.3]{RealGWsI}, a real orientation $(X,\om,\phi)$ determines 
an orientation on $\fM_{g,l;0}^{\star}(B;\al)^{\phi}$.
\end{proof}

\subsection{Proof of Theorem~\ref{RTreal_thm}}
\label{RTrealpf_subs}

\noindent
We deduce Proposition~\ref{RTdim_prp} below from Proposition~\ref{RTreg_prp}
primarily through dimension counting.
Proposition~\ref{RTdim2_prp} is obtained similarly from 
Proposition~\ref{RTreg2_prp}.
We~then combine these propositions with Corollaries~\ref{RTorient_crl} and~\ref{RTorient2_crl}
to establish Theorem~\ref{RTreal_thm}.

\begin{prp}\label{RTdim_prp}
Let $(X,\om,\phi)$ be a compact semi-positive real symplectic manifold.
For all 
\hbox{$g,l,k\!\in\!\Z^{\ge0}$} with \hbox{$2(g\!+\!l)\!+\!k\!\ge\!3$} and $B\!\in\!H_2(X;\Z)$,
there exists a Baire subset of second category as in~\eref{whcHdfn_e} such~that
\BE{RTcrl_e2}
\dim \big\{\st\!\times\!\ev\big\}
\big(\ov\fM_{g,l;k}(X,B;J,\nu)^{\phi}\!-\!\fM_{g,l;k}^{\star}(X,B;J,\nu)^{\phi}\big)
\le  \dim_{g,l;k}(B)\!-\!2\EE
for every $(J,\nu)\!\in\!\wh\cH_{g,l;k}^{\om,\phi}(X)$.
\end{prp}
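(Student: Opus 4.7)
The plan is to cover the image under $\st\!\times\!\ev$ of
$\ov\fM_{g,l;k}(X,B;J,\nu)^{\phi}\!\setminus\!\fM_{g,l;k}^{\star}(X,B;J,\nu)^{\phi}$
by the images of finitely many maps~\eref{redstev_e} and to bound the dimensions of
these images by $\dim_{g,l;k}(B)\!-\!2$ using the semi-positivity hypothesis.
I would take $\wh\cH_{g,l;k}^{\om,\phi}(X)$ to be the Baire subset supplied by
Proposition~\ref{RTreg_prp}, so that every relevant $\cZ_{\ga',\vp}^*(J,\nu)$ is
a smooth manifold of dimension $\dim_{\ga',\vp}$ given by~\eref{RTprp_e}.

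First, I would apply Lemma~\ref{mapred_lmm} to every
$[\u]\!\in\!\ov\fM_{g,l;k}(X,B;J,\nu)^{\phi}\!\setminus\!\fM_{g,l;k}^{\star}(X,B;J,\nu)^{\phi}$
of combinatorial type $\ga\!\in\!\cA_{g,l;k}^{\phi}(B;J,\nu)$, producing a pair
$(\ga',\vp)\!\in\!\cA(\ga)$ and an element $\u'\!\in\!\cZ_{\ga',\vp}^*(J,\nu)$
with the same image under $\st\!\times\!\ev$.
The set $\cA_{g,l;k}^{\phi}(B;J,\nu)$ is finite by \cite[Proposition~4.1.5]{MS},
and for each such $\ga$ the subset $\cA(\ga)$ is finite as well, since the area bound
$\om(B')\!\le\!\om(B)$ following from~\eref{degdiff_e}
and the genus bound $g'\!\le\!g\!+\!|\ale(\ga)_\bu|$ from~\eref{redgcond_e}
restrict $\ga'$ to finitely many combinatorial possibilities.
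Thus the image in question is covered by finitely many sets of the form
$\{\st_{\ga',\vp}\!\times\!\ev\}(\cZ_{\ga',\vp}^*(J,\nu))$, each of
dimension at most~$\dim_{\ga',\vp}$.

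The main estimate is then
\begin{align*}
\dim_{g,l;k}(B)-\dim_{\ga',\vp} ={}
&\blr{c_1(TX),B\!-\!B'}+(n\!-\!3)(g'\!-\!g)+|\ga'|\\
&{}-n\,\ell(\ga',\ale(\ga',\ov\ga)_0)
+\dim_{\R}\!\R\cM_{\ga';\ale(\ga',\ov\ga)}+\big|\sF_{\ga'}^{\dag}\!(\ale(\ga',\ov\ga)_0)\big|,
\end{align*}
which I would show is at least~$2$ whenever $(\ga',\vp)$ arises from an element
outside of~$\fM_{g,l;k}^{\star}$.
Here $g'\!-\!g\!=\!|\ale(\ga)_\bu|\!-\!|\ale(\ga',\ov\ga)_\bu|\!\ge\!0$
by~\eref{redgcond_e}, and the last three terms can be bounded from below using~\eref{cMgasV_e}
by a quantity involving only $\ell(\ga',\ale(\ga',\ov\ga)_0)$ and the numbers of
connected components and interior edges of the contracted subgraph.
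The remaining contribution decomposes~as
$\lr{c_1(TX),B\!-\!B'}=\sum_{v'}\big(\sum_{v\in\ka^{-1}(v')}\vr(v)-1\big)\lr{c_1(TX),\fd'(v')}$
over $v'\!\in\!\ale(\ga',\ov\ga)_\bu$; each such $\fd'(v')$ is the degree of a
non-constant $J$-holomorphic sphere component and hence is spherical of positive
$\om$-area, so that the relevant clause of Definition~\ref{SSP_dfn} supplies the
required lower bound.

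The main obstacle will be the case analysis needed to guarantee a deficit of
at least~$2$ rather than mere non-negativity.
Elements outside~$\fM_{g,l;k}^{\star}$ either carry at least two nodes or fail to be
simple, so the deficit is forced by $|\ga'|\!\ge\!2$, by a genuine multiple cover
($\vr(v)\!>\!1$ for some $v$, or $\ka$ non-injective), or by a combination thereof;
in each situation the bookkeeping must be carried out separately for the three edge
types $\E_\C(\ga')$, $\E_{\R\C}(\ga')$, $\E_{\R\R}(\ga')$ and for
the $\tau$- versus $\eta$-invariant real bubbles, each matched to the appropriate
clause of Definition~\ref{SSP_dfn}.
This parallels the complex-case reasoning of \cite[Theorem~6.6.1]{MS} and
\cite[Proposition~2.3]{RT2}, with the borderline $n\!=\!2$ case handled by
the sharper middle inequality of Definition~\ref{SSP_dfn} that rules out the
problematic bubble configurations shown in Figure~\ref{c1B_fig}.
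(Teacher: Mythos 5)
Your approach matches the paper's in outline: cover the image by the spaces $\cZ_{\ga',\vp}^*(J,\nu)$ via Lemma~\ref{mapred_lmm}, use the dimension formula~\eref{RTprp_e} together with~\eref{cMgasV_e} to reduce the deficit $\dim_{g,l;k}(B)\!-\!\dim_{\ga',\vp}$ to the quantity
$\lr{c_1(TX),B\!-\!B'}+(n\!-\!3)\big(g'\!-\!g\!-\!\ell(\ga',\ale(\ga',\ov\ga)_0)\big)+\coeff{\ga',\vp}$,
and then argue that this is at least~$2$ via the degree decomposition over $v'\!\in\!\ale(\ga',\ov\ga)_\bu$.
However, there is a genuine gap in the step you summarize as ``each such $\fd'(v')$ is... spherical of positive $\om$-area, so that the relevant clause of Definition~\ref{SSP_dfn} supplies the required lower bound.''
Definition~\ref{SSP_dfn} only asserts $\lr{c_1(TX),B'}\!\ge\!0$ for spherical classes of positive area that \emph{already} satisfy $\lr{c_1(TX),B'}\!\ge\!3\!-\!n$ (and similarly $\ge\!2\!-\!n$ for the two real clauses); it says nothing about classes with smaller $c_1$.
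To conclude that the degrees $\fd'(v')$ actually lie in that range, one needs the genericity-based emptiness statements of Lemma~\ref{g0empty_lmm}: for $J$ in a Baire subset $\wh\cJ_\om^\phi$, there are no simple $J$-holomorphic spheres (resp.\ real spheres, real $\tau$-spheres) of degree $B'$ with $\lr{c_1(TX),B'}\!<\!3\!-\!n$ (resp.\ $<\!2\!-\!n$), because the corresponding moduli spaces would be negative-dimensional.
This is exactly the content of Corollary~\ref{g0empty_crl}, repackaged as Corollary~\ref{g0empty_crl2}, and it is what turns semi-positivity into the needed inequalities~\eref{g0emptyC_e}--\eref{g0emptyRtau_e}.

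A consequence is that your choice of $\wh\cH_{g,l;k}^{\om,\phi}(X)$ is too large: you take only the Baire subset supplied by Proposition~\ref{RTreg_prp}, but the argument also requires $J$ to lie in the Baire subset $\wh\cJ_\om^\phi$ of Lemma~\ref{g0empty_lmm} (equivalently, $(J,\nu)$ in the preimage of that set under the projection $\cH_{g,l;k}^{\om,\phi}(X)\!\lra\!\cJ_\om^\phi$).
The Baire subset in the statement should therefore be the intersection of the two---still Baire of second category---so that \emph{both} the smoothness and dimension of $\cZ_{\ga',\vp}^*(J,\nu)$ from Proposition~\ref{RTreg_prp}\ref{cZstr_it} and the $c_1$ lower bounds from Corollary~\ref{g0empty_crl2} hold simultaneously.
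With that fix, and with the case analysis you indicate (separating $|\ga|\!\ge\!2$ simple strata from non-simple ones, and distinguishing the real/complex/real-$\tau$ contributions), your sketch becomes the paper's argument.
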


\begin{prp}\label{RTdim2_prp}
Let $(X,\om,\phi)$, $g,l,k$, $B$, and
$\cH_{g,l;k}^{\om,\phi}(X)$ be as in Proposition~\ref{RTdim_prp}.
For all elements $(J,\nu)$ and $(J',\nu')$ of $\wh\cH_{g,l;k}^{\om,\phi}(X)$,
there exists a Baire subset of second category as in~\eref{whsPdfn_e}
such~that
$$\dim \big\{\st\!\times\!\ev\big\}
\big(\ov\fM_{g,l;k}(B;\al)^{\phi}\!-\!\fM_{g,l;k}^{\star}(B;\al)^{\phi}\big)
\le \dim_{g,l;k}(B)\!-\!1$$
for every $\al\!\in\!\wh\sP(J,\nu;J',\nu')$.
\end{prp}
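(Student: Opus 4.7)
The plan is to set $\wh\sP(J,\nu;J',\nu')\!\subset\!\sP(J,\nu;J',\nu')$ to be the Baire subset of second category provided by Proposition~\ref{RTreg2_prp} and then to cover the image
$\{\st\!\times\!\ev\}(\ov\fM_{g,l;k}(B;\al)^{\phi}-\fM_{g,l;k}^{\star}(B;\al)^{\phi})$
by finitely many smooth images of dimension at most $\dim_{g,l;k}(B)-1$. The key reduction is Lemma~\ref{mapred_lmm}: for $\al\!\in\!\wh\sP(J,\nu;J',\nu')$ and any $(t,[\u])$ in the complement whose combinatorial type is $\ga\!\in\!\cA_{g,l;k}^{\phi}(B)$, the lemma applied to $\al(t)$ supplies $(\ga',\vp)\!\in\!\cA(\ga)$ and $\u'\!\in\!\cZ_{\ga',\vp}^*(\al(t))$ with the same $\st\!\times\!\ev$-image. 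By~\eref{degdiff_e} and \cite[Proposition~4.1.5]{MS}, only finitely many such pairs $(\ga',\vp)$ occur, all with $\om(B')\!\le\!\om(B)$; by Proposition~\ref{RTreg2_prp}, each $\cZ_{\ga',\vp}^*(\al)$ is a smooth manifold with boundary of dimension $\dim_{\ga',\vp}+1$, and its image under~\eref{alredstev_e} has dimension at most $\dim_{\ga',\vp}+1$.

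The remaining task is to establish the dimension estimate $\dim_{\ga',\vp}\!\le\!\dim_{g,l;k}(B)-2$ for every $(\ga',\vp)$ arising in this manner from a map not in $\fM_{g,l;k}^{\star}(X,B;\al(t))^{\phi}$. This is exactly the dimension count which, when combined with Proposition~\ref{RTreg_prp} in place of Proposition~\ref{RTreg2_prp}, also yields Proposition~\ref{RTdim_prp}. Unpacking~\eref{RTprp_e} and~\eref{RTprp_e1}, the inequality amounts to a lower bound of~$2$ on
$$\blr{c_1(TX),B\!-\!B'}+(n\!-\!3)(g'\!-\!g)+|\ga|+\dim_{\R}\R\cM_{\ga';\ale(\ga',\ov\ga)}+\big|\sF_{\ga'}^{\dag}(\ale(\ga',\ov\ga)_0)\big|-n\,\ell\big(\ga',\ale(\ga',\ov\ga)_0\big).$$
By~\eref{degdiff_e}, $B\!-\!B'$ is a positive integer combination of the classes $\fd'(v')$ on the ghost vertices $v'\!\in\!\ale(\ga',\ov\ga)_{\bu}$, each of which falls into one of three categories (conjugate pair of spheres, $\tau$-real sphere, or $\eta$-real sphere); the three semi-positivity inequalities of Definition~\ref{SSP_dfn} then bound $\lr{c_1(TX),B\!-\!B'}$ below in a way that compensates for any drop in $(n\!-\!3)(g'\!-\!g)$. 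Combining this with the tree-counting inequality~\eref{cMgasV_e} applied to $\sV\!=\!\ale(\ga',\ov\ga)_0$ furnishes the required~$2$, with equality only in the trivial case $(\ga',\vp)\!=\!(\ga,\vp_{\ga})$ with $|\ga|\!\le\!1$ and $\u$ simple, which lies in $\fM_{g,l;k}^{\star}(X,B;\al(t))^{\phi}$ and so is excluded from the complement.

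The main obstacle is this case-by-case dimension bound, which requires separately handling the five types of irreducible components in $\ga'$ classified by real versus conjugate and contracted versus non-contracted (with a further split of the real non-contracted components between $\tau$- and $\eta$-type). The delicate case is $n\!=\!2$: here the strengthened middle bound $\lr{c_1(TX),B}\!\ge\!1$ for $\tau$-classes and $\lr{c_1(TX),B}\!\ge\!1$ for $\R S$-classes in Definition~\ref{SSP_dfn} is precisely what rules out the stable-map configurations of Figure~\ref{c1B_fig}. Once the estimate $\dim_{\ga',\vp}\!\le\!\dim_{g,l;k}(B)-2$ is in place, Proposition~\ref{RTdim2_prp} follows immediately by adding the $+1$ from the parameter $t\!\in\!\bI$ built into Proposition~\ref{RTreg2_prp}\ref{mainstr2_it}.
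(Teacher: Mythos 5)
Your proposal is correct and follows the same route the paper has in mind: the paper states that Proposition~\ref{RTdim2_prp} is ``obtained similarly from Proposition~\ref{RTreg2_prp},'' and your argument is exactly the parametrized version of the proof of Proposition~\ref{RTdim_prp} — cover the bad locus via Lemma~\ref{mapred_lmm}, bound the dimensions of $\cZ_{\ga',\vp}^*(\al)$ using Proposition~\ref{RTreg2_prp}, and derive the estimate $\dim_{\ga,\vp}\le\dim_{g,l;k}(B)\!-\!2$ from the semi-positivity input and the tree-counting inequality. Two small imprecisions worth noting: the Baire set $\wh\sP(J,\nu;J',\nu')$ must be taken as the intersection of the subset supplied by Proposition~\ref{RTreg2_prp} with the one supplied by Corollary~\ref{g0empty_crl2} (the latter is where the semi-positivity inequalities of Definition~\ref{SSP_dfn} actually turn into the combinatorial lower bound you invoke), and the $|\ga|$ in your displayed expression should read $|\ga'|$.
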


\noindent
As with the definitions of pseudocycle in \cite{pseudo,MS},
\eref{RTcrl_e2} means~that 
$$\big\{\st\!\times\!\ev\big\}\big(\ov\fM_{g,l;k}(X,B;J,\nu)^{\phi}\!-\!
\fM_{g,l;k}^{\star}(X,B;J,\nu)\big)^{\phi} 
\subset \R\ov\cM_{g,l;k} \!\times\! \big(X^l\!\times\!(X^{\phi})^k\big)$$
is contained in the image of a smooth map from a manifold of
dimension equal to the right-hand side of this inequality.
Thus, the restriction 
$$\st\!\times\!\ev\!:\fM_{g,l;k}^{\star}(X,B;J,\nu)^{\phi} \lra 
\R\ov\cM_{g,l;k} \times \big(X^l\!\times\!(X^{\phi})^k\big)$$
is a pseudocycle whenever $(J,\nu)\!\in\!\wh\cH_{g,l;k}^{\om,\phi}(X)$ {\it and}
the domain of this map is an oriented manifold.\\

\noindent
Let $B\!\in\!H_2(X;\Z)\!-\!\{0\}$. 
For $J\!\in\!\cJ_{\om}^{\phi}$ and $\si\!=\!\tau,\eta$, we denote~by 
$$\fM_0^*(X,B;J)\subset\ov\fM_0(X,B;J)  \qquad\hbox{and}\qquad
\fM_0^*(X,B;J)^{\si,\phi}\subset\ov\fM_0(X,B;J)^{\si,\phi}$$
the moduli spaces of equivalences classes of simple degree~$B$ $J$-holomorphic maps
from~$\P^1$ to~$X$ and of simple real degree~$B$ $J$-holomorphic maps
from~$(\P^1,\si)$ to~$(X,\phi)$, respectively.
Let
$$\fM_0^*(X,B;J)^{\phi} \equiv
\fM_0^*(X,B;J)^{\phi,\tau}\!\sqcup\!\fM_0^*(X,B;J)^{\phi,\eta}
\subset\ov\fM_0(X,B;J)^{\phi}$$
be the space of all simple real degree~$B$ $J$-holomorphic maps from~$\P^1$ to~$(X,\phi)$.
We note~that 
\begin{gather*}
\fM_0^*(X,B;J)=\eset~~~\hbox{if}~~\om(B)\!\le\!0,\\
\fM_0^*(X,B;J)^{\phi}=\eset~~\hbox{if}~B\!\not\in\!H_2^{\R S}(X;\Z)^{\phi},\qquad
\fM_0^*(X,B;J)^{\phi,\tau}=\eset~~\hbox{if}~B\!\not\in\!H_2^{\tau}(X;\Z)^{\phi}.
\end{gather*}
The natural morphism
$$\ov\fM_0(X,B;J)^{\phi} \lra \ov\fM_0(X,B;J) $$
restricts to an embedding 
$$\fM_0^*(X,B;J)^{\phi} \lra \fM_0^*(X,B;J);$$
we view $\fM_0^*(X,B;J)^{\phi}$ as a subspace of $\fM_0^*(X,B;J)$.\\

\noindent
For $J,J'\!\in\!\cJ_{\om}^{\phi}$, define
$$\sP(J;J') =
\big\{\al\!:[0,1]\!\lra\!\cJ_{\om}^{\phi}\!:\al(0)\!=\!J,~\al(1)\!=\!J'\big\}$$
to be the space of paths from~$J$ and~$J'$. 
For any such path~$\al$ and each \hbox{$B\!\in\!H_2(X;\Z)\!-\!\{0\}$}, let 
\begin{equation*}\begin{split}
\fM_0^*(B;\al)&=\big\{\big(t,[\u]\big)\!:t\!\in\!\bI,\,
[\u]\!\in\!\fM_0^*\big(X,B;\al(t)\big)\big\},\\
\fM_0^*(B;\al)^{\phi}&=\big\{\big(t,[\u]\big)\!:t\!\in\!\bI,\,
[\u]\!\in\!\fM_0^*\big(X,B;\al(t)\big)^{\phi}\big\},\\
\fM_0^*(B;\al)^{\phi,\tau}&=\big\{\big(t,[\u]\big)\!:t\!\in\!\bI,\,
[\u]\!\in\!\fM_0^*\big(X,B;\al(t)\big)^{\phi,\tau}\big\}.
\end{split}\end{equation*}

\begin{lmm}\label{g0empty_lmm}
Let $(X,\om,\phi)$ be a compact real symplectic $2n$-manifold.
For every \hbox{$B\!\in\!H_2(X;\Z)$},
there exists a Baire subset \hbox{$\wh\cJ_{\om}^{\phi}\!\subset\!\cJ_{\om}^{\phi}$}
of second category  with the following property.
If \hbox{$J,J'\!\in\!\wh\cJ_{\om}^{\phi}$}, there exists a Baire subset 
\BE{whsPJdfn_e}\wh\sP(J;J')\!\subset\!\sP(J;J')\EE 
of second category such~that for every $\al\!\in\!\wh\sP(J;J')$
\begin{alignat*}{2}
\fM_0^*(B';\al)&=\fM_0^*(B';\al)^{\phi}  &\qquad
&\hbox{if}~~B'\!\in\!H_2(X;\Z),~0\!<\!\om(B')\!\le\!\om(B),~
\lr{c_1(TX),B'}<3\!-\!n,\\
\fM_0^*(B';\al)^{\phi}&=\eset  &\qquad
&\hbox{if}~~B'\!\in\!H_2(X;\Z),~0\!<\!\om(B')\!\le\!\om(B),~\lr{c_1(TX),B'}<2\!-\!n.
\end{alignat*}
\end{lmm}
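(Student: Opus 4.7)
The plan is to combine standard transversality for simple genus~0 $J$-holomorphic spheres, in both complex and real versions, with a routine dimension count. Since the relevant moduli spaces have $g\!=\!l\!=\!k\!=\!0$ and so do not satisfy the stability condition $2(g\!+\!l)\!+\!k\!\ge\!3$, Proposition~\ref{RTreg_prp} does not apply directly; instead I would invoke the classical genus~0 transversality from \cite[Section~3]{MS} and its real analogue from \cite{Wel4,Sol}, recast in the $\phi$-equivariant category.

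First I would establish that there is a Baire subset $\wh\cJ_\om^\phi\!\subset\!\cJ_\om^\phi$ of second category such that for every $J\!\in\!\wh\cJ_\om^\phi$ and every $B'\!\in\!H_2(X;\Z)$, the moduli space $\fM_0^*(X,B';J)$ is a smooth manifold of real dimension $2\lr{c_1(TX),B'}\!+\!2n\!-\!6$ and $\fM_0^*(X,B';J)^\phi$ is a smooth manifold of dimension $\lr{c_1(TX),B'}\!+\!n\!-\!3$, and analogously that for $J,J'\!\in\!\wh\cJ_\om^\phi$ there is a Baire subset $\wh\sP(J;J')\!\subset\!\sP(J;J')$ of second category for which $\fM_0^*(B';\al)$ and $\fM_0^*(B';\al)^\phi$ are smooth manifolds with boundary of dimensions one greater. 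The proofs are standard universal-moduli-space arguments in the $\phi$-equivariant category; countability of $H_2(X;\Z)$ permits the Baire subsets to be taken independent of the class~$B'$. With these in hand, the lemma follows by dimension counting: if $\lr{c_1(TX),B'}\!<\!3\!-\!n$, integrality of the Chern number gives $\lr{c_1(TX),B'}\!\le\!2\!-\!n$, so $\dim\fM_0^*(B';\al)\!\le\!-1\!<\!0$, forcing $\fM_0^*(B';\al)\!=\!\eset$ and hence trivially $\fM_0^*(B';\al)\!=\!\fM_0^*(B';\al)^\phi$; likewise, if $\lr{c_1(TX),B'}\!<\!2\!-\!n$, then $\dim\fM_0^*(B';\al)^\phi\!\le\!-1\!<\!0$ and $\fM_0^*(B';\al)^\phi\!=\!\eset$.

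The main technical point is the $\phi$-equivariant transversality for \emph{non-real} simple $J$-holomorphic maps, i.e.~verifying that perturbations of $J$ within $\cJ_\om^\phi$ already suffice to surject onto the cokernel of the linearized Cauchy-Riemann operator at such a map $u$. The key observation is that a simple non-real map $u$ admits an injective point $z\!\in\!\P^1$ with $u(z)\!\not\in\!\phi(u(\P^1))$: were this to fail, pointwise matching would produce an antiholomorphic involution $\si$ on $\P^1$ satisfying $u\!\circ\!\si\!=\!\phi\!\circ\!u$, contradicting the assumed non-reality of $u$. A $\phi$-antisymmetric variation of $J$ supported in a small neighborhood of $u(z)$ disjoint from $\phi(u(\P^1))$ then provides exactly the freedom required for the cokernel surjectivity argument of \cite[Proposition~3.2.1]{MS}, with the real case being an analogous standard computation using $\phi$-antisymmetric perturbations concentrated near an injective point on the real locus.
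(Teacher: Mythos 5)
Your strategy matches the paper's: achieve $\phi$-equivariant transversality for simple genus~$0$ maps and then conclude by a dimension count. Your final paragraph also correctly identifies the crucial transversality point at a \emph{non-real} simple map~$u$: there is an injective point~$z$ with $u(z)\!\not\in\!\phi(u(\P^1))$, so a $\phi$-antisymmetric variation of~$J$ supported near~$u(z)$ and away from $\phi(u(\P^1))$ can cover the cokernel.

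The slip is in the second paragraph. The claim that $\fM_0^*(X,B';J)$ (and $\fM_0^*(B';\al)$) is a smooth manifold of the \emph{complex} expected dimension $2\lr{c_1(TX),B'}\!+\!2n\!-\!6$ for generic $J\!\in\!\cJ_\om^\phi$ fails at the real locus. At a real map~$u$, the linearization~$D_u$ commutes with the $\Z_2$-action on $\Ga(u^*TX)$ and $\Ga^{0,1}(u^*TX)$, and for any $A\!\in\!T_J\cJ_\om^\phi$ the resulting term $A\!\circ\!\tnd u\!\circ\!\fJ$ is $\Z_2$-\emph{invariant}; hence perturbations of~$J$ within~$\cJ_\om^\phi$ can only eliminate the invariant part of $\cok D_u$, while the anti-invariant part may survive. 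Consequently your intermediate conclusion $\fM_0^*(B';\al)\!=\!\eset$ when $\lr{c_1(TX),B'}\!<\!3\!-\!n$ is an overclaim: when $\lr{c_1(TX),B'}\!=\!2\!-\!n$ the real locus $\fM_0^*(B';\al)^\phi$ is $0$-dimensional (with the $+1$ from the path parameter) and may well be nonempty, and the lemma asserts only $\fM_0^*(B';\al)\!=\!\fM_0^*(B';\al)^\phi$. The paper's proof avoids this by phrasing the dimension statement for the \emph{non-real} locus $\fM_0^*(B';\al)\!-\!\fM_0^*(B';\al)^\phi$, which is a smooth manifold of dimension $2\big(\lr{c_1(TX),B'}\!+\!n\!-\!3\big)\!+\!1$ by exactly the argument of your third paragraph, and deducing that this complement is empty. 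Restating your dimension formula and conclusion for the non-real locus, rather than for all of $\fM_0^*(B';\al)$, closes the gap.
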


\begin{proof}
By Section~\ref{trans_sec}, there exists a Baire subset 
\hbox{$\wh\cJ_{\om}^{\phi}\!\subset\!\cJ_{\om}^{\phi}$} 
of second category  with the following property.
If $J,J'\!\in\!\wh\cJ_{\om}^{\phi}$, there exists a Baire subset of second category
as in~\eref{whsPJdfn_e} such~that for all 
$$\al\in\wh\sP(J;J') \qquad\hbox{and}\qquad
B'\!\in\!H_2(X;\Z) ~~\hbox{with}~~0\!<\!\om(B')\!\le\!\om(B)$$
the moduli spaces $\fM_0^*(B';\al)\!-\!\fM_0^*(B';\al)^{\phi}$
and $\fM_0^*(B';\al)^{\phi}$ are smooth manifolds of dimensions
\begin{equation*}\begin{split}
\dim_{\R}\!\big(\fM_0^*(B';\al)\!-\!\fM_0^*(B';\al)^{\phi}\big)
&=2\big(\lr{c_1(TX),B'}\!+\!n\!-\!3\big)+1, \\
\dim_{\R}\fM_0^*(B';\al)^{\phi}&=\lr{c_1(TX),B'}\!+\!n\!-\!3+1.
\end{split}\end{equation*}
This implies the claim.
\end{proof}

\begin{crl}\label{g0empty_crl}
Let $(X,\om,\phi)$ be a compact semi-positive real symplectic $2n$-manifold.
For every \hbox{$B\!\in\!H_2(X;\Z)$},
there exists a Baire subset \hbox{$\wh\cJ_{\om}^{\phi}\!\subset\!\cJ_{\om}^{\phi}$}
of second category  with the following property.
If \hbox{$J,J'\!\in\!\wh\cJ_{\om}^{\phi}$}, there exists a Baire subset of second category 
as in~\eref{whsPJdfn_e} such~that for every $\al\!\in\!\wh\sP(J;J')$
\begin{alignat}{2}
\label{g0emptyC_e}
\lr{c_1(TX),B'}&\ge0,3\!-\!n &\quad 
&\hbox{if}~~B'\!\in\!H_2(X;\Z),~0\!<\!\om(B')\!\le\!\om(B),~
\fM_0^*(B';\al)\!\neq\!\eset,\\
\label{g0emptyR_e}
\lr{c_1(TX),B'}&\ge\de_{n2} &\quad 
&\hbox{if}~~B'\!\in\!H_2(X;\Z),~0\!<\!\om(B')\!\le\!\om(B),~
\fM_0^*(B';\al)^{\phi}\!\neq\!\eset,\\
\label{g0emptyRtau_e}
\lr{c_1(TX),B'}&\ge1 &\quad 
&\hbox{if}~~B'\!\in\!H_2(X;\Z),~0\!<\!\om(B')\!\le\!\om(B),~
\fM_0^*(B';\al)^{\phi,\tau}\!\neq\!\eset.
\end{alignat}
\end{crl}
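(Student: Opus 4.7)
The plan is to take $\wh\cJ_{\om}^{\phi}\!\subset\!\cJ_{\om}^{\phi}$ and
$\wh\sP(J;J')\!\subset\!\sP(J;J')$ to be exactly the Baire subsets produced
by Lemma~\ref{g0empty_lmm} and then deduce each of the bounds~\eref{g0emptyC_e},
\eref{g0emptyR_e}, and~\eref{g0emptyRtau_e} by feeding the dimensional content
of Lemma~\ref{g0empty_lmm} into the clauses of Definition~\ref{SSP_dfn}.
Throughout I would fix $\al\!\in\!\wh\sP(J;J')$ and $B'\!\in\!H_2(X;\Z)$ with
$0\!<\!\om(B')\!\le\!\om(B)$.

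I would dispose of \eref{g0emptyR_e} and \eref{g0emptyRtau_e} first.
Nonemptiness of $\fM_0^*(B';\al)^{\phi}$ (resp.~$\fM_0^*(B';\al)^{\phi,\tau}$)
places $B'$ in $H_2^{\R S}(X;\Z)^{\phi}$ (resp.~$H_2^{\tau}(X;\Z)^{\phi}$)
by definition of these subsets, and the second assertion of
Lemma~\ref{g0empty_lmm} forces $\lr{c_1(TX),B'}\!\ge\!2\!-\!n$.
The real (resp.~$\tau$) semi-positivity clause of Definition~\ref{SSP_dfn}
then upgrades this to $\lr{c_1(TX),B'}\!\ge\!\de_{n2}$ (resp.~$\ge\!1$),
which is the claimed bound.

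The main step is~\eref{g0emptyC_e}, which I would prove by a short dichotomy.
Suppose $\fM_0^*(B';\al)\!\neq\!\eset$; since each element of this moduli space is
represented by a $J$-holomorphic map from $\P^1$ to $X$, necessarily
$B'\!\in\!H_2^S(X;\Z)$.
If $\fM_0^*(B';\al)$ strictly contains $\fM_0^*(B';\al)^{\phi}$, then
the contrapositive of the first assertion of Lemma~\ref{g0empty_lmm} gives
$\lr{c_1(TX),B'}\!\ge\!3\!-\!n$, and the (complex) semi-positivity of $(X,\om)$
applied to the spherical class $B'$ then yields $\lr{c_1(TX),B'}\!\ge\!0$.
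Otherwise $\fM_0^*(B';\al)\!=\!\fM_0^*(B';\al)^{\phi}\!\neq\!\eset$ places
$B'$ in $H_2^{\R S}(X;\Z)^{\phi}$, so the case just handled yields
$\lr{c_1(TX),B'}\!\ge\!\de_{n2}\!\ge\!0$; the inequality $\de_{n2}\!\ge\!3\!-\!n$
is automatic for $n\!\ge\!2$, while in the degenerate $n\!=\!1$ case any non-zero
spherical class on a compact connected symplectic surface forces $X\!\approx\!S^2$
and hence $\lr{c_1(TX),B'}$ to be a positive even integer, which is $\ge\!2\!=\!3\!-\!n$.

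I do not foresee a substantive obstacle: the argument is a mechanical
translation of the dimension-theoretic content of Lemma~\ref{g0empty_lmm}
into the clauses of Definition~\ref{SSP_dfn}. The only mildly delicate point
is matching the $n\!=\!1$ boundary of~\eref{g0emptyC_e} with the weaker output
of real semi-positivity in that range, which is why the spherical/surface
parity observation is needed at the end of the dichotomy.
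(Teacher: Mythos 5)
Your proof is correct and follows essentially the same route as the paper's: both deduce the bounds from Lemma~\ref{g0empty_lmm} and Definition~\ref{SSP_dfn}, both use the already-established~\eref{g0emptyR_e} to bootstrap the $3\!-\!n$ bound in~\eref{g0emptyC_e}, and both dispose of the residual $n\!=\!1$ case by the even-parity of $c_1$ on $\P^1$. The only difference is organizational — you phrase the final step as a dichotomy on whether $\fM_0^*(B';\al)$ strictly contains $\fM_0^*(B';\al)^{\phi}$, while the paper phrases it as a proof by contradiction assuming $\lr{c_1(TX),B'}\!<\!3\!-\!n$; the two are contrapositives of one another.
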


\begin{proof}
The first inequality in~\eref{g0emptyC_e}, \eref{g0emptyR_e}, and \eref{g0emptyRtau_e} 
follow immediately from Lemma~\ref{g0empty_lmm} and Definition~\ref{SSP_dfn}.
If 
$$\fM_0^*(B';\al)\neq\eset  \qquad\hbox{and}\qquad  \lr{c_1(TX),B'}<3\!-\!n,$$
then the first statement of Lemma~\ref{g0empty_lmm} gives
$$\fM_0^*(B';\al)^{\phi}= \fM_0^*(B';\al)\neq\eset\,.$$
By the second statements of Lemma~\ref{g0empty_lmm} and of the present corollary, 
this implies that 
$$2\!-\!n=\lr{c_1(TX),B'}\ge\de_{n2}.$$
Thus, $n\!=\!1$ and $X\!=\!\P^1$ ($X$ can be assumed to be connected). 
However, $\lr{c_1(TX),B'}$ is even for every homology class~$B'$ on $X\!=\!\P^1$.
\end{proof}

\noindent 
Suppose \hbox{$g,l,k\!\in\!\Z^{\ge0}$} with \hbox{$2(g\!+\!l)\!+\!k\!\ge\!3$}, 
$B\!\in\!H_2(X;\Z)$, and $\ga\!\in\!\cA_{g,l;k}^{\phi}(B)$.
For $(\ga',\vp)\!\in\!\cA'(\ga)$, let
$$\coeff{\ga',\vp}=|\ga'|\!-\!\big|\sE_{\ga'}\big(\ale(\ga',\ov\ga)_0\big)\big|
\!-\!\big|\pi_0\big(\ga',\ale(\ga',\ov\ga)_0\big)\big|\,.$$
Since $\ga'$ is connected,
\BE{g0empty_e23}\coeff{\ga',\vp} \ge \big|\ale(\ga',\ov\ga)_{\bu}\big|\,.\EE
By the second condition in~\eref{redgcond_e},
\BE{g0empty_e19} \coeff{\ga',\vp}+\ell\big(\ga',\ale(\ga',\ov\ga)_0\big)
=g'\!+\!\big|\ale(\ga',\ov\ga)_{\bu}\big|+\big|\ov\Ver\big|\!-\!1
\ge \big|\ale(\ga)_{\bu}\big|.\EE
We denote $\cA'(\ga)\!\subset\!\cA(\ga)$ the subset of pairs $(\ga',\vp)$
such~that 
\BE{RTcrl_e9}|\vr|_{v'}\equiv\sum_{v\in\ka^{-1}(v')}\!\!\!\!\!\!\!\vr(v)-1~\ge1\EE
for some $v'\!\in\!\ale(\ga',\ov\ga)_{\bu}$,
where $\ka$ and $\vr$ are the maps~\eref{redmapcond_e} corresponding to~$(\ga',\vp)$.

\begin{crl}\label{g0empty_crl2}
Let $(X,\om,\phi)$ be a compact semi-positive real symplectic $2n$-manifold.
For all \hbox{$g,l,k\!\in\!\Z^{\ge0}$} with \hbox{$2(g\!+\!l)\!+\!k\!\ge\!3$} 
and $B\!\in\!H_2(X;\Z)$, there exists a Baire subset of second category as in~\eref{whcHdfn_e}
with the following property.
For all elements $(J,\nu)$ and $(J',\nu')$ of this subset,
there exists a Baire subset of second category as in~\eref{whsPdfn_e}
such~that 
$$\blr{c_1(TX),B\!-\!B'}+(n\!-\!3)\big(g'\!-\!g\!-\!\ell\big(\ga',\ale(\ga',\ov\ga)_0\big)\big)\\
+\coeff{\ga',\vp}\ge2$$
for every $\al\!\in\!\wh\sP(J,\nu;J',\nu')$,
$\ga\!\in\!\cA_{g,l;k}^{\phi}(B)$, and 
$(\ga',\vp)\!\in\!\cA'(\ga)$ with $\ga'\!\in\!\cA_{g',l;k}^{\phi}(B')$
and  \hbox{$\cZ_{\ga';\vp}^*(\al)\!\neq\!\eset$}.
\end{crl}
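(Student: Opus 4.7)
The plan is to set $\wh\cH_{g,l;k}^{\om,\phi}(X)$ and $\wh\sP(J,\nu;J',\nu')$ as the intersections of the Baire subsets provided by Propositions~\ref{RTreg_prp}--\ref{RTreg2_prp} with the preimages under the forgetful map $(J,\nu)\!\mapsto\!J$ of the Baire subsets provided by Corollary~\ref{g0empty_crl} (applied over every class $B''$ with $0<\om(B'')\le\om(B)$). Given $\al$ in this set and $(\ga',\vp)\in\cA'(\ga)$ with $\cZ_{\ga',\vp}^*(\al)\neq\eset$, each $v'\in\ale(\ga',\ov\ga)_\bu$ contributes a simple $\al(t)$-holomorphic sphere in class $\fd'(v')\in H_2^S(X;\Z)$ with $0<\om(\fd'(v'))\le\om(B)$, so Corollary~\ref{g0empty_crl} bounds $\lr{c_1(TX),\fd'(v')}$ below by $\max(0,3\!-\!n)$ in general, by~$1$ if $v'$ is $\tau$-real, and by~$\de_{n2}$ if $v'$ is $\si'$-real. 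Combining these with $B-B'=\sum_{v'}|\vr|_{v'}\fd'(v')$ from~\eref{degdiff_e} and the identity $\lr{c_1,\fd'(\si'(v'))}=\lr{c_1,\fd'(v')}$ (a consequence of $\phi^*c_1(TX)=-c_1(TX)$), I would pin down a lower bound on $\lr{c_1(TX),B-B'}$.

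Next I would establish the combinatorial identity
$$\coeff{\ga',\vp}=m+p+|\ov\ga|, \qquad m:=|\ale(\ga',\ov\ga)_\bu|\ge1,\quad p:=g'-g-\ell(\ga',\ale(\ga',\ov\ga)_0)\ge0,$$
by expanding $|\ga'|$ through~\eref{stgacnd_e} using $\fg'|_{\ov\Ver}=\fg|_{\ov\Ver}$ and $\fg'\equiv 0$ on $\ale(\ga',\ov\ga)$, applying the definition of $\ell$, and invoking $\sum_{v\in\ov\Ver}\fg(v)=g-(|\ov\ga|-|\ov\Ver|+1)$ obtained from~\eref{stgacnd_e} applied to the connected graph~$\ov\ga$. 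Substitution then rewrites the left-hand side of the desired inequality as $\lr{c_1(TX),B-B'}+(n-2)p+m+|\ov\ga|$.

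The conclusion then follows by case analysis on~$n$. For $n=2$ every bound yields $\lr{c_1,\fd'(v')}\ge1$, so $\lr{c_1,B-B'}\ge1$ and $m\ge1$ already suffice. For $n\ge3$, the term $(n-2)p$ is non-negative and the inequality is immediate whenever some $v'$ with $|\vr|_{v'}\ge1$ is $\tau$-real (giving $\lr{c_1,B-B'}\ge1$) or whenever $m\ge2$. The delicate subcase is $n\ge3$, $m=1$, with the unique $v'\in\ale(\ga',\ov\ga)_\bu$ being $\eta$-real: the definition of $\Inv(\ga)$ forces $|\ve'^{-1}(v')|=2$ with both flags forming a $\si$-conjugate pair, so the contraction of the corresponding bubble of $\ga$ in the stabilization $\ga\to\ov\ga$ produces an edge of $\ov\ga$ (a self-loop when the two partners coincide on a real vertex of $\ov\Ver$, a simple edge joining a conjugate pair otherwise), yielding $|\ov\ga|\ge1$ and hence $\mathrm{LHS}\ge2$. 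The case $n=1$ reduces to $X=\P^1$, where $\lr{c_1,\fd'(v')}\in2\Z^+$ for every~$v'$, and the same accounting closes the inequality. The hard part will be the combinatorial verification of the $n\ge3$, $m=1$, $\eta$-real subcase, where the three cheap contributions $\lr{c_1,B-B'}$, $(n-2)p$, and $m$ all saturate at $0,0,1$, so the surplus of~$1$ must come entirely from the rigidity of~$\ov\ga$ imposed by the $\si$-conjugate pair of flags at~$v'$, which requires tracking whether the partners of these flags lie in $\ov\Ver$ or in $\ale(\ga',\ov\ga)_0$.
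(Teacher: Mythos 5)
Your overall strategy matches the paper's: the Baire subsets are pulled back from Corollary~\ref{g0empty_crl}, the $c_1$-bounds come from \eref{g0emptyC_e}--\eref{g0emptyRtau_e} applied to the simple spheres $u_{v'}$, and the combinatorics is packaged through $\coeff{\ga',\vp}$. Your identity $\coeff{\ga',\vp}=m+p+|\E(\ov\ga)|$ with $p=g'-g-\ell(\ga',\ale(\ga',\ov\ga)_0)\ge0$ is correct and is a clean repackaging of \eref{g0empty_e19} and \eref{degdiff_e}, and your easy cases ($n=2$; $n\ge3$ with $m\ge2$, or $p\ge1$, or $|\E(\ov\ga)|\ge1$, or a $\tau$-real $v'$) all close as you say.

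The gap is in the critical subcase $n\ge3$, $m=1$, $p=0$, $|\E(\ov\ga)|=0$, and your proposed mechanism for it is wrong on both counts. First, $\eta$-reality of the unique $v'$ does not force $|\ve'^{-1}(v')|=2$: the assignment $\rho\in\Inv(\ga')$ only constrains vertices with at most two special points, and an $\eta$-real $v'$ can carry $4,6,\dots$ special points in conjugate pairs. Second, even when $v'$ carries exactly one conjugate pair of flags, their $\vt'$-partners may lie on ghost vertices of $\ale(\ga',\ov\ga)_0$ rather than on $\ov\Ver$, so they produce no edge of $\ov\ga$; in those configurations the surplus shows up as $p\ge1$ (extra loops through $v'$ relative to $\ov\ga$), not as $|\E(\ov\ga)|\ge1$ — e.g.\ $v'$ joined to the single vertex of $\ov\ga$ by a conjugate pair of edges has $|\E(\ov\ga)|=0$ but $p=1$. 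So the dichotomy you need is not ``$\eta$-real $\Rightarrow|\E(\ov\ga)|\ge1$'' but rather ``$p=|\E(\ov\ga)|=0$ (i.e.\ $\coeff{\ga',\vp}=1$) $\Rightarrow v'$ is $\tau$-real.'' This is what the paper proves: when $\coeff{\ga',\vp}=1$, a count of connecting edges shows there is a \emph{unique} edge whose removal separates $v'$ from $\ov\Ver$; its flag at $v'$ is therefore $\si'$-invariant, so $\Si_{v'}$ carries a real special point, the involution on $\Si_{v'}$ is of type $\tau$, and \eref{g0emptyRtau_e} gives $\lr{c_1(TX),\fd'(v')}\ge1$, whence $\lr{c_1(TX),B-B'}\ge|\vr|_{v'}\ge1$. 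In other words, your ``delicate subcase'' is vacuous, but showing it is vacuous requires this separating-flag argument, not the bubble-contraction one. Separately, your $n=1$ case is not ``the same accounting'': there $(n-2)p=-p$ is negative and must be absorbed, e.g.\ via $\lr{c_1(TX),B-B'}\ge2\sum_{v'}|\vr|_{v'}\ge2\big(|\ale(\ga)_\bu|-m\big)\ge2p$ (the paper instead reorganizes the sum via the second equality in \eref{g0empty_e27} and \eref{g0empty_e21b}); this can be patched but is not automatic.
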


\begin{proof}
Take the Baire subsets to be the preimages of the subsets 
$$\wh\cJ_{\om}^{\phi}\subset\cJ_{\om}^{\phi}
\qquad\hbox{and}\qquad \wh\sP(J;J')\!\subset\!\sP(J;J')$$ 
of Corollary~\ref{g0empty_crl} under the natural projections
\BE{g0empty_e17}\cH_{g,l;k}^{\om,\phi}(X)\lra\cJ_{\om}^{\phi} \qquad\hbox{and}\qquad
\sP(J,\nu;J',\nu')\lra \sP(J;J')\,.\EE
Suppose  $\al\!\in\!\wh\sP(J,\nu;J',\nu')$,
$\ga\!\in\!\cA_{g,l;k}^{\phi}(B)$, and
$(\ga',\vp)\!\in\!\cA'(\ga)$ with~$\ga'$ as in~\eref{gaprdfn_e}
and  \hbox{$\fM_{\ga';\vp}^*(\al)\!\neq\!\eset$}.
Let $\al_{\cJ}$ be the image of~$\al$ under the second projection in~\eref{g0empty_e17} and
$\ka$ and $\vr$ be the maps~\eref{redmapcond_e} corresponding to~$(\ga',\vp)$.\\

\noindent
We first note that
\begin{gather}\label{g0empty_e21a}
\sum_{v'\in\ale(\ga',\ov\ga)_{\bu}}\!\!\!|\vr|_{v'}
\blr{c_1(TX),\fd'(v')}+\coeff{\ga',\vp}\ge2,\\
\label{g0empty_e21b}
\sum_{v'\in\ale(\ga',\ov\ga)_{\bu}}\!
\sum_{v\in\ka^{-1}(v')}\!\!\!\!\!\!\!\big(\vr(v)\!-\!1\big)\blr{c_1(TX),\fd'(v')}
+\coeff{\ga',\vp}\!+\!\ell\big(\ga',\ale(\ga',\ov\ga)_0\big)\ge2.
\end{gather}
By the first inequality in~\eref{g0emptyC_e},
\BE{g0empty_e22}\blr{c_1(TX),\fd'(v')}\ge0 \qquad\forall~v'\!\in\!\ale(\ga',\ov\ga)_{\bu}\,.\EE
If the inequality in~\eref{g0empty_e23} is an equality and 
$|\ale(\ga',\ov\ga)_{\bu}|\!=\!1$, then 
there is a unique flag $f\!\in\!\Fl'$ such~that 
\begin{enumerate}[label=$\bu$,leftmargin=*]

\item $v'\!=\!\ve'(f)$ is the unique element of $\ale(\ga',\ov\ga)_{\bu}$ and

\item the removal of the edge $e\!=\!\{f,\vt'(f)\}$ separates~$v'$
from all vertices \hbox{$\ov\Ver\!\subset\!\Ver'$}.

\end{enumerate}
The unique flag~$f$ is then preserved by the involution~$\si'$ on~$\ga'$.
It thus corresponds to a real point of the irreducible component~$\Si_{v'}$
of the domain of any element of $\fM_{\ga';\vp}^*(\al)$.
It follows that the moduli space $\fM_0^*(\fd'(v');\al_{\cJ})^{\phi,\tau}$ is not empty.
The inequality~\eref{g0empty_e21a} thus follows from~\eref{g0emptyRtau_e},
\eref{g0empty_e23}, and \eref{g0empty_e22} in all cases.  
If $\vr(v)\!\ge\!2$ for some $v\!\in\!\ale(\ga)_{\bu}$,  
\eref{g0empty_e21b} also follows from~\eref{g0emptyRtau_e},
\eref{g0empty_e23}, and \eref{g0empty_e22}.
Otherwise, \hbox{$|\ale(\ga)_{\bu}|\!\ge\!2$} and \eref{g0empty_e21b} follows 
from~\eref{g0empty_e19} and~\eref{g0empty_e22}.\\

\noindent
By the last statement in~\eref{degdiff_e},
\BE{g0empty_e27}\begin{split}
&\blr{c_1(TX),B\!-\!B'}=\sum_{v'\in\ale(\ga',\ov\ga)_{\bu}}\!\!\!\!\!\!\!\!
|\vr|_{v'}\blr{c_1(TX),\fd'(v')}\\
&\quad=\sum_{v'\in\ale(\ga',\ov\ga)_{\bu}}\!
\sum_{v\in\ka^{-1}(v')}\!\!\!\!\!\!\!\big(\vr(v)\!-\!1\big)\blr{c_1(TX),\fd'(v')}
-(n\!-\!3)\big(|\ale(\ga)_{\bu}|\!-\!|\ale(\ga',\ov\ga)_{\bu}|\big)\\
&\qquad
+\sum_{v'\in\ale(\ga',\ov\ga)_{\bu}}\!\!\!\!\!\!\!\!\big(|\ka^{-1}(v')|\!-\!1\big)
\big(\blr{c_1(TX),\fd'(v')}\!+\!n\!-\!3\big).
\end{split}\EE
For $n\!\ge\!3$, the claim follows from the first equality above, 
the first statement in~\eref{degdiff_e}, and~\eref{g0empty_e21a}.
For $n\!<\!3$, the second equality in~\eref{g0empty_e27}, the second statement
in~\eref{redgcond_e}, and the second inequality in~\eref{g0emptyC_e} give 
\begin{equation*}\begin{split}
&\blr{c_1(TX),B\!-\!B'}+(n\!-\!3)\big(g'\!-\!g\!-\!\ell\big(\ga',\ale(\ga',\ov\ga)_0\big)\big)\\
&\hspace{.5in}\ge \sum_{v'\in\ale(\ga',\ov\ga)_{\bu}}\!
\sum_{v\in\ka^{-1}(v')}\!\!\!\!\!\!\!\big(\vr(v)\!-\!1\big)\blr{c_1(TX),\fd'(v')}
+\ell\big(\ga',\ale(\ga',\ov\ga)_0\big).
\end{split}\end{equation*}
The claim now follows from~\eref{g0empty_e21b}.
\end{proof}

\begin{proof}[{\bf{\emph{Proof of Proposition~\ref{RTdim_prp}}}}]
For each $\ga\!\in\!\cA_{g,l;k}^{\phi}(B)$, let
$$\fM_{\ga}^*(J,\nu)\subset\fM_{\ga}(J,\nu)
\quad\hbox{and}\quad
\fM_{\ga}^{mc}(J,\nu)\equiv \fM_{\ga}(J,\nu)\!-\!\fM_{\ga}^*(J,\nu)
\subset \fM_{\ga}(J,\nu)$$
denote the subspace of simple maps in the sense of Definition~\ref{Jnumap_dfn2}
and the subspace of \sf{multiply covered maps}, respectively.
In particular,
\begin{equation*}\begin{split}
\ov\fM_{g,l;k}(X,B;J,\nu)^{\phi}\!-\!\fM_{g,l;k}^{\star}(X,B;J,\nu)^{\phi}
=\bigsqcup_{\begin{subarray}{c}\ga\in\cA_{g,l;k}^{\phi}(B)\\ |\ga|\ge2\end{subarray}}
\hspace{-.22in}\fM_{\ga}^*(J,\nu)~~\sqcup\!\!\!
\bigsqcup_{\begin{subarray}{c}\ga\in\cA_{g,l;k}^{\phi}(B)\\ \ale(\ga)\neq\eset\end{subarray}}
\!\!\!\!\!\!\!\!\!\fM_{\ga}^{mc}(J,\nu)\,.
\end{split}\end{equation*}
Since the map~\eref{evst_e} is continuous and 
$$\big|\big\{\ga\!\in\!\cA_{g,l;k}^{\phi}(B)\!:
\fM_{\ga}(J,\nu)\!\neq\!\eset\big\}\big|<\i\,,$$
it is thus sufficient to show~that 
\begin{alignat}{2}
\label{RTcrl_e5a}
\dim_{\R}\fM_{\ga}^*(J,\nu)&\le\dim_{g,l;k}(B)\!-\!2
&\qquad&\forall~\ga\!\in\!\cA_{g,l;k}^{\phi}(B),~|\ga|\!\ge\!2,\\
\label{RTcrl_e5b}
\dim \big\{\st\!\times\!\ev\big\}
\big(\fM_{\ga}^{mc}(J,\nu)\big)&\le \dim_{g,l;k}(B)\!-\!2
&\qquad&\forall~\ga\!\in\!\cA_{g,l;k}^{\phi}(B),~\ale(\ga)\!\neq\!\eset.
\end{alignat}

\vspace{.2in}

\noindent
Let $\ga\!\in\!\cA_{g,l;k}^{\phi}(B)$, $\ov\ga\!\in\!\cA_{g,l;k}$ be the contraction
of~$\ga$, and $\vp_{\ga}\!\equiv\!\vp$ be as in~\eref{gavpovga_e}.
Then,
$$\fM_{\ga}^*(J,\nu)=\fM_{\ga,\vp_{\ga}}^*(J,\nu)\big/\Aut(\ga)
=\big(\R\cM_{\ga;\ale(\ga,\ov\ga)}\!\times\!\cZ_{\ga,\vp_{\ga}}'^*(J,\nu)\big)\big/\Aut(\ga)\,.$$
For a generic choice of $(J,\nu)$, the images of the irreducible components~$\Si_v$ 
of the domain~$\Si$ of any element~$\u$ of $\cZ_{\ga,\vp_{\ga}}'^*(J,\nu)$ are distinct.
Since $\ale(\ga,\ov\ga)\!=\!\ale(\ga)$ contains no loops, 
the $\Aut(\ga)$-action on $\fM_{\ga}^*(J,\nu)$ is thus free and 
\hbox{$\ell(\ga,\ale(\ga,\ov\ga)_0)\!=\!0$}.
Thus, \eref{RTcrl_e5a} follows from Proposition~\ref{RTreg_prp}\ref{cZstr_it}.\\

\noindent
Suppose $\ga\!\in\!\cA_{g,l;k}^{\phi}(B)$ and $\ale(\ga)\!\neq\!\eset$.
By Lemma~\ref{mapred_lmm},
$$\big\{\st\!\times\!\ev\big\}\big(\fM_{\ga}^{mc}(J,\nu)\big)
\subset \bigsqcup_{(\ga',\vp)\in\cA'(\ga)} \hspace{-.25in}
\big\{\st_{\ga';\vp}\!\times\!\ev\big\}\big(\cZ_{\ga';\vp}^*(J,\nu)\big)\,.$$
For the purposes of establishing~\eref{RTcrl_e5b}, it thus suffices to show~that
\BE{RTcrl_e17}
\dim_{\R}\cZ_{\ga';\vp}^*(J,\nu)\le \dim_{g,l;k}(B)\!-\!2
\qquad\forall\,(\ga',\vp)\!\in\!\cA'(\ga)\,.\EE

\vspace{.2in}

\noindent
Let $(\ga',\vp)\!\in\!\cA'(\ga)$ with $\ga'\!\in\!\cA_{g',l;k}^{\phi}(B')$
and $|\vr|_{v'}$ for each $v'\!\in\!\ale(\ga',\ov\ga)_{\bu}$
be as in~\eref{RTcrl_e9}.
By Proposition~\ref{RTreg_prp}\ref{cZstr_it} and~\eref{RTprp_e}, 
\begin{equation*}\begin{split}
\dim_{\R}\cZ_{\ga';\vp}^*(J,\nu)=
& \dim_{g,l;k}(B) -
\big(\blr{c_1(TX),B\!-\!B'}+(n\!-\!3)(g'\!-\!g)\big)\\
&-|\ga'|+n\,\ell\big(\ga',\ale(\ga',\ov\ga)_0\big)-
\big(\!\dim\R\cM_{\ga';\ale(\ga',\ov\ga)}\!+\!
\big|\sF_{\ga'}^{\dag}\!(\ale(\ga',\ov\ga)_0)\big|\big)\,.
\end{split}\end{equation*}
Along with~\eref{cMgasV_e} with $\sV\!=\!\ale(\ga',\ov\ga)$, this implies that 
\begin{equation*}\begin{split}
\dim_{\R}\cZ_{\ga';\vp}^*(J,\nu)\le
 \dim_{g,l;k}(B) -
\big(\blr{c_1(TX),B\!-\!B'}&+(n\!-\!3)(g'\!-\!g)\big)\\
&+(n\!-\!3)\ell\big(\ga',\ale(\ga',\ov\ga)_0\big)
-\coeff{\ga',\vp} \,.
\end{split}\end{equation*}
The inequality~\eref{RTcrl_e17} now follows from Corollary~\ref{g0empty_crl2}.
\end{proof}

\noindent
Corollary~\ref{RTorient_crl} and Proposition~\ref{RTdim_prp} establish 
Theorem~\ref{RTreal_thm}\ref{RTexist_it}.
Corollary~\ref{RTorient2_crl} and Proposition~\ref{RTdim2_prp} establish 
the first claim of Theorem~\ref{RTreal_thm}\ref{RTindep_it}.\\

\noindent
It remains to establish the second claim of Theorem~\ref{RTreal_thm}\ref{RTindep_it}.
Let 
$$p\!:\wt\cM_{g,l}\lra\ov\cM_{g,l} \qquad\hbox{and}\qquad
p'\!:\wt\cM_{g,l}'\lra\ov\cM_{g,l}$$
be regular covers. Then the cover
$$\wh{p}\!:\wh\cM_{g,l}\equiv\wt\cM_{g,l}\times_{\ov\cM_{g,l}}\!\!\wt\cM_{g,l}'
\lra\ov\cM_{g,l}$$
is also regular and is the composition of $p$ and~$p'$ with covers
$$q\!:\wh\cM_{g,l}\lra\wt\cM_{g,l} \qquad\hbox{and}\qquad 
q'\!:\wh\cM_{g,l}\lra\wt\cM_{g,l}'\,,$$
respectively.
It is thus sufficient to compare the class~\eref{RTclass_e} with its $\wh{p}$-analogue.\\

\noindent
We denote by $\wh\Ga^{0,1}_{g,l;k}(X;J)^{\phi}$ the analogue of the space~\eref{Ga01dfn_e}
for~$\wh{p}$.
The projection $q$ lifts to a cover
$$\wt{q}\!:\R\wh\cU_{g,l;k} \lra \R\wt\cU_{g,l;k}$$
between the real universal curves~\eref{RwtcU_e} determined by~$\wh{p}$ and~$p$.
This lift commutes with the involutions and is biholomorphic on each fiber
of~\eref{RwtcU_e}.
Thus,
$$\wh\nu\equiv \big\{\wt{q}\!\times\!\id_X\big\}^*\nu\in \wh\Ga^{0,1}_{g,l;k}(X;J)^{\phi}
\qquad\forall\,\nu\!\in\!\Ga^{0,1}_{g,l;k}(X;J)^{\phi}\,.$$
The composition of $u_{\cM}$ in~\eref{Jnumap_e} with $\wt{q}$ determines a projection
\BE{wtqfM_e}\wt{q}\!:\fM_{g,l;0}^{\star}(X,B;J,\wh\nu)^{\phi}\lra
\fM_{g,l;0}^{\star}(X,B;J,\nu)^{\phi}\EE
of degree equal to the degree of~$q$.\\

\noindent
If $(J,\nu)\!\in\!\wh\cH_{g,l}^{\om,\phi}(X)$, then 
$$\st\!\times\!\ev\!:\fM_{g,l;0}^{\star}(X,B;J,\wh\nu)^{\phi} \lra 
\R\ov\cM_{g,l}\!\times\!X^l$$
is a pseudocycle representing the class~\eref{RTclass_e} determined by~$\wh{p}$.
It equals to the composition of the pseudocycle 
$$\st\!\times\!\ev\!:\fM_{g,l;0}^{\star}(X,B;J,\nu)^{\phi} \lra 
\R\ov\cM_{g,l}\!\times\!X^l$$
with~\eref{wtqfM_e}.
Thus, 
\begin{equation*}\begin{split}
&\Big[\st\!\times\!\ev\!:\fM_{g,l;0}^{\star}(X,B;J,\wh\nu)^{\phi} \lra 
\R\ov\cM_{g,l}\!\times\!X^l\Big]\\
&\hspace{1in}
=\big(\deg q\big)
\Big[\st\!\times\!\ev\!:\fM_{g,l;0}^{\star}(X,B;J,\nu)^{\phi} \lra 
\R\ov\cM_{g,l}\!\times\!X^l\Big].
\end{split}\end{equation*}
Since the degree of $\wh{p}$ is the product of the degrees of $p$ and~$q$,
this establishes the second claim of Theorem~\ref{RTreal_thm}\ref{RTindep_it}.

\section{Transversality}
\label{trans_sec}

\noindent
Proposition~\ref{RTreg_prp}\ref{cZstr_it}
comes down to the smoothness of the second subspace in~\eref{fMga_e2} 
for a generic pair~$(J,\nu)$.
It   holds for fundamentally the same reasons as
\cite[Theorem~6.2.6]{MS} and \cite[Theorem~3.16]{RT2}.
However, we present these reasons more systematically.
In Section~\ref{DefObs_subs}, we describe a deformation-obstruction setup
for each of the four types of irreducible components~$\Si_v$ of the domain
of maps in this subspace for a typical element \hbox{$(\ga,\vp)\!\in\!\cA(\ov\ga)$}.
By Lemmas~\ref{RTtransC_lmm} and~\ref{RTtransR_lmm}, the deformations of $(J,\nu)$ in~\eref{cHdfn_e} 
supported in an open set~$W$ intersecting the image of~$\Si_v$
cover the obstruction spaces in all four cases.
In Section~\ref{RTregpf_subs}, we show that this lemma implies
the smoothness of the universal moduli space~\eref{UfMdfn_e}.
As usual, the latter in turn implies that 
the second subspace in~\eref{fMga_e2} is smooth   
for a generic element~$(J,\nu)$ of $\cH_{g,l;k}^{\om,\phi}(X)$. 
As explained in the next paragraph, Proposition~\ref{RTreg_prp}\ref{cZstr_it}
implies Proposition~\ref{RTreg_prp}\ref{mainstr_it}.
The proof of Proposition~\ref{RTreg2_prp} is similar.\\

\noindent
Let $\ga_0\!\in\!\cA_{g,l;k}^{\phi}(B)$ denote the unique element with $|\ga_0|\!=\!0$.
For each $\ga\!\in\!\cA_{g,l;k}^{\phi}(B)$, let 
$$\fM_{\ga}^*(J,\nu)\subset\fM_{\ga}(J,\nu)$$
denote the subspace of simple maps in the sense of Definition~\ref{Jnumap_dfn2}.
The subspace in~\eref{fMstar_e} 
is stratified by the subspaces $\fM_{\ga}^*(J,\nu)$ with 
$\ga\!\in\!\cA_{g,l;k}^{\phi}(B)$ such that $|\ga|\!\le\!1$.
For a generic pair~$(J,\nu)$, each of the latter subspaces is cut transversely by
the $(\dbar_J\!-\!\nu)$-operator.
Any standard gluing construction, such as in \cite[Section~3]{LT} or 
\cite[Chapter~10]{MS}, restricts to the real setting and provides a continuous~map
$$\Phi_{\ga}\!: \cN_{\ga}'\lra 
\fM_{\ga}^*(J,\nu)\!\cup\!\fM_{\ga_0}(J,\nu)
\subset \fM_{g,l;k}^{\star}(X,B;J,\nu)^{\phi}\subset\ov\fM_{g,l;k}(X,B;J,\nu)^{\phi}$$
from a neighborhood $\cN_{\ga}'$ of the zero section $\fM_{\ga}^*(J,\nu)$
in the (real) rank~$|\ga|$ bundle of smoothing parameters.
It restricts to the identity on $\fM_{\ga}^*(J,\nu)$ and 
to a diffeomorphism from its complement to
an open subspace of $\fM_{\ga_0}(J,\nu)$.
Thus, the maps~$\Phi_{\ga}$ with $\ga\!\in\!\cA_{g,l;k}^{\phi}(B)$ such that $|\ga|\!=\!1$ extend 
the canonical smooth structure on $\fM_{\ga_0}(J,\nu)$ to
 a smooth structure on the entire subspace in~\eref{fMstar_e}.\\

\noindent
For the remainder of this paper, fix $(X,\om,\phi)$,  $B$, and $g,l,k$  
as in Proposition~\ref{RTreg_prp}, $\ov\ga\!\in\!\cA_{g,l;k}$ as in~\eref{stgadfn_e},  
\hbox{$(\ga,\vp)\!\in\!\cA(\ov\ga)$} with 
\hbox{$\ga\!\in\!\cA_{g',l;k}^{\phi}(B')$} as in~\eref{gadfn_e},
and $\rho\!\in\!\cA(\ga)$.
With $\nV_{\R}(\ga)$ and $\nV_{\C}(\ga)$ as defined in~\eref{nVRCdfn_e}, 
let 
\begin{alignat*}{3}
\ale_{\R}(\ga,\ov\ga)_{\bu}&=
\ale(\ga,\ov\ga)_{\bu}\!\cap\!\nV_{\R}(\ga), &\quad
\ale_{\R}(\ga,\ov\ga)_0&=
\ale(\ga,\ov\ga)_0\!\cap\!\nV_{\R}(\ga), &\quad
\ale_{\R}(\ga,\ov\ga)_0^c&=
\ale(\ga,\ov\ga)_0^c\!\cap\!\nV_{\R}(\ga),\\
\ale_{\C}(\ga,\ov\ga)_{\bu}&=
\ale(\ga,\ov\ga)_{\bu}\!\cap\!\nV_{\C}(\ga),
&\quad \ale_{\C}(\ga,\ov\ga)_0&=
\ale(\ga,\ov\ga)_0\!\cap\!\nV_{\C}(\ga),
&\quad \ale_{\C}(\ga,\ov\ga)_0^c&=
\ale(\ga,\ov\ga)_0^c\!\cap\!\nV_{\C}(\ga).
\end{alignat*}
For each $v\!\in\!\Ver$, let
$$S_{v;\C}(\ga),S_{v;\R}(\ga)\subset\ve^{-1}(v)\subset S_{l;k}\!\sqcup\!\Fl$$
be as in~\eref{Svdfn_e} with $\ov\ga$ replaced by~$\ga$.

\subsection{Spaces of deformations and obstructions}
\label{DefObs_subs}

\noindent
Let $\Si$ be a nodal surface. Its irreducible components~$\Si_v$, the nodes~$z_e$,
and the preimages~$z_f$ of the nodes in the normalization~$\wt\Si$ of~$\Si$ are
indexed by the sets~$\Ver$ of vertices, $\E(\ga)$ of edges, and $\Fl$ of flags, 
respectively, of the dual graph~$\ga$.
We call a continuous map $u\!:\Si\!\lra\!X$ \sf{smooth} if
the restriction~$u_v$ of~$u$ to each irreducible component~$\Si_v$ is smooth.\\

\noindent
If $u_v\!:\Si_v\!\lra\!X$ is a smooth map, let 
$$\Ga(u_v)=\Ga(\Si_v;u_v^*TX).$$
For a complex structure~$\fJ$ on~$\Si_v$
and an almost complex structure~$J$ on~$X$, define
$$\Ga^{0,1}_{J,\fJ}(u_v)=\Ga\big(\Si_v;(T\Si_v,-\fJ)^*\!\otimes_{\C}\!u_v^*(TX,J)\big).$$
For a real map~$u_v$ from~$(\Si_v,\si)$ to~$(X,\phi)$, define
$$\Ga(u_v)^{\phi,\si}
=\big\{\xi\!\in\!\Ga(u_v)\!:\,\tnd\phi\!\circ\!\xi\!=\!\xi\!\circ\!\si\big\}.$$
If in addition $\fJ$ is a complex structure on~$\Si_v$ reversed by~$\si$ and
$J$ is an almost complex structure on~$X$ reversed by~$\phi$, let
$$\Ga^{0,1}_{J,\fJ}(u_v)^{\phi,\si}
=\big\{\eta\!\in\!\Ga^{0,1}_{J,\fJ}(u_v)\!:\,
\tnd\phi\!\circ\!\eta\!=\!\eta\!\circ\!\tnd\si\big\}\,.$$

\vspace{.2in}

\noindent
Denote by $\si_v$ for $v\!\in\!\nV_{\R}(\ga)$ and  $v\!\in\!\nV_{\C}(\ga)$
the involutions on~\eref{pigarhovR_e} and~\eref{pigarhovC_e}, respectively.  
Let 
$$\cT_v\equiv\ker\tnd\pi_{\ga;\rho;v}\lra \R\wt\cU_{\ga;\rho;v}, ~~v\!\in\!\nV_{\R}(\ga),
\qquad
\cT_v^{\bu}\equiv\ker\tnd\pi_{\ga;v}^{\bu}\lra\wt\cU_{\ga;v}^{\bu}, ~~v\!\in\!\nV_{\C}(\ga),$$
be the vertical tangent line bundles with complex structures~$\fJ_v$ and~$\fJ_v^{\bu}$,
respectively.
For $J\!\in\!\cJ_{\om}^{\phi}$, define 
\begin{alignat*}{2}
\Ga^{0,1}_v(X;J)^{\phi}
&=\big\{\nu\!\in\!\Ga\big(\R\wt\cU_v\!\times\!X;
\pi_1^*(\cT_v,-\fJ_v)^*\!\otimes_{\C}\!\pi_2^*(TX,J)\big)\!:
\tnd\phi\!\circ\!\nu\!=\!\nu\!\circ\!\tnd\si_v\big\},
&\quad&v\!\in\!\nV_{\R}(\ga),\\
\Ga^{0,1}_{v;\bu}(X;J)^{\phi}  
&=\big\{\nu\!\in\!\Ga\big(\wt\cU_v^{\bu}\!\times\!X;
\pi_1^*(\cT_v^{\bu},-\fJ_v^{\bu})^*\!\otimes_{\C}\!\pi_2^*(TX,J)\big)\!:
\tnd\phi\!\circ\!\nu\!=\!\nu\!\circ\!\tnd\si_v\big\},
&\quad&v\!\in\!\nV_{\C}(\ga).
\end{alignat*}

\vspace{.2in}

\noindent
Let $v\!\in\!\nV_{\R}(\ga)$.
Denote by $\fB_v$ the space of tuples
\BE{uvRdfn_e}\u_v\equiv \big(u_v\!:\Si_v\!\lra\!X,
(z_f)_{f\in\ve^{-1}(v)},\si,\fJ\big)\EE
so that $(\Si,(z_f)_{f\in\ve^{-1}(v)},\si,\fJ)$ is a fiber of~\eref{pigarhovR_e}
and $u_v$~is
\begin{enumerate}[label=$\bu$,leftmargin=*]

\item   a smooth $(\phi,\si)$-real degree~$\fd(v)$ map if 
$v\!\in\!\nV_{\R}(\ov\ga)\!\cup\!\ale_{\R}(\ga,\ov\ga)_{\bu}$,
\item is  a constant $(\phi,\si)$-real  map if  $v\!\in\!\ale_{\R}(\ga,\ov\ga)_0$.
\end{enumerate}
In the first case, let
\BE{DefObsR_e}\begin{split}
&\Ga(\u_v)=\Ga(u_v)^{\phi,\si}, \qquad 
\Ga_J^{0,1}(\u_v)=\Ga^{0,1}_{J,\fJ}(u_v)^{\phi,\si},\\
&\Ga_0(\u_v)=\big\{\xi\!\in\!\Ga(\u_v)\!:
\xi(z_f)\!=\!0~\forall\,f\!\in\!\ve^{-1}(v)\big\}.
\end{split}\EE
In the second case,
we take $\Ga(\u_v)$ to be the space of constant real sections of $u_v^*TX$
and $\Ga_J^{0,1}(\u_v)$ and $\Ga_0(\u_v)$  to be the zero vector spaces.
For $f\!\in\!\ve^{-1}(v)$, let
$$\ev_f\!:\fB_v\lra\begin{cases}X,&\hbox{if}~f\!\in\!S_{v;\C}(\ga);\\
X^{\phi},&\hbox{if}~f\!\in\!S_{v;\R}(\ga);\end{cases} \qquad
L_f\!:\Ga(\u_v)\lra \begin{cases}T_{\ev_f(\u_v)}X,&\hbox{if}~f\!\in\!S_{v;\C}(\ga);\\
T_{\ev_f(\u_v)}X^{\phi},&\hbox{if}~f\!\in\!S_{v;\R}(\ga);\end{cases}$$
be the evaluation~maps at the marked point~$z_f$ corresponding to~$f$.\\

\noindent
For $J\!\in\!\cJ_{\om}^{\phi}$ and $\nu\!\in\!\Ga^{0,1}_v(X;J)^{\phi}$, 
let $\wt\fM_v(J,\nu)\!\subset\!\fB_v$
be the subspace of tuples~\eref{uvRdfn_e} such~that  
\BE{fMfBcond_e}\dbar_{J,\fJ}u_v\big|_z=\nu\big(z,u_v(z)\big)
\qquad\forall\,z\!\in\!\Si_v\,.\EE
Denote~by
\BE{wtfMfg_e1b}
\fB_v^*\subset\fB_v \qquad\hbox{and}\qquad
\wt\fM_v^*(J,\nu) \subset\wt\fM_v(J,\nu) \EE
\begin{enumerate}[label=$\bu$,leftmargin=*]

\item the subspaces of simple maps if $v\!\in\!\ale_{\R}(\ga,\ov\ga)_{\bu}$,

\item the entire spaces if $v\!\in\!\nV_{\R}(\ov\ga)\!\cup\!\ale_{\R}(\ga,\ov\ga)_0$.

\end{enumerate}
For $\u_v\!\in\!\wt\fM_v(J,\nu)$, let
$$D_{J,\nu;u_v}\!:\Ga(u_v)\lra\Ga^{0,1}_{J,\fJ}(u_v)
\qquad\hbox{and}\qquad
D_{J,\nu;\u_v}^0\!: \Ga_0(\u_v)\lra \Ga_J^{0,1}(\u_v)$$
be the linearization of $\dbar_J\!-\!\nu$ at~$\u_v$
and its restriction.
If \hbox{$v\!\in\!\ale_{\R}(\ga,\ov\ga)$} and $W\!\subset\!X$, define 
$$\wh\Ga_{J,\nu;W}^{0,1}(\u_v)=\Im\,D_{J,\nu;\u_v}^0+
\big\{A\!\circ\!\tnd u_v\!\circ\!\fJ\!: 
A\!\in\!T_J\cJ_{\om}^{\phi},\,\supp(A)\!\subset\!W\big\}.$$
If $v\!\in\!\nV_{\R}(\ov\ga)$, $\io_v\!:\Si_v\!\lra\!\R\wt\cU_{g,l;k}$
is a normalization of a real irreducible component of a fiber of~\eref{RwtcU_e},
and  \hbox{$W\!\subset\!\R\wt\cU_{g,l;k}$}, define
$$\wh\Ga_{J,\nu;W}^{0,1}(\u_v)=\Im\,D_{J,\nu;\u_v}^0+
\Big\{ \big\{\io_v\!\times\!u_v\big\}^*\nu'\!:
\nu'\!\in\!\Ga^{0,1}_{g,l;k}(X;J)^{\phi},\,\supp(\nu')\!\subset\!W\!\times\!X\Big\}.$$

\vspace{.2in}

\noindent
Let $v\!\in\!\nV_{\C}(\ga)$.
Denote by $\fB_v^{\bu}$ the space of tuples
\BE{uvCdfn_e}\u_v^{\bu}\equiv \big(u_v\!\sqcup\!u_{\si(v)}:
\Si_v\!\sqcup\!\Si_{\si(v)}\!\lra\!X,(z_f)_{f\in\ve^{-1}(v)\sqcup\ve^{-1}(\si(v))},\si,\fJ\big)\EE
so that $(\Si_v\!\sqcup\!\Si_{\si(v)},(z_f)_{f\in\ve^{-1}(v)\sqcup\ve^{-1}(\si(v))},\si,\fJ)$
is a fiber of~\eref{pigarhovC_e}, \hbox{$u_{\si(v)}\!=\!\phi\!\circ\!u_v\!\circ\!\si$},
and $u_v$~is 
\begin{enumerate}[label=$\bu$,leftmargin=*]

\item  a smooth degree~$\fd(v)$ map if 
$v\!\in\!\nV_{\C}(\ov\ga)\!\cup\!\ale_{\C}(\ga,\ov\ga)_{\bu}$,

\item a constant  map if  \hbox{$v\!\in\!\ale_{\C}(\ga,\ov\ga)_0$}.

\end{enumerate}
In the first case, let
\BE{DefObsC_e}\begin{split}
\Ga(\u_v^{\bu})&=\big\{(\xi,\xi')\!\in\!\Ga(u_v)\!\oplus\!\Ga(u_{\si(v)})\!:
\xi'\!=\!\tnd\phi\!\circ\!\xi\!\circ\!\si\big\},\\
\Ga_J^{0,1}(\u_v^{\bu})&=\big\{(\eta,\eta')\!\in\!
\Ga^{0,1}_{J,\fJ}(u_v)\!\oplus\!\Ga^{0,1}_{J,\fJ}(u_{\si(v)})\!:
\eta'\!=\!\tnd\phi\!\circ\!\eta\!\circ\!\tnd\si\big\},\\
\Ga_0(\u_v^{\bu})&=\big\{(\xi,\xi')\!\in\!\Ga(\u_v^{\bu})\!:\,
\xi(z_f)\!=\!0~\forall\,f\!\in\!\ve^{-1}(v)\big\}.
\end{split}\EE  
In the second case, we take $\Ga(\u_v^{\bu})$ to be the space of pairs 
$(\xi,\xi')$ as above so that $\xi|_{\Si_v}$ is constant and
$\Ga_J^{0,1}(\u_v^{\bu})$ and $\Ga_0(\u_v^{\bu})$ to be the zero vector spaces.
For \hbox{$f\!\in\!\ve^{-1}(v)\!\cup\!\ve^{-1}(\si(v))$}, define
\begin{alignat*}{2}
\ev_f\!:\fB_v^{\bu}&\lra X,&\quad
\ev_f(\u_v^{\bu})&=
\begin{cases}u_v(z_f) &\hbox{if}~f\!\in\!\ve^{-1}(v);\\
u_{\si(v)}(z_f),&\hbox{if}~f\!\in\!\ve^{-1}(\si(v));\end{cases}\\
L_f\!:\Ga(\u_v^{\bu})&\lra T_{\ev_f(\u_v^{\bu})}X, &\qquad
L_f(\xi,\xi')&=\begin{cases}\xi(z_f),&\hbox{if}~f\!\in\!\ve^{-1}(v);\\
\xi'(z_f),&\hbox{if}~f\!\in\!\ve^{-1}(\si(v)).\end{cases}
\end{alignat*}

\vspace{.2in}

\noindent
For $J\!\in\!\cJ_{\om}^{\phi}$ and $\nu\!\in\!\Ga^{0,1}_{v;\bu}(X;J)^{\phi}$,
let $\wt\fM_v^{\bu}(J,\nu)\!\subset\!\fB_v^{\bu}$
be the subspace of tuples~\eref{uvCdfn_e} satisfying~\eref{fMfBcond_e}.
Denote by
\BE{wtfMfg_e2b}\fB_v^{\bu,*} \subset\fB_v^{\bu} 
\quad\hbox{and}\quad
\wt\fM_v^{\bu,*}(J,\nu)\subset\wt\fM_v^{\bu}(J,\nu) \EE

\begin{enumerate}[label=$\bu$,leftmargin=*]

\item the subspaces of simple maps such that 
$u_v(\Si_v)\!\neq\!u_{\si(v)}(\Si_{\si(v)})$ 
if $v\!\in\!\ale_{\C}(\ga,\ov\ga)_{\bu}$,

\item the entire spaces if $v\!\in\!\nV_{\C}(\ov\ga)\!\cup\!\ale_{\C}(\ga,\ov\ga)_0$.

\end{enumerate}
For $\u_v^{\bu}\!\in\!\wt\fM_v(J,\nu)$, let
$$D_{J,\nu;u_v}^{\bu}\!:\Ga(u_v)\!\oplus\!\Ga\big(u_{\si(v)}\big)
\lra\Ga^{0,1}_{J,\fJ}(u_v)\!\oplus\!\Ga^{0,1}_{J,\fJ}\big(u_{\si(v)}\big)
\quad\hbox{and}\quad
D_{J,\nu;\u_v^{\bu}}^0\!: \Ga_0(\u_v^{\bu})\lra \Ga_J^{0,1}(\u_v^{\bu})$$
be the linearization of $\dbar_J\!-\!\nu$ at~$\u_v^{\bu}$
and its restriction.
If \hbox{$v\!\in\!\ale_{\C}(\ga,\ov\ga)$} and $W\!\subset\!X$, define 
$$\wh\Ga_{J,\nu;W}^{0,1}(\u_v^{\bu})=\Im\,D_{J,\nu;\u_v^{\bu}}^0+
\big\{\big(A\!\circ\!\tnd u_v\!\circ\!\fJ,A\!\circ\!\tnd u_{\si(v)}\!\circ\!\fJ\big)\!: 
A\!\in\!T_J\cJ_{\om}^{\phi},\,\supp(A)\!\subset\!W\big\}.$$
If $v\!\in\!\nV_{\C}(\ov\ga)$, $\io_v^{\bu}\!:\Si_v\!\cup\!\Si_{\si(v)}\lra\!\R\wt\cU_{g,l;k}$
is a normalization of a conjugate pair of irreducible components of a fiber of~\eref{RwtcU_e},
and  \hbox{$W\!\subset\!\R\wt\cU_{g,l;k}$}, let
$$\wh\Ga_{J,\nu;W}^{0,1}(\u_v^{\bu})=\Im\,D_{J,\nu;\u_v^{\bu}}^0+
\Big\{ \big\{\io_v^{\bu}\!\times\!(u_v\!\sqcup\!u_{\si(v)})\big\}^*\nu'\!:
\nu'\!\in\!\Ga^{0,1}_{g,l;k}(X;J)^{\phi},\,\supp(\nu')\!\subset\!W\!\times\!X\Big\}.$$

\begin{lmm}\label{RTtransC_lmm}
Suppose $v\!\in\!\ale_{\C}(\ga,\ov\ga)_{\bu}\!\cup\!\nV_{\C}(\ov\ga)$,
$J\!\in\cJ_{\om}^{\phi}$, \hbox{$\nu\!\in\!\Ga^{0,1}_{v;\bu}(X;J)^{\phi}$},
and \hbox{$\u_v^{\bu}\!\in\!\wt\fM_v^{\bu,*}(J,\nu)$} is as in~\eref{uvCdfn_e}.
If $v\!\in\!\ale_{\C}(\ga,\ov\ga)_{\bu}$, 
let \hbox{$W\!\subset\!X$} be a $\phi$-invariant open subset
intersecting~$u_v(\Si_v)$.
If $v\!\in\!\nV_{\C}(\ov\ga)$, let 
$$\io_v^{\bu}\!:\Si_v\!\cup\!\Si_{\si(v)}\lra\R\wt\cU_{g,l;k}
\qquad\hbox{and}\qquad W\subset\R\wt\cU_{g,l;k}$$ 
be a normalization of a conjugate pair of irreducible components 
of a fiber of~\eref{RwtcU_e} and a $\wt\si_{\R}$-invariant open subset intersecting
$\io_v^{\bu}(\Si_v)$, respectively.
Then $\wh\Ga_{J,\nu;W}^{0,1}(\u_v^{\bu})\!=\!\Ga^{0,1}(\u_v^{\bu})$.
\end{lmm}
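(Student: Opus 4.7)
The plan is the standard duality argument of McDuff-Salamon, adapted to the conjugate-pair setting. After passing to suitable Sobolev completions, $D^0\!:=\!D_{J,\nu;\u_v^{\bu}}^0$ is Fredholm with closed image, so $\wh\Ga_{J,\nu;W}^{0,1}(\u_v^{\bu})$ is a closed subspace of $\Ga^{0,1}(\u_v^{\bu})$; it therefore suffices to show that any $\eta\!=\!(\eta_v,\eta_{\si(v)})$ in its $L^2$-orthogonal complement vanishes. The defining constraint $\eta_{\si(v)}\!=\!\tnd\phi\!\circ\!\eta_v\!\circ\!\tnd\si$ reduces this to $\eta_v\!\equiv\!0$. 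Orthogonality to $\Im D^0$ and elliptic regularity imply that $\eta_v$ satisfies the formal adjoint Cauchy-Riemann-type equation smoothly on $\Si_v$; Aronszajn-Carleman unique continuation then forces either $\eta_v\!\equiv\!0$ or its zero locus to be nowhere dense.

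The main step is to exhibit an open subset of $\Si_v$ on which $\eta_v$ vanishes. Consider first the case \hbox{$v\!\in\!\ale_{\C}(\ga,\ov\ga)_{\bu}$}. Because $\u_v^{\bu}\!\in\!\wt\fM_v^{\bu,*}(J,\nu)$ has $u_v$ simple and \hbox{$u_v(\Si_v)\!\neq\!u_{\si(v)}(\Si_{\si(v)})\!=\!\phi(u_v(\Si_v))$}, a standard generalization of \cite[Proposition~2.5.1]{MS} to inhomogeneous Cauchy-Riemann equations yields an open dense subset \hbox{$Z\!\subset\!u_v^{-1}(W)$} of points $z_0$ at which $u_v$ is an injective immersion with $u_v(z_0)\!\notin\!\phi(u_v(\Si_v))\!\cup\!X^{\phi}$. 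For such $z_0$, choose a $\phi$-symmetric open neighborhood \hbox{$W'\!=\!W'_+\!\sqcup\!\phi(W'_+)\!\subset\!W$} of $\{u_v(z_0),\phi(u_v(z_0))\}$ with $u_v^{-1}(W'_+)$ a single small disk around $z_0$ and $u_{\si(v)}^{-1}(W'_+)\!=\!\eset$. A bump-function construction on $W'_+$, extended to $\phi(W'_+)$ by the rule $\phi^*A\!=\!-A$, produces $A\!\in\!T_J\cJ_{\om}^{\phi}$ supported in $W'$ with $(A\!\circ\!\tnd u_v\!\circ\!\fJ)(z_0)$ an arbitrary prescribed element of the fiber. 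Using the conjugate-pair relations, the orthogonality pairing of $\eta$ with $(A\!\circ\!\tnd u_v\!\circ\!\fJ,A\!\circ\!\tnd u_{\si(v)}\!\circ\!\fJ)$ reduces to a single integral over $\Si_v$ supported in $u_v^{-1}(W'_+)$; shrinking $W'_+$ around $u_v(z_0)$ forces $\eta_v(z_0)\!=\!0$, so $\eta_v\!\equiv\!0$ by the density of~$Z$.

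The case \hbox{$v\!\in\!\nV_{\C}(\ov\ga)$} is strictly easier, since the perturbation $\nu'$ is allowed to depend on a point of $\R\wt\cU_{g,l;k}$. At a generic point $z_0\!\in\!(\io_v^{\bu})^{-1}(W)\!\cap\!\Si_v$ at which $\io_v^{\bu}$ is an embedding, a bump-function in a small neighborhood of $(\io_v^{\bu}(z_0),u_v(z_0))$ in $\R\wt\cU_{g,l;k}\!\times\!X$ produces $\{\io_v^{\bu}\!\times\!(u_v\!\sqcup\!u_{\si(v)})\}^*\nu'$ of arbitrary prescribed value at $z_0$, with no interference from $\Si_{\si(v)}$ since $\io_v^{\bu}(z_0)$ and $\io_v^{\bu}(\si(z_0))$ lie on distinct irreducible components of the fiber of~\eref{RwtcU_e}. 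This immediately forces $\eta_v(z_0)\!=\!0$. The main obstacle in both cases is ensuring that the required $\phi$-equivariance of $A$ (or $\wt\si_{\R}$-equivariance of $\nu'$) does not introduce spurious cancellations at $z_0$; this is precisely what the simplicity condition $u_v(\Si_v)\!\neq\!u_{\si(v)}(\Si_{\si(v)})$ and the distinctness of the conjugate components $\Si_v$ and $\Si_{\si(v)}$ secure.
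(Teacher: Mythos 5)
Your argument follows essentially the same route as the paper's proof: pass to a single member of the conjugate pair (you via the relation $\eta_{\si(v)}\!=\!\tnd\phi\!\circ\!\eta_v\!\circ\!\tnd\si$, the paper via the intertwining projections onto the $v$-component), use Serre-type duality to identify the cokernel with the kernel of the formal adjoint acting on $(1,0)$-forms, invoke unique continuation to reduce to exhibiting one open set where $\eta_v$ vanishes, and then use simplicity plus $u_v(\Si_v)\!\neq\!\phi(u_v(\Si_v))$ to manufacture a $\phi$-antiinvariant perturbation~$A$ supported where only the $W'$-half of $W$ meets the image, so the equivariance does not produce a cancelling second term. The only cosmetic difference is that the paper's dual form lives in $\Ga^{1,0}(\u_v)$ with prescribed simple poles at the special points (citing Shevchishin and Ivashkovich--Shevchishin), which packages the elliptic-regularity and pole-behavior step you gesture at more informally; otherwise the content matches.
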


\begin{proof}
Denote~by 
$$D_{J,\nu;u_v}\!:\Ga(u_v)\lra \Ga^{0,1}_{J,\fJ}(u_v), \quad
D_{J,\nu;\u_v}^0\!:\Ga_0(\u_v)\!\equiv\!\big\{\xi\!\in\!\Ga(u_v)\!:\,
\xi(z_f)\!=\!0~\forall\,f\!\in\!\ve^{-1}(v)\big\}\lra \Ga^{0,1}_{J,\fJ}(u_v)$$
the restrictions of~$D_{J,\nu;\u_v}^{\bu}$ and~$D_{J,\nu;\u_v^{\bu}}^0$, respectively.
If $v\!\in\!\ale_{\C}(\ga,\ov\ga)$, define
$$\wh\Ga_{J,\nu;W}^{0,1}(\u_v)=\Im\,D_{J,\nu;\u_v}^0+
\big\{A\!\circ\!\tnd u_v\!\circ\!\fJ\!: A\!\in\!T_J\cJ_{\om}^{\phi},\,
\supp(A)\!\subset\!W\big\}.$$
If $v\!\in\!\nV_{\C}(\ov\ga)$, define
$$\wh\Ga_{J,\nu;W}^{0,1}(\u_v)=\Im\,D_{J,\nu;\u_v}^0+
\Big\{\big\{\io_v^{\bu}|_{\Si_v}\!\times\!u_v\big\}^*\nu'\!:
\nu'\!\in\!\Ga^{0,1}_{g,l;k}(X;J)^{\phi},\,\supp(\nu')\!\subset\!W\Big\}.$$

\vspace{.2in}

\noindent
The projections
$$\Ga_0(\u_v^{\bu})\lra\Ga_0(\u_v) \qquad\hbox{and}\qquad 
\Ga^{0,1}_{J,\fJ}(\u_v^{\bu})\lra \Ga^{0,1}_{J,\fJ}(u_v)$$
are isomorphisms intertwining $D_{J,\nu;\u_v^{\bu}}^0$ and $D_{J,\nu;\u_v}^0$.
The second projection maps $\wh\Ga_{J,\nu;W}^{0,1}(\u_v^{\bu})$ onto $\wh\Ga_{J,\nu;W}^{0,1}(\u_v)$.
Thus, the claim of the lemma is equivalent to 
\hbox{$\wh\Ga_{J,\nu;W}^{0,1}(\u_v)\!=\!\Ga^{0,1}(\u_v)$}.\\

\noindent
Denote~by
$$\Ga^{1,0}(\u_v)\equiv\Ga_J^{1,0}(\u_v)\subset 
\Ga\big(\Si_v\!-\!\ve^{-1}(v);(T\Si_v,\fJ)^*\!\otimes_{\C}\!u_v^*(TX,J)^*\big)$$
the subspace of smooth sections~$\eta$ with at most simple poles at the points 
of~$\ve^{-1}(v)$.
In other words, for every $z_0\!\in\!\ve^{-1}(v)$ there exists a holomorphic coordinate~$w$
on a neighborhood~$U$ of~$z_0$ in~$\Si_v$ such that 
$$w(z_0)=0 \qquad\hbox{and}\qquad 
w\cdot\eta|_U\in \Ga\big(U;(T\Si_v,\fJ)^*\!\otimes_{\C}\!u^*(TX,J)^*\big).$$
By \cite[Lemma~2.4.1]{Sh} and \cite[Lemma~2.3.2]{IvSh}, the cokernel of~$D_{J,\nu;\u_v}^0$ 
is isomorphic to the kernel of the formal adjoint $D_{\u_v}^*$ of~$D_{J,\nu;u_v}$
on $\Ga^{1,0}(\u_v)$ via the standard pairing of $(0,1)$- and $(1,0)$-forms
on~$\Si_v$; see also \cite[Sections~2.1,2.2]{FanoGV}.\\

\noindent
Let $\eta\!\in\!\ker D_{\u_v}^*\!-\!\{0\}$.
The only property of~$D_{\u_v}^*$ relevant for our purposes~is  
\begin{enumerate}[label=(P\arabic*),leftmargin=*]

\item\label{vaneta_it} $\eta$ does not vanish
on any non-empty open subset of~$\Si_v$.

\end{enumerate}
As with \cite[Proposition~3.2.1]{MS} and \cite[Proposition~3.2]{RT2},
the proof comes down to showing that~$\eta$ pairs non-trivially with some 
element of $T_J\cJ_{\om}^{\phi}$ if $v\!\in\!\ale_{\C}(\ga,\ov\ga)_{\bu}$
and of $\Ga^{0,1}_{g,l;k}(X;J)^{\phi}$ if $v\!\in\!\nV_{\C}(\ov\ga)$.\\

\noindent
Suppose $v\!\in\!\ale_{\C}(\ga,\ov\ga)_{\bu}$.
Since $u_v$ is simple and \hbox{$\phi(u_v(\Si_v))\!\neq\!u_v(\Si_v)$},
we can assume that there exist non-empty open subsets
$U\!\subset\!\Si_v$  and $W'\!\subset\!X$
such~that $u_v|_U$ is an embedding, 
\BE{RTtrans_e5} W=W'\sqcup \phi(W'), \qquad
u_v(\Si_v)\cap W'=u_v(U), \qquad u_v(\Si_v)\cap\phi(W')=\eset\,.\EE
The proof of \cite[Proposition~3.2.1]{MS} provides $A'\!\in\!T_J\cJ_{\om}$ 
such~that 
\BE{RTtrans_e7}\supp(A')\subset W' \qquad\hbox{and}\qquad
\int_{\Si_v}\!\!\big(A'\!\circ\!\tnd u_v\!\circ\!\fJ\big)\!\w\!\eta\neq0\,.\EE
We define $A\!\in\!T_J\cJ_{\om}^{\phi}$ by 
\BE{RTtrans_e9}A|_{W'}=A', \qquad A|_{\phi(W')}=-\phi^*A', \qquad A|_{X-W}=0\,.\EE
By the last assumption in~\eref{RTtrans_e5}, $A$ pairs non-trivially with~$\eta$.\\

\noindent
Let $v\!\in\!\ov\nV_{\C}(\ov\ga)$.
Since $\io_v^{\bu}$ is injective outside of finitely many points, 
we can assume that there exist
non-empty open subsets
$U\!\subset\!\Si_v$ and $W'\!\subset\!\R\wt\cU_{g,l;k}$
such~that 
\BE{RTtrans_e25}
W=W'\sqcup \wt\si_{\R}(W'), \quad 
\io_v^{\bu}(\Si_v)\cap W'= \io_v^{\bu}(U), \quad
\io_v^{\bu}(\Si_v)\cap\wt\si_{\R}(W')=\eset.\EE
The reasoning in the proof of \cite[Proposition~3.2.1]{MS} 
applied in the setting of \cite[Proposition~3.2]{RT2} provides
$\nu''\!\in\!\Ga^{0,1}_{g,l;k}(X;J)$
such~that 
\BE{RTtrans_e27}\supp(\nu'')\subset W'\!\times\!X \qquad\hbox{and}\qquad
\int_{\Si_v}\!\!\big(\{\io_v^{\bu}|_{\Si_v}\!\times\!u_v\}^*\nu''\big)\!\w\!\eta\neq0\,.\EE
We define $\nu'\!\in\!\Ga^{0,1}_{g,l;k}(X;J)^{\phi}$ by 
\BE{RTtrans_e29}\nu'|_{W'\times X}=\nu'', \quad 
\nu'|_{\wt\si_{\R}(W')\times X}=\big\{\wt\si_{\R}\!\times\!\phi\big\}^*\nu'', 
\quad \nu'|_{(\R\wt\cU_{g,l;k}-W)\times X}=0\,.\EE
By the last assumption in~\eref{RTtrans_e25}, $\nu'$ pairs non-trivially with~$\eta$.
\end{proof}

\begin{lmm}\label{RTtransR_lmm}
Suppose $v\!\in\!\ale_{\R}(\ga,\ov\ga)_{\bu}\!\cup\!\nV_{\R}(\ov\ga)$,
$J\!\in\cJ_{\om}^{\phi}$, \hbox{$\nu\!\in\!\Ga^{0,1}_v(X;J)^{\phi}$},
and \hbox{$\u_v\!\in\!\wt\fM_v^*(J,\nu)$} is as in~\eref{uvRdfn_e}.
If $v\!\in\!\ale_{\R}(\ga,\ov\ga)_{\bu}$, 
let \hbox{$W\!\subset\!X$} be a $\phi$-invariant open subset
intersecting~$u_v(\Si_v)$.
If $v\!\in\!\nV_{\R}(\ov\ga)$, let 
$$\io_v\!:\Si_v\lra\R\wt\cU_{g,l;k} \qquad\hbox{and}\qquad W\subset\R\wt\cU_{g,l;k}$$ 
be a normalization of a real irreducible component
of a fiber of~\eref{RwtcU_e} and a $\wt\si_{\R}$-invariant open subset intersecting
$\io_v(\Si_v)$, respectively.
Then $\wh\Ga_{J,\nu;W}^{0,1}(\u_v)\!=\!\Ga^{0,1}(\u_v)$.
\end{lmm}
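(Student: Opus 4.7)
The plan is to mimic the structure of the proof of Lemma~\ref{RTtransC_lmm}, reducing surjectivity to a non-vanishing pairing statement. By adapting \cite[Lemma~2.4.1]{Sh} and \cite[Lemma~2.3.2]{IvSh} to the $(\phi,\si)$-equivariant setting (cf.~\cite[Sections~2.1,2.2]{FanoGV}), the cokernel of $D^0_{J,\nu;\u_v}$ is identified with the kernel of the formal adjoint $D_{\u_v}^*$ on a suitable subspace $\Ga^{1,0}(\u_v)^{\phi,\si}$ of $(1,0)$-forms with at most simple poles at $\ve^{-1}(v)$ that are $(\phi,\si)$-antiequivariant. It will suffice to take a non-zero $\eta$ in this kernel and produce either $A\!\in\!T_J\cJ_{\om}^{\phi}$ with $\supp(A)\!\subset\!W$ or $\nu'\!\in\!\Ga^{0,1}_{g,l;k}(X;J)^{\phi}$ with $\supp(\nu')\!\subset\!W\!\times\!X$ whose pairing with~$\eta$ is non-zero; as in the complex case, such an $\eta$ vanishes on no open subset of~$\Si_v$.

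The key new ingredient, replacing the disjointness of $\Si_v$ and $\Si_{\si(v)}$ used in Lemma~\ref{RTtransC_lmm}, is that $\si$ is an anti-holomorphic involution on~$\Si_v$, so the fixed locus $\Si_v^{\si}$ is a union of circles and its complement is open and dense. Combined with the simplicity of~$u_v$ and the non-vanishing properties of~$\eta$, this allows me to select $z_0\!\in\!\Si_v\!-\!\Si_v^{\si}$ such that $u_v$ is an embedding on a neighborhood of~$z_0$, $\eta(z_0)\!\neq\!0$, $u_v(z_0)\!\not\in\!X^{\phi}$ (so $u_v(z_0)\!\neq\!u_v(\si(z_0))$), and $u_v(z_0)\!\in\!W$ (respectively $\io_v(z_0)\!\in\!W$). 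I can then choose disjoint open neighborhoods $U\!\subset\!\Si_v$ of $z_0$ and $W'\!\subset\!W$ of $u_v(z_0)$ (respectively $\io_v(z_0)$) so~that
\begin{gather*}
U\cap\si(U)=\eset, \qquad W'\cap\phi(W')=\eset\quad (\hbox{resp.}~W'\cap\wt\si_{\R}(W')=\eset),\\
u_v^{-1}(W')=U\quad (\hbox{resp.}~\io_v^{-1}(W')=U).
\end{gather*}

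With these disjointness properties in hand, I apply the standard construction of \cite[Proposition~3.2.1]{MS} (or its analogue used in the proof of Lemma~\ref{RTtransC_lmm} for the $v\!\in\!\nV_{\R}(\ov\ga)$ case) to produce $A'\!\in\!T_J\cJ_{\om}$ supported in $W'$ (respectively $\nu''\!\in\!\Ga^{0,1}_{g,l;k}(X;J)$ supported in $W'\!\times\!X$) whose pairing with~$\eta|_U$ is non-zero. Symmetrizing as in~\eref{RTtrans_e9} (respectively~\eref{RTtrans_e29}) produces an element $A\!\in\!T_J\cJ_{\om}^{\phi}$ supported in $W'\!\sqcup\!\phi(W')$ (respectively $\nu'\!\in\!\Ga^{0,1}_{g,l;k}(X;J)^{\phi}$ supported in $(W'\!\sqcup\!\wt\si_{\R}(W'))\!\times\!X$).

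The main obstacle is then showing that the resulting pairing
$\int_{\Si_v}\bigl(A\!\circ\!\tnd u_v\!\circ\!\fJ\bigr)\!\w\!\eta$
(and its analogue with~$\nu'$) is non-zero. Since $U$ and $\si(U)$ are disjoint, this integral splits as a sum of contributions over~$U$ and over~$\si(U)$. The contribution over~$U$ equals the original MS pairing, which is non-zero by construction. The contribution over~$\si(U)$ is, by the $(\phi,\si)$-antiequivariance of~$\eta$ and the construction of~$A$ (respectively~$\nu'$), the complex conjugate of the $U$-contribution. Because the pairing on the $(\phi,\si)$-equivariant subspaces is automatically real-valued (or, dually, $\eta$ is chosen so that its equivariant pairings are real), the two contributions add to give twice the real part of a non-zero complex number, hence a non-zero pairing. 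This yields $\eta\!\in\!\wh\Ga^{0,1}_{J,\nu;W}(\u_v)^{\perp}\!\Rightarrow\!\eta\!=\!0$, completing the proof.
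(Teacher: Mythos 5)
Your overall strategy matches the paper's: identify the cokernel of $D^0_{J,\nu;\u_v}$ with the kernel of $D^*_{\u_v}$ on a $(\phi,\si)$-anti-equivariant subspace $\Ga^{1,0}(\u_v)$, choose good open sets $U$ and $W'$ with $U\cap\si(U)=\eset$ and $W'\cap\phi(W')=\eset$, apply the McDuff--Salamon construction on $U$, symmetrize via~\eref{RTtrans_e9} (resp.~\eref{RTtrans_e29}), and show the resulting pairing is nonzero.

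However, the last step of your argument has a genuine gap. You assert that the $\si(U)$-contribution is ``the complex conjugate of the $U$-contribution'' and conclude that the total is ``twice the real part of a non-zero complex number, hence a non-zero pairing.'' This inference is invalid as written: a non-zero complex number can have zero real part, so ``$2\,\mathrm{Re}(z)\neq0$ because $z\neq0$'' is false. You hedge by also asserting that ``the pairing... is automatically real-valued,'' which would rescue the argument, but you give no justification for that claim and offer an alternative phrasing that suggests you are unsure which holds. The paper avoids this issue entirely: using property~\ref{realeta_it} (the anti-equivariance $\tnd\phi\circ\eta=-\eta\circ\tnd\si$) together with the $\phi$-anti-equivariance of $A$ and the $(\phi,\si)$-equivariance of $u_v$, it establishes the pointwise identity~\eref{RTtrans_e15}
$$\bigl((A\circ\tnd u_v\circ\fJ)\wedge\eta\bigr)\big|_{\si(U)}=-\si^*\Bigl(\bigl((A\circ\tnd u_v\circ\fJ)\wedge\eta\bigr)\big|_U\Bigr).$$
Integrating and using that $\si$ is orientation-reversing, the sign from the $-\si^*$ cancels the sign from the change of orientation, so $\int_{\si(U)}=\int_U$ \emph{exactly} (not merely conjugate), giving~\eref{RTtrans_e17}: the total pairing equals $2\int_U(A'\circ\tnd u_v\circ\fJ)\wedge\eta\neq0$. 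To close your argument you should either carry out this computation, or actually establish (not just assert) that the relevant Serre-duality pairing is real-valued, in which case ``conjugate'' becomes ``equal'' and the conclusion follows.
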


\begin{proof}
With $\Ga_J^{1,0}(\u_v)$ as in the proof of Lemma~\ref{RTtransC_lmm}, define
$$\Ga^{1,0}(\u_v)=
\{\eta\!\in\!\Ga_J^{1,0}(\u_v)\!:\,\tnd\phi\!\circ\!\eta\!=-\eta\!\circ\!\tnd\si\}.$$
Since the involution~$\si$ on~$\Si_v$ is orientation-reversing,
the pairing of $(0,1)$- and $(1,0)$-forms on~$\Si_v$ of Lemma~\ref{RTtransC_lmm}
restricts to a pairing between the invariant subspaces $\Ga_J^{0,1}(\u_v)$ and
$\Ga^{1,0}(\u_v)$ and induces an isomorphism between 
the cokernel of~$D_{J,\nu;\u_v}^0$ and the kernel of $D_{\u_v}^*$ 
on $\Ga^{1,0}(\u_v)$ as before.\\

\noindent
Let $\eta\!\in\!\ker D_{\u_v}^*\!-\!\{0\}$. 
The only properties of~$D_{\u_v}^*$ relevant for our purposes~are \ref{vaneta_it} and
\begin{enumerate}[label=(P\arabic*),leftmargin=*]

\setcounter{enumi}{1}

\item\label{realeta_it} $\tnd\phi\!\circ\!\eta\!=\!-\eta\!\circ\!\tnd\si$.

\end{enumerate}
We show that~$\eta$ pairs non-trivially with some 
element of $T_J\cJ_{\om}^{\phi}$ if $v\!\in\!\ale_{\R}(\ga,\ov\ga)_{\bu}$
and of $\Ga^{0,1}_{g,l;k}(X;J)^{\phi}$ if $v\!\in\!\nV_{\R}(\ov\ga)$.\\

\noindent
Suppose $v\!\in\!\ale_{\R}(\ga,\ov\ga)_{\bu}$;
thus, $\phi\!\circ\!u_v\!=\!u_v\!\circ\!\si$.
Since $u_v$ is simple, we can assume there exist non-empty open subsets \hbox{$U\!\subset\!\Si_v$} 
and $W'\!\subset\!X$ satisfying the  condition before~\eref{RTtrans_e5} and
the first two conditions in~\eref{RTtrans_e5}.
Let $A'\!\in\!T_J\cJ_{\om}$ be as in~\eref{RTtrans_e7}
and define $A\!\in\!T_J\cJ_{\om}^{\phi}$ by~\eref{RTtrans_e9}.
By~\ref{realeta_it},
\BE{RTtrans_e15}\big( (A\!\circ\!\tnd u_v\!\circ\!\fJ)\!\w\!\eta\big)\big|_{\si(U)}
=-\si^*\Big(\big( (A\!\circ\!\tnd u_v\!\circ\!\fJ)\!\w\!\eta\big)\big|_U\Big)\,.\EE
We conclude that 
\BE{RTtrans_e17}\int_{\Si_v}\!\!\big(A\!\circ\!\tnd u_v\!\circ\!\fJ\big)\!\w\!\eta
=\int_U\!\!\big(A\!\circ\!\tnd u_v\!\circ\!\fJ\big)\!\w\!\eta
+\int_{\si(U)}\!\!\big(A\!\circ\!\tnd u_v\!\circ\!\fJ\big)\!\w\!\eta
=2\int_U\!\!\big(A'\!\circ\!\tnd u_v\!\circ\!\fJ\big)\!\w\!\eta\neq0,\EE
i.e.~$A$ pairs non-trivially with~$\eta$.\\

\noindent
Let $v\!\in\!\nV_{\R}(\ov\ga)$.
We can assume that there exist non-empty open subsets
\hbox{$U\!\subset\!\Si_v$} and \hbox{$W'\!\subset\!\R\wt\cU_{g,l;k}$}
satisfying the first two conditions in~\eref{RTtrans_e25} with~$\io_v^{\bu}$
replaced by~$\io_v$. 
Let \hbox{$\nu''\!\in\!\Ga^{0,1}_{g,l;k}(X;J)$} be as in~\eref{RTtrans_e27}
and define \hbox{$\nu'\!\in\!\Ga^{0,1}_{g,l;k}(X;J)^{\phi}$} by~\eref{RTtrans_e29}.
By~\ref{realeta_it}, \eref{RTtrans_e15} and~\eref{RTtrans_e17} 
hold with $A\!\circ\!\tnd u_v\!\circ\!\fJ$ replaced by 
$\{\io_v\!\times\!u_v\}^*\nu'$.
\end{proof}

\subsection{Universal moduli spaces}
\label{RTregpf_subs}

\noindent
Let $\E_{\C}(\ga),\E_{\R\C}(\ga),\E_{\R\R}(\ga)\!\subset\!\E(\ga)$
be as defined in Section~\ref{RTstr_subs}.
Choose \hbox{$\E_+(\ga)\!\subset\!\E_{\C}(\ga)$}
so~that
\BE{Eplus_e}\E_{\C}(\ga)=\E_+(\ga)\sqcup\si\big(\E_+(\ga)\big).\EE
For each $e\!\in\!\E_{\R\C}(\ga)$, choose a flag $f_e\!\in\!e$.\\

\noindent
For $e\!\in\!\E_+(\ga)$ and $e\!\in\!\E_{\R}(\ga)$, let
$$\De_{\ga;e}\subset X_{\ga;e}\!\equiv\!\prod_{f\in e}\!X \qquad\hbox{and}\qquad
\De_{\ga;e}^{\phi}\subset X_{\ga;e}^{\phi}\!\equiv\!\prod_{f\in e}\!X^{\phi}$$
be the respective diagonals. 
Define
\BE{DeXdfn_e}
\De_{\ga}\equiv \prod_{e\in\E_+(\ga)}\!\!\!\!\!\!\De_{\ga;e}
\times\prod_{e\in\E_{\R\C}(\ga)}\!\!\!\!\!\!X^{\phi}\times
\prod_{e\in\E_{\R\R}(\ga)}\!\!\!\!\!\!\!\De_{\ga;e}^{\phi}
\subset X_{\ga}\equiv \prod_{e\in\E_+(\ga)}\!\!\!\!\!\!X_{\ga;e}\times
\prod_{e\in\E_{\R\C}(\ga)}\!\!\!\!\!\!\!X\times
\prod_{e\in\E_{\R\R}(\ga)}\!\!\!\!\!\!\!X_{\ga;e}^{\phi}\,.\EE
The evaluation maps~$\ev_f$ induce a map
\BE{evgadfn_e} 
\ev_{\ga}\!\equiv\!\prod_{e\in\E_+(\ga)}\prod_{f\in e}\!\ev_f\times
\prod_{e\in\E_{\R\C}(\ga)}\!\!\!\!\!\!\!\ev_{f_e}\times
\prod_{e\in\E_{\R\R}(\ga)}\prod_{f\in e}\!\ev_f \!:
\prod_{v\in\nV_{\R}(\ga)}\!\!\!\!\!\!\fB_v^* \times\!\!
\prod_{\{v,\si(v)\}\subset\nV_{\C}(\ga)}\!\!\!\!\!\!\!\!\!\!\!\!\!\!\fB_v^{\bu,*}
\lra X_{\ga}\EE
Define
\BE{fBstrdfn_e}\begin{split}
\fB_{\ga,\vp;\rho}^{\star}&=\big\{\u\in
\prod_{v\in\nV_{\R}(\ga)}\!\!\!\!\!\!\fB_v^* \times\!\!
\prod_{\{v,\si(v)\}\subset\nV_{\C}(\ga)}\!\!\!\!\!\!\!\!\!\!\!\!\!\!\fB_v^{\bu,*}\!:
\ev_{\ga}(\u)\!\in\!\De_{\ga}\big\}, \\
\fB_{\ga,\vp;\rho}^*&=\big\{\u\!\in\!\fB_{\ga,\vp;\rho}^{\star}\!:
u_{v_1}(\Si_{v_1})\!\neq\!u_{v_2}(\Si_{v_2})~\forall\,v_1,v_2\!\in\!\ale(\ga,\ov\ga)_{\bu},
v_1\!\neq\!v_2\big\}\,.
\end{split}\EE

\vspace{.2in}

\noindent
An element of $\fB_{\ga,\vp;\rho}^{\star}$ corresponds to a tuple 
\BE{bfudfn_e}\u\equiv \big(u\!:\Si\!\lra\!X,(z_f)_{f\in S_{l;k}},\si,\fJ\big)
\equiv \big((\u_v)_{v\in\nV_{\R}(\ga)},
(\u_v^{\bu})_{\{v,\si(v)\}\subset\nV_{\C}(\ga)}\big)
\equiv \big(\u_v\big)_{v\in\Ver},\EE
where $(\Si,(z_f)_{f\in S_{l;k}},\si,\fJ)$
is a fiber of~\eref{pigarho_e} and $u$ is a smooth $(\phi,\si)$-real map such~that  
$$ u_*[\Si_v]=\fd(v)\in H_2(X;\Z) \quad\forall\,v\!\in\!\Ver\,,
\qquad u|_{\Si_v}=\tn{const}\quad\forall\,v\!\in\!\ale(\ga,\ov\ga)_0.$$
For a tuple~$\u$ as above, let
\BE{fFfibdfn_e1}
\Ga_0(\u)=\bigoplus_{v\in\nV_{\R}(\ga)}\!\!\!\!\!\Ga_0(\u_v)\!\oplus\!\!
\bigoplus_{\{v,\si(v)\}\subset\nV_{\C}(\ga)}\hspace{-.32in}\Ga_0(\u_v^{\bu}).
\EE
If in addition $J\!\in\!\cJ_{\om}^{\phi}$, let
\BE{fFfibdfn_e2}
\Ga_J^{0,1}(\u)=\bigoplus_{v\in\nV_{\R}(\ga)}\!\!\!\!\!\Ga_J^{0,1}(\u_v)\!\oplus\!\!
\bigoplus_{\{v,\si(v)\}\subset\nV_{\C}(\ga)}\hspace{-.32in}\Ga_J^{0,1}(\u_v^{\bu}).\EE

\vspace{.2in}

\noindent
Denote by 
\BE{fFbndldfn_e}\fF\lra\cH_{g,l;k}^{\om,\phi}(X)\!\times\!\fB_{\ga,\vp;\rho}^*\EE
the bundle with fibers $\fF_{(J,\nu;\u)}\!=\!\Ga^{0,1}_J(\u)$.
We define a section of this bundle
\BE{dbardfn_e}\dbar\!: \cH_{g,l;k}^{\om,\phi}(X)\!\times\!\fB_{\ga,\vp;\rho}^* \lra\fF
\qquad\hbox{by}\quad
\dbar(J,\nu;\u)\big|_z=\dbar_{J,\fJ}u\big|_z-\nu_{\ga,\vp;\rho}\big(z,u(z)\big)
~~\forall\,z\!\in\!\Si\,.\EE
The zero set of this section 
\BE{UfMdfn_e} \fU\wt\fM_{\ga,\vp;\rho}^*(J,\nu)
\equiv\big\{(J,\nu;\u)\!\in\!\cH_{g,l;k}^{\om,\phi}(X)\!\times\!\fB_{\ga,\vp;\rho}^*\!:
\dbar(J,\nu;\u)\!=\!0\big\}\EE
is the \sf{universal moduli space}.
The preimage of a pair $(J,\nu)$ in $\cH_{g,l;k}^{\om,\phi}(X)$ under the projection
\BE{pifUfMdfn_e} \pi\!: 
\fU\wt\fM_{\ga,\vp;\rho}^*(J,\nu)\lra\cH_{g,l;k}^{\om,\phi}(X)\EE
is the second subspace in~\eref{fMga_e2}.\\

\noindent
For the purposes of applying the Sard-Smale Theorem \cite[(1.3)]{SardSmale},
we complete 
\begin{enumerate}[label=$\bu$,leftmargin=*]

\item the map components of the spaces $\fB_v$ and $\fB_v^{\bu}$
and the spaces~\eref{fFfibdfn_e1} in the $L^p_2$ Sobolev norm for
some \hbox{$p\!>\!2$} (fixed),

\item the parameter space~\eref{cHdfn_e} in the $C^m$ H\"older norm for some $m\!\ge\!2$
(to be chosen later), and

\item the spaces~\eref{fFfibdfn_e2} in the $L^p_1$ Sobolev norm.

\end{enumerate}
We denote the completed spaces and the induced spaces 
in~\eref{wtfMfg_e1b},  \eref{wtfMfg_e2b},
\eref{fBstrdfn_e}-\eref{fFbndldfn_e}, and~\eref{UfMdfn_e} as before.\\

\noindent
By the assumption $p\!>\!2$ and the Sobolev Embedding Theorem \cite[Theorem~B.1.12]{MS}, 
the map component of any element of the completed spaces~$\fB_v$ 
and~$\fB_v^{\bu}$ 
is a $C^1$-map. By the reasoning at the top of \cite[p47]{MS}, these completions are 
smooth separable Banach manifolds.
By the reasoning at the bottom of \cite[p49]{MS}, the (completed) parameter space
$\cH_{g,l;k}^{\om,\phi}(X)$ is a smooth separable Banach manifold.
By the reasoning at the bottom of \cite[p50]{MS}, 
\eref{dbardfn_e} is the restriction of a $C^{m-2}$ section of $C^{m-2}$ 
Banach bundle~\eref{fFbndldfn_e} over the product of the spaces~$\fB_v$
and~$\fB_v^{\bu}$.

\begin{prp}\label{UfM_prp}
Let $p$ and $m$ be as above. Then 
\BE{UfMprp_e1}\fU\wt\fM_{\ga,\vp;\rho}^*(J,\nu) \subset 
\cH_{g,l;k}^{\om,\phi}(X)\times 
\prod_{v\in\nV_{\R}(\ga)}\!\!\!\!\!\!\fB_v \times\!\!
\prod_{\{v,\si(v)\}\subset\nV_{\C}(\ga)}\!\!\!\!\!\!\!\!\!\!\!\!\!\!\fB_v^{\bu}\EE
is a separable $C^{m-2}$  Banach submanifold and the projection~\eref{pifUfMdfn_e}
is a $C^{m-2}$ Fredholm~map of~index
\BE{UfMprp_e2}\ind_{\R}\pi=\dim_{g',l;k}(B')-|\ga|
+n\,\ell\big(\ga,\ale(\ga,\ov\ga)_0\big)+\dim_{\R}\!G_{\ga;\rho}\,.\EE
\end{prp}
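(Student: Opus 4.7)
The plan is to show that the section $\dbar$ of~\eref{dbardfn_e} vanishes transversely, so that the implicit function theorem~\cite[Theorem~A.3.3]{MS} endows~\eref{UfMprp_e1} with the structure of a separable $C^{m-2}$ Banach submanifold, and then to identify the Fredholm index of~\eref{pifUfMdfn_e} by a vertex-by-vertex Riemann-Roch count. Fix a point $(J,\nu;\u)\in\fU\wt\fM_{\ga,\vp;\rho}^*(J,\nu)$, decompose $\u$ as in~\eref{bfudfn_e}, and write the vertical linearization of~$\dbar$ as
$$\Phi(A,\mu,\xi)=D_{J,\nu;\u}(\xi)+\tfrac{1}{2}\,A\!\circ\!\tnd u\!\circ\!\fJ-\bigl\{q_{\ga,\vp;\rho}\!\times\!\id_X\bigr\}^*\mu,$$
where $D_{J,\nu;\u}=\bigoplus_v D_{J,\nu;\u_v}$ assembles the per-component linearizations, the parameters $(A,\mu)$ range over $T_J\cJ_{\om}^{\phi}\oplus\Ga^{0,1}_{g,l;k}(X;J)^{\phi}$, and $\xi$ varies the map, the complex structure on the domain, and the positions of the marked points, modulo the reparametrization group $G_{\ga;\rho}$.

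The first and principal step is surjectivity of $\Phi$ with bounded right inverse. By the elliptic duality used in the proofs of Lemmas~\ref{RTtransC_lmm} and~\ref{RTtransR_lmm}, any element of $\tn{coker}\,\Phi$ is represented by a tuple $(\eta_v)$ of sections in the kernels of the formal adjoints $D^*_{\u_v}$, each of which vanishes on no non-empty open subset of $\Si_v$ and, on real components, satisfies the appropriate anti-equivariance. Suppose such an $\eta$ is nonzero and pick a vertex~$v$ with $\eta_v\!\neq\!0$. If $v\!\in\!\ale(\ga,\ov\ga)_\bu$, the simplicity condition built into~\eref{fBstrdfn_e} together with the somewhere-injective property of $u_v$ permits the choice of a $\phi$-invariant open $W\!\subset\!X$ meeting $u_v(\Si_v)$ but disjoint from $u_{v'}(\Si_{v'})$ for every other $v'\!\in\!\ale(\ga,\ov\ga)_\bu$ and from the images of all special points; Lemma~\ref{RTtransR_lmm} (respectively Lemma~\ref{RTtransC_lmm}) then supplies an $A\!\in\!T_J\cJ_{\om}^{\phi}$ supported in~$W$ pairing non-trivially with $\eta_v$ and trivially with every other $\eta_{v'}$, contradicting $\eta\!\in\!\tn{coker}\,\Phi$. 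If $v\!\in\!\ov\Ver$, then distinct real or conjugate-pair components of each fiber of~\eref{RwtcU_e} have disjoint images in $\R\wt\cU_{g,l;k}$, so one can select a $\wt\si_{\R}$-invariant open $W\!\subset\!\R\wt\cU_{g,l;k}$ meeting only the image corresponding to~$v$, and the same lemmas furnish a $\mu\!\in\!\Ga^{0,1}_{g,l;k}(X;J)^{\phi}$ supported in $W\!\times\!X$ with the analogous contradictory pairing.

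Once surjectivity is in hand, the implicit function theorem yields the Banach submanifold structure on~\eref{UfMprp_e1}, and $\pi$ is automatically Fredholm of index equal to that of the restriction of $\Phi$ to the $\xi$-direction---equivalently, of $D_{J,\nu;\u}$ acting on the subspace of $\bigoplus_v\Ga(\u_v)$ cut out by the matching conditions encoded by $\De_\ga$ at the nodes, augmented by the Teichm\"uller deformations of $\R\wt\cM_\ga$ and by the reparametrization space $G_{\ga;\rho}$. A standard Riemann-Roch computation on each irreducible component yields $2\langle c_1(TX),\fd(v)\rangle+2n(1-\fg(v))$ for each conjugate pair $\{v,\si(v)\}\!\subset\!\nV_\C(\ga)$ and $\langle c_1(TX),\fd(v)\rangle+n(1-\fg(v))$ for each $v\!\in\!\nV_\R(\ga)$; summing these per-vertex contributions with the real dimension of $\R\wt\cM_\ga$, subtracting the codimension of $\De_\ga$ in~$X_\ga$, and adding $\dim_\R G_{\ga;\rho}$ assembles to~\eref{UfMprp_e2}. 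The contribution $n\cdot\ell(\ga,\ale(\ga,\ov\ga)_0)$ arises because each independent cycle of edges in the contracted subgraph $\ga_{\ale(\ga,\ov\ga)_0}$ makes the matching conditions at the corresponding nodes linearly dependent by a full $n$-dimensional amount.

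The principal technical obstacle is the coherent simultaneous localization of the perturbations $(A,\mu)$ across the five types of irreducible components (stable real, stable conjugate-pair, unstable simple real, unstable simple conjugate-pair, and unstable contracted)---precisely where the simplicity condition in~\eref{fBstrdfn_e} and the fibered structure of $\R\wt\cU_{g,l;k}$ are essential; secondarily, the $\ell$-correction in the index count must be matched carefully to the cycles in the contracted subgraph. The $C^{m-2}$-regularity assertion for $\pi$ is inherited from that of the section~\eref{dbardfn_e}, as recorded at the bottom of~\cite[p.~50]{MS}.
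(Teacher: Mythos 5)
Your plan is right, and the core ingredients (reduction of surjectivity to a vanishing statement for the kernel of the formal adjoint, localization of the perturbations $(A,\mu)$ using the simplicity/disjointness built into $\fB_{\ga,\vp;\rho}^*$ together with Lemmas~\ref{RTtransC_lmm} and~\ref{RTtransR_lmm}, and a per-vertex Riemann--Roch count) are exactly the ones the paper uses. But you have an unaddressed step that the paper handles as a separate lemma and that your argument silently assumes.

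Before the implicit function theorem can be applied to the section $\dbar$ over $\cH_{g,l;k}^{\om,\phi}(X)\!\times\!\fB_{\ga,\vp;\rho}^*$, one must first know that $\fB_{\ga,\vp;\rho}^*$ is itself a separable Banach submanifold of $\prod\fB_v\times\prod\fB_v^\bu$; this is the content of Lemma~\ref{UfB_lmm}, and it is not automatic. The space $\fB_{\ga,\vp;\rho}^*$ is cut out by requiring $\ev_\ga(\u)\in\De_\ga$, and one must show $\ev_\ga$ is transverse to $\De_\ga$ on the relevant open subset. On the contracted subgraph $\ga_{\ale(\ga,\ov\ga)_0}$, where the maps are constant, the matching conditions are degenerate precisely by $n\cdot\ell(\ga,\ale(\ga,\ov\ga)_0)$ dimensions, and this must be isolated into a finite-dimensional factor $\fB_{\ga,\vp}^0$ (dimension $n|\pi_0(\ga,\ale(\ga,\ov\ga)_0)|$) so that the remaining conditions~\eref{UfBlmm_e17} are imposed transversally. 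The transversality of the remaining conditions~\eref{UfB_e5} is established by choosing, for every edge outside the contracted subgraph, a flag $f_e$ lying on a non-contracted vertex and observing that the evaluation homomorphisms $L_f$ from the \emph{full} deformation spaces $\Ga(\u_v)$ (not $\Ga_0(\u_v)$) onto the $f_e$-factors are surjective, using~\eref{UfBlmm_e23}. You treat the $\ell$-term as a post-hoc correction to the index, explaining it heuristically by ``linear dependence from cycles,'' which is the right intuition, but without proving the submanifold structure the Fredholm index of $\pi$ is not even well defined. Once Lemma~\ref{UfB_lmm} is in place, your index bookkeeping (per-component indices as in~\eref{UfM_e35R},~\eref{UfM_e35C}, summed against $\dim_\R\R\wt\cM_\ga$, $\dim_\R G_{\ga;\rho}$, and the codimension~\eref{UfBlmm_e2}) is correct and matches the paper.

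A secondary point of style rather than substance: the paper does not argue by contradiction from a nonzero $\eta\in\tn{coker}\,\Phi$. Instead it fixes $\phi$- and $\wt\si_\R$-invariant open sets $W_v$ satisfying the disjointness conditions~\eref{UfM_e19a1}--\eref{UfM_e19b2}, shows $D^0_{J,\nu;\u_v}\dbar(T_v\cH\oplus\Ga_0(\u_v))=\Ga^{0,1}_J(\u_v)$ componentwise via Lemmas~\ref{RTtransC_lmm} and~\ref{RTtransR_lmm}, and then uses the vanishing relations~\eref{UfM_e27}--\eref{UfM_e31} coming from the disjoint supports to conclude surjectivity of the full homomorphism~\eref{UfM_e17}. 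Your contradiction formulation is logically equivalent, and either is fine.
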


\begin{lmm}\label{UfB_lmm}
Let $p$ and $m$ be as in Proposition~\ref{UfM_prp}. Then 
$$\fB_{\ga,\vp;\rho}^* \subset 
\prod_{v\in\nV_{\R}(\ga)}\!\!\!\!\!\!\fB_v \times\!\!
\prod_{\{v,\si(v)\}\subset\nV_{\C}(\ga)}\!\!\!\!\!\!\!\!\!\!\!\!\!\!\fB_v^{\bu}$$
is a separable  Banach submanifold of codimension
\BE{UfBlmm_e2}\codim_{\R}\fB_{\ga,\vp;\rho}^*=n\Big(|\ga|\!-\!
\ell\big(\ga,\ale(\ga,\ov\ga)_0\big)\Big)\,. \EE
\end{lmm}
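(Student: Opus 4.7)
The product $\fB:=\prod_{v\in\nV_\R(\ga)}\fB_v\!\times\!\prod_{\{v,\si(v)\}\subset\nV_\C(\ga)}\!\fB_v^\bu$ is a separable Banach manifold in the $L^p_2$-completion with $p\!>\!2$ (as in the discussion preceding Proposition~\ref{UfM_prp}), and the evaluation map $\ev_\ga\!:\fB\!\lra\!X_\ga$ of~\eref{evgadfn_e} is smooth because evaluation of an $L^p_2$ map at a fixed marked point is a continuous linear functional by Sobolev embedding. Since $\fB_{\ga,\vp;\rho}^*$ is open in $\fB_{\ga,\vp;\rho}^\star:=\ev_\ga^{-1}(\De_\ga)$, the plan is to apply the constant-rank theorem to the composition
$$\ev_\ga^\sharp\!:T_\u\fB\lra T_{\ev_\ga(\u)}X_\ga\lra N_\ga:=T_{\ev_\ga(\u)}X_\ga\big/T_{\ev_\ga(\u)}\De_\ga$$
at each $\u\!\in\!\fB_{\ga,\vp;\rho}^\star$. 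Since $\dim_\R\!N_\ga\!=\!n|\ga|$, it is enough to show that $\dim_\R\!\cok\ev_\ga^\sharp$ is locally constant and equals $n\ell(\ga,\sV_0)$ with $\sV_0\!:=\!\ale(\ga,\ov\ga)_0$.

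For each $v\!\in\!\Ver\!-\!\sV_0$, the map $u_v$ is not forced to be constant, and $T_{\u_v}\fB_v$ (respectively $T_{\u_v^\bu}\fB_v^\bu$) contains $(\phi,\si)$-real variations supported in arbitrarily small neighborhoods of any single marked point $z_f$ of $\Si_v$ with prescribed value there; this is the standard cut-off construction of \cite[proof of Proposition~3.4.2]{MS}, $\si$-symmetrized as in the proofs of Lemmas~\ref{RTtransC_lmm} and~\ref{RTtransR_lmm}. Consequently, for every edge $e\!\in\!\E(\ga)$ with at least one endpoint in $\Ver\!-\!\sV_0$, the component of $\ev_\ga^\sharp$ at $e$ surjects onto the corresponding summand of $N_\ga$, so $\cok\ev_\ga^\sharp$ is identified with the cokernel of the finite-dimensional \textit{graph-boundary} map
$$\partial\!:\bigoplus_{v\in\sV_0}\!V_v\lra\!\!\bigoplus_{e\in\sE_\ga(\sV_0)}\!\!\!\!\!\!N_{\ga;e}\,,$$
where $V_v$ is $T_{x_v}X^\phi$ (of real dimension $n$) for $v\!\in\!\nV_\R$ and $T_{x_v}X$ (of real dimension $2n$, with the convention $\xi_{\si(v)}\!=\!\phi_*\xi_v$) for $v\!\in\!\nV_\C$, and $N_{\ga;e}$ is the relevant normal fiber---isomorphic to $TX$ for $e\!\in\!\E_+$, $TX/TX^\phi$ for $e\!\in\!\E_{\R\C}$, and $TX^\phi$ for $e\!\in\!\E_{\R\R}$---so that $\dim_\R\bigoplus_{e\in\sE_\ga(\sV_0)}\!N_{\ga;e}\!=\!n|\sE_\ga(\sV_0)|$ using the $\si$-invariance identity $|\sE_\ga(\sV_0)\!\cap\!\E_\C|\!=\!2|\sE_\ga(\sV_0)\!\cap\!\E_+|$.

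A direct case analysis shows that $\ker\partial$ consists of tuples $(\xi_v)$ constant on each connected component $C$ of $\ga_{\sV_0}$, with the common value $\xi_C$ forced to lie in $TX^\phi$ when $C$ is $\si$-self-conjugate (because $C$ then contains a real vertex, an $\E_{\R\C}$-edge, or a pair of vertices swapped by $\si$, in which case connectedness forces $\xi_C\!=\!\phi_*\xi_C$) and free in $TX$ when $\si(C)\!\neq\!C$, with $\xi_{\si(C)}\!=\!\phi_*\xi_C$ determined. Summing per $\si$-orbit in $\pi_0(\ga,\sV_0)$---contributing $n$ per self-conjugate component and $2n$ per conjugate pair---gives $\dim_\R\!\ker\partial\!=\!n|\pi_0(\ga,\sV_0)|$. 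Rank-nullity then yields $\dim_\R\!\cok\partial\!=\!n|\sE_\ga(\sV_0)|\!-\!n|\sV_0|\!+\!n|\pi_0(\ga,\sV_0)|\!=\!n\ell(\ga,\sV_0)$, locally constant in $\u$, and the constant-rank theorem concludes the proof. The main obstacle is the kernel computation: the three edge types and the $\si$-action on components of $\ga_{\sV_0}$ all need to be handled uniformly in the case analysis above.
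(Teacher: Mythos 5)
Your reduction of the cokernel to the graph-boundary map $\partial$ of $\ga_{\sV_0}$, $\sV_0\!=\!\ale(\ga,\ov\ga)_0$, and the count $\dim_{\R}\ker\partial\!=\!n|\pi_0(\ga,\sV_0)|$ are exactly the computations underlying the paper's proof: the paper shows the same surjectivity onto the summands of edges with an endpoint outside~$\sV_0$ via evaluations $L_{f_e}$ at flags $f_e$ with $\ve(f_e)\!\notin\!\sV_0$, and your kernel count is equivalent to its statement that the locus $\fB_{\ga,\vp}^0$ of matching constant maps has dimension $n|\pi_0(\ga,\sV_0)|$. The difference is in how the two pieces are assembled, and this is where your argument has a genuine gap. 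To invoke the constant-rank theorem you must extend $\ev_{\ga}^{\sharp}$ to a map defined on a neighborhood of $\u$ in the ambient product (say $\pi_N\!\circ\!\ev_{\ga}$ for a tubular-neighborhood projection $\pi_N$ of $\De_{\ga}$), and you need its rank to be constant on that \emph{neighborhood}, not merely along $\fB_{\ga,\vp;\rho}^{\star}$. Rank is only lower semicontinuous, and here it genuinely jumps off the incidence locus precisely when $\ell(\ga,\sV_0)\!>\!0$: if $\ga_{\sV_0}$ contains a cycle through three or more distinct vertices, then at nearby tuples with the constant values $x_v$ pairwise distinct the kernel conditions become $\xi_{v_2}\!=\!\Phi_e(\xi_{v_1})$ for near-identity isomorphisms $\Phi_e\!=\!\ker d\rho_e$ depending on the choice of defining functions $\rho_e$ for the diagonals, and the holonomy of the $\Phi_e$ around the cycle is not the identity for a generic choice (it is curvature-type data of the tubular neighborhood), so $\ker$ shrinks and the rank increases. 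Thus the hypothesis of the constant-rank theorem fails exactly in the case where the correction term $n\ell(\ga,\sV_0)$ is nonzero; constancy of the rank along the zero set alone does not yield a submanifold (compare $f(x,y)\!=\!x^2\!+\!y^2$).

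The fix is the paper's two-step argument, which your computation already contains all the ingredients for. The components of $\ev_{\ga}$ indexed by edges in $\sE_{\ga}(\sV_0)$ depend only on the finitely many points $x_v\!\in\!X$ (the values of the constant maps), so those matching conditions cut out, inside the finite-dimensional factor, the locus where $v\!\mapsto\!x_v$ is locally constant on $\ga_{\sV_0}$ with the appropriate reality constraints; this is \emph{by inspection} a submanifold of dimension $n|\pi_0(\ga,\sV_0)|$, i.e.\ of codimension $n\big(|\sE_{\ga}(\sV_0)|\!-\!\ell(\ga,\sV_0)\big)$ -- no rank theorem is needed, and your $\ker\partial$ computation is precisely this dimension count. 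One then restricts to this submanifold and imposes the remaining conditions, indexed by $\E(\ga)\!-\!\sE_{\ga}(\sV_0)$, which by your surjectivity argument are cut transversally by the implicit function theorem for Banach manifolds, contributing codimension $n\big(|\ga|\!-\!|\sE_{\ga}(\sV_0)|\big)$. Adding the two codimensions gives \eref{UfBlmm_e2}. With this reorganization your proof is correct and agrees with the paper's.
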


\begin{proof} 
With $\sE_{\ga}(\sV)$ as defined in~\eref{sVsEsF_e}, let
\begin{gather*}
\E_+(\ga,\ov\ga)=\E_+(\ga)\!-\!\sE_{\ga}\big(\ale(\ga,\ov\ga)_0\big),\quad
\E_{\R\C}(\ga,\ov\ga)=\E_{\R\C}(\ga)\!-\!\sE_{\ga}\big(\ale(\ga,\ov\ga)_0\big),\\
\E_{\R}(\ga,\ov\ga)=\E_{\R}(\ga)\!-\!\sE_{\ga}\big(\ale(\ga,\ov\ga)_0\big).
\end{gather*}
Define
\begin{gather*}
\De_{\ga,\ov\ga}\!\equiv\! \prod_{e\in\E_+(\ga,\ov\ga)}\!\!\!\!\!\!\!\!\De_{\ga;e}
\times\prod_{e\in\E_{\R\C}(\ga,\ov\ga)}\!\!\!\!\!\!\!\!X^{\phi}\times
\prod_{e\in\E_{\R\R}(\ga,\ov\ga)}\!\!\!\!\!\!\!\!\!\De_{\ga;e}^{\phi}
\subset X_{\ga,\ov\ga}\!\equiv\!
 \prod_{e\in\E_+(\ga,\ov\ga)}\!\!\!\!\!\!\!\!X_{\ga;e}\times
\prod_{e\in\E_{\R\C}(\ga,\ov\ga)}\!\!\!\!\!\!\!\!\!X\times
\prod_{e\in\E_{\R\R}(\ga,\ov\ga)}\!\!\!\!\!\!\!\!\!X_{\ga;e}^{\phi}\,,\\
\ev_{\ga,\ov\ga}\!\equiv\!\prod_{e\in\E_+(\ga,\ov\ga)}\prod_{f\in e}\!\ev_f\times
\prod_{e\in\E_{\R\C}(\ga,\ov\ga)}\!\!\!\!\!\!\!\!\ev_{f_e}\times
\prod_{e\in\E_{\R\R}(\ga,\ov\ga)}\prod_{f\in e}\!\ev_f \!:
\prod_{v\in\nV_{\R}(\ga)}\!\!\!\!\!\!\fB_v^* \times\!\!
\prod_{\{v,\si(v)\}\subset\nV_{\C}(\ga)}\!\!\!\!\!\!\!\!\!\!\!\!\!\!\fB_v^{\bu,*}
\lra X_{\ga,\ov\ga}\,.
\end{gather*}
For each element $\u\!\in\!\fB_{\ga,\vp;\rho}^{\star}$, let
\begin{equation*}\begin{split}
&L_{\ga,\ov\ga;\u}\equiv\!
\bigoplus_{e\in\E_+(\ga,\ov\ga)}\bigoplus_{f\in e}\!L_f\oplus
\bigoplus_{e\in\E_{\R\C}(\ga,\ov\ga)}\!\!\!\!\!\!\!\!L_{f_e}\oplus
\bigoplus_{e\in\E_{\R\R}(\ga,\ov\ga)}\bigoplus_{f\in e}\!L_f \!:\\
&\hspace{.2in}
\bigoplus_{v\in\ale_{\R}(\ga,\ov\ga)_0}\!\!\!\!\!\!\!\!\Ga_0(\u_v)\!\oplus\!\!
\bigoplus_{v\in\ale_{\R}(\ga,\ov\ga)_0^c}\!\!\!\!\!\!\!\!\Ga(\u_v)\!\oplus\!\!
\bigoplus_{\{v,\si(v)\}\subset\ale_{\C}(\ga,\ov\ga)_0}\hspace{-.4in}\Ga_0(\u_v^{\bu})
\!\oplus\!\!
\bigoplus_{\{v,\si(v)\}\subset\ale_{\C}(\ga,\ov\ga)_0^c}\hspace{-.4in}\Ga(\u_v^{\bu})
\lra T_{\ev_{\ga,\ov\ga}(\u)}X_{\ga,\ov\ga}\,.
\end{split}\end{equation*}

\vspace{.2in}

\noindent
For $v\!\in\!\ale_{\R}(\ga,\ov\ga)_0$, the map component~$u_v$ of every $\u_v\!\in\!\fB_v^*$ 
is constant.
For $v\!\in\!\ale_{\C}(\ga,\ov\ga)_0$, the map components $u_v$ and $u_{\si(v)}$
of every $\u_v^{\bu}\!\in\!\fB_v^{\bu,*}$ are constant.
Thus,
\begin{equation*}\begin{split}
\fB_{\ga,\vp}^0&\equiv\Big\{\u\in
\prod_{v\in\ale_{\R}(\ga,\ov\ga)_0}\!\!\!\!\!\!\!\!\!\fB_v^* \times\!\!
\prod_{\{v,\si(v)\}\subset\ale_{\C}(\ga,\ov\ga)_0}\hspace{-.42in}\fB_v^{\bu,*}\!:
\ev_f(u_{\ve(f)})\!=\!\ev_{f'}(u_{\ve(f')})~\forall\,
\{f,f'\}\!\in\!\sE_{\ga}\big(\ale(\ga,\ov\ga)_0\big)\Big\}\\
&\subset 
\prod_{v\in\ale_{\R}(\ga,\ov\ga)_0}\!\!\!\!\!\!\!\!\!\fB_v^* \times\!\!
\prod_{\{v,\si(v)\}\subset\ale_{\C}(\ga,\ov\ga)_0}\hspace{-.42in}\fB_v^{\bu,*}
\end{split}\end{equation*}
is a smooth submanifold of dimension $n|\pi_0(\ga,\ale(\ga,\ov\ga)_0)|$
and thus of codimension 
\BE{UfBlmm_e15} \codim_{\R}\fB_{\ga,\vp}^0
=n\Big(\big|\sE_{\ga}\big(\ale(\ga,\ov\ga)_0\big)\big|\!-\!
\ell\big(\ga,\ale(\ga,\ov\ga)_0\big)\Big)\,.\EE
By definition,
\BE{UfBlmm_e17}
\fB_{\ga,\vp;\rho}^{\star}=\Big\{\u\in\fB_{\ga,\vp}^0\!\times\!
\prod_{v\in\ale_{\R}(\ga,\ov\ga)_0^c}\hspace{-.23in}\fB_v^* 
\times
\prod_{\{v,\si(v)\}\subset\ale_{\R}(\ga,\ov\ga)_0^c}
\hspace{-.41in}\fB_v^{\bu,*}\!:\ev_{\ga,\ov\ga}(\u)\!\in\!\De_{\ga,\ov\ga}\Big\}\,.\EE

\vspace{.2in}

\noindent
For each $v\!\in\!\nV_{\R}(\ga)$, $\fB_v^*$ is an open subset of~$\fB_v$. 
For each $v\!\in\!\nV_{\C}(\ga)$, $\fB_v^{\bu,*}$ is an open subset of~$\fB_v^{\bu}$. 
Since $\fB_{\ga,\vp;\rho}^*$ is an open subset of $\fB_{\ga,\vp;\rho}^{\star}$,
it is sufficient to show~that 
$$\fB_{\ga,\vp;\rho}^{\star} \subset 
\prod_{v\in\nV_{\R}(\ga)}\!\!\!\!\!\!\fB_v^* \times\!\!
\prod_{\{v,\si(v)\}\subset\nV_{\C}(\ga)}\!\!\!\!\!\!\!\!\!\!\!\!\!\!\fB_v^{\bu,*}$$
is a Banach submanifold of codimension~\eref{UfBlmm_e2}.
We show below that 
\BE{UfB_e5} T_{\ev_{\ga,\ov\ga}(\u)}X_{\ga,\ov\ga}=
\Im\,L_{\ga,\ov\ga;\u} +T_{\ev_{\ga,\ov\ga}(\u)}\De_{\ga,\ov\ga}\EE
for every $\u\!\in\!\fB_{\ga,\vp;\rho}^{\star}$.
Thus, the smooth map
$$\ev_{\ga,\ov\ga}\!:\fB_{\ga,\vp}^0\!\times\!
\prod_{v\in\ale_{\R}(\ga,\ov\ga)_0^c}\hspace{-.23in}\fB_v^* 
\times
\prod_{\{v,\si(v)\}\subset\ale_{\R}(\ga,\ov\ga)_0^c}
\hspace{-.41in}\fB_v^{\bu,*}\lra X_{\ga,\ov\ga}$$ 
is transverse to the submanifold $\De_{\ga,\ov\ga}$.
Along with~\eref{UfBlmm_e17}, the Implicit Function for Banach manifolds, 
and~\eref{UfBlmm_e15}, this implies  the lemma.\\

\noindent
For each edge~$e$ in $\E_{\R\C}(\ga,\ov\ga)$, $f_e\!\not\in\!\ale(\ga,\ov\ga)_0$.
For each edge~$e$ in 
\hbox{$\E_{\C}(\ga,\ov\ga)\!\cup\!\E_{\R\R}(\ga,\ov\ga)$},
there exists a flag $f_e\!\in\!e$ such that $f_e\!\not\in\!\ale(\ga,\ov\ga)_0$.
For each $v\!\in\!\Ver$, let
\begin{gather*}
\Fl_v^*=\ve^{-1}(v)\!\cap\!\big\{f_e\!:e\!\in\!\E_+(\ga,\ov\ga)\!\cup\!
\E_{\R\C}(\ga,\ov\ga)\!\cup\!\E_{\R\R}(\ga,\ov\ga)\big\}, \\
\Fl_{v;\R}^*=\Fl_v^*\!\cap\!S_{v;\R}(\ga),\qquad
\Fl_{v;\C}^*=\Fl_v^*\!\cap\!S_{v;\C}(\ga).
\end{gather*}
If $v\!\in\!\nV_{\C}(\ga)$, $\Fl_{v;\R}^*\!=\!\eset$ and $\Fl_{v;\C}^*\!=\!\Fl_v^*$.
By~\eref{Eplus_e},
\BE{UfBlmm_e23}\si(\Fl_v^*)\cap\Fl_{\si(v)}^*=\Fl_{v;\R}^*,\quad
\big\{f_e\!:e\!\in\!\E_+(\ga,\ov\ga)\!\cup\!
\E_{\R\C}(\ga,\ov\ga)\!\cup\!\E_{\R\R}(\ga,\ov\ga)\big\}=
\bigcup_{v\in\ale(\ga,\ov\ga)_0^c}\!\!\!\!\!\!\!\Fl^*_v\,.\EE

\vspace{.1in}

\noindent
Let $\u\!\in\!\fB_{\ga,\vp;\rho}^{\star}$ and
$$\pi_{\ga,\ov\ga}\!: T_{\ev_{\ga,\ov\ga}(\u)}X_{\ga,\ov\ga}\lra
\bigoplus_{e\in\E_+(\ga,\ov\ga)}\!\!\!\!\!\!T_{\ev_{f_e}(u_{\ve(f_e)})}X
\oplus
\bigoplus_{e\in\E_{\R\C}(\ga,\ov\ga)}\!\!\!\!\!\!\!\!T_{\ev_{f_e}(u_{\ve(f_e)})}X
\oplus
\bigoplus_{e\in\E_{\R\R}(\ga,\ov\ga)}\!\!\!\!\!\!\!T_{\ev_{f_e}(u_{\ve(f_e)})}X^{\phi}$$
be the projection on the components indexed by the flags of the form~$f_e$.
By the first statement in~\eref{UfBlmm_e23}, the homomorphisms
$$\bigoplus_{f\in\Fl_{v;\C}^*}\!\!\!\!L_f\oplus
\bigoplus_{f\in\Fl_{v;\R}^*}\!\!\!\!L_f\!:  \Ga(\u_v)\!\equiv\!\Ga(u_v)^{\phi,\si} 
\lra  \bigoplus_{f\in\Fl_{v;\C}^*}\!\!\!\!T_{u_v(z_f)}X
~\oplus\bigoplus_{f\in\Fl_{v;\R}^*}\!\!\!\!T_{u_v(z_f)}X^{\phi}$$
with $v\!\in\!\ale_{\R}(\ga,\ov\ga)_0^c$ and the homomorphisms
$$\bigoplus_{f\in\Fl_v^*}\!\!\!L_f\!: 
\Ga(\u_v)\!\equiv\!\Ga\big(u_v\!\sqcup\!u_{\si(v)}\big)^{\phi,\si}
\lra \bigoplus_{f\in\Fl_v^*}\!\!\!T_{u_v(z_f)}X $$
with $v\!\in\!\ale_{\C}(\ga,\ov\ga)_0^c$ are surjective.
Thus, the restriction of~$\pi_{\ga,\ov\ga}$ to the first subspace 
on the right-hand side of~\eref{UfB_e5} is surjective as well.
This in turn implies~\eref{UfB_e5}.
\end{proof}

\begin{proof}[{\bf{\emph{Proof of Proposition~\ref{UfM_prp}}}}]
Let
\BE{UfM_e3}\fF_v\lra\cH_{g,l;k}^{\om,\phi}(X)\!\times\!\fB_v,~~v\!\in\!\ale_{\R}(\ga,\ov\ga)_0^c,
\quad\hbox{and}\quad
\fF_v^{\bu}\lra\cH_{g,l;k}^{\om,\phi}(X)\!\times\!\fB_v^{\bu},~~v\!\in\!\ale_{\C}(\ga,\ov\ga)_0^c,\EE
be the bundles with fibers $\Ga^{0,1}_J(\u_v)$ and $\Ga^{0,1}_J(\u_v^{\bu})$, respectively.
Denote~by
\begin{alignat*}{2}
\pi_v\!:\cH_{g,l;k}^{\om,\phi}(X)\!\times\!
\prod_{v\in\nV_{\R}(\ga)}\!\!\!\!\!\!\fB_v \times\!\!
\prod_{\{v,\si(v)\}\subset\nV_{\C}(\ga)}\!\!\!\!\!\!\!\!\!\!\!\!\!\!\fB_v^{\bu}
&\lra \cH_{g,l;k}^{\om,\phi}(X)\!\times\!\fB_v, &\quad &v\!\in\!\ale_{\R}(\ga,\ov\ga)_0^c,\\
\pi_v^{\bu}\!:\cH_{g,l;k}^{\om,\phi}(X)\!\times\!
\prod_{v\in\nV_{\R}(\ga)}\!\!\!\!\!\!\fB_v \times\!\!
\prod_{\{v,\si(v)\}\subset\nV_{\C}(\ga)}\!\!\!\!\!\!\!\!\!\!\!\!\!\!\fB_v^{\bu}
&\lra \cH_{g,l;k}^{\om,\phi}(X)\!\times\!\fB_v^{\bu}, &\quad &v\!\in\!\ale_{\C}(\ga,\ov\ga)_0^c,
\end{alignat*}
the projection maps.
Define sections~$\dbar_v$ and~$\dbar_v^{\bu}$ of~\eref{UfM_e3} 
by~\eref{dbardfn_e} with $(\u,\Si)$ replaced
by $(\u_v,\Si_v)$ and $(\u_v\!\sqcup\!\u_{\si(v)},\Si_v\!\sqcup\!\Si_{\si(v)})$, 
respectively.\\

\noindent
The section~$\dbar$ in~\eref{dbardfn_e} is the restriction of the section
\BE{UfM_e5}\begin{split}
&\dbar\!\equiv\!\bigoplus_{v\in\ale_{\R}(\ga,\ov\ga)_0^c}
\hspace{-.19in}\pi_v^*\dbar_v
\oplus\bigoplus_{\{v,\si(v)\}\subset\ale_{\C}(\ga,\ov\ga)_0^c}
\hspace{-.39in}\pi_v^{\bu\,*}\dbar_v^{\bu}\!:\\
&\hspace{.5in}
\cH_{g,l;k}^{\om,\phi}(X)\!\times\!
\prod_{v\in\nV_{\R}(\ga)}\!\!\!\!\!\!\fB_v \times\!\!
\prod_{\{v,\si(v)\}\subset\nV_{\C}(\ga)}\!\!\!\!\!\!\!\!\!\!\!\!\!\!\fB_v^{\bu}
\lra \bigoplus_{v\in\ale_{\R}(\ga,\ov\ga)_0^c}\hspace{-.19in}\pi_v^*\fF_v
\oplus
\bigoplus_{\{v,\si(v)\}\subset\ale_{\C}(\ga,\ov\ga)_0^c}\hspace{-.39in}\pi_v^{\bu\,*}\fF_v^{\bu}
\end{split}\EE
to the base of the bundle~\eref{fFbndldfn_e}.
By Lemma~\ref{UfB_lmm}, the latter is a separable Banach submanifold of
the base of~\eref{UfM_e5}.
We show that the bundle section~\eref{dbardfn_e} is transverse to the zero set.
Fix an element $(J,\nu;\u)$ of its zero set~\eref{UfMdfn_e} with~$\u$ 
as in~\eref{bfudfn_e}.\\

\noindent
For each $v\!\in\!\ale_{\R}(\ga,\ov\ga)_0^c$, let
\begin{gather*}
D_{J,\nu;\u_v}^0\dbar\!:
T_{(J,\nu)}\cH_{g,l;k}^{\om,\phi}(X)\!\oplus\!\Ga_0(\u_v)\lra\Ga^{0,1}_J(\u_v),\\
D_{J,\nu;\u_v}^0\dbar(A,\nu';\xi)=D_{J,\nu_{\ga,\vp;\rho};\u_v}^0\xi
+\frac12A\!\circ\!\tnd u_v\!\circ\!\fJ
-\{q_{\ga,\vp;v}\!\times\!u_v\}^*\nu',
\end{gather*}
with the last term above defined to be~0 if $v\!\in\!\ale_{\R}(\ga,\ov\ga)_{\bu}$.
We note~that 
\BE{UfM_e15R}\begin{aligned}
A\!\circ\!\tnd u_v\!\circ\!\fJ&=0 &\quad
&\hbox{if}\quad u_v(\Si_v)\!\cap\!\supp(A)=\eset,\\
\{q_{\ga,\vp;v}\!\times\!u_v\}^*\nu'&=0 &\quad
&\hbox{if}\quad
\big(q_{\ga,\vp;v}(\Si_v)\!\times\!X\big)\!\cap\!\supp(\nu')=\eset.
\end{aligned}\EE
For each $v\!\in\!\ale_{\C}(\ga,\ov\ga)_0^c$, let
\begin{gather*}
D_{J,\nu;\u_v^{\bu}}^0\dbar\!:
T_{(J,\nu)}\cH_{g,l;k}^{\om,\phi}(X)\!\oplus\!\Ga_0(\u_v^{\bu})\lra\Ga^{0,1}_J(\u_v^{\bu}),\\
D_{J,\nu;\u_v^{\bu}}^0\dbar(A,\nu';\xi)=D_{J,\nu;\u_v^{\bu}}^0\xi
+\frac12\big(A\!\circ\!\tnd u_v\!\circ\!\fJ,A\!\circ\!\tnd u_{\si(v)}\!\circ\!\fJ\big)
-\big\{q_{\ga,\vp_{\ga,\vp;\rho};v}^{\bu}\!\times\!(u_v\!\sqcup\!u_{\si(v)})\big\}^*\nu',
\end{gather*}
with the last term above defined to be~0 if $v\!\in\!\ale_{\C}(\ga,\ov\ga)_{\bu}$.
We note~that 
\BE{UfM_e15C}\begin{aligned}
\big(A\!\circ\!\tnd u_v\!\circ\!\fJ,A\!\circ\!\tnd u_{\si(v)}\!\circ\!\fJ\big)&=0 
&\quad&\hbox{if}\quad u_v(\Si_v)\!\cap\!\supp(A)=\eset,\\
\{q_{\ga,\vp;v}^{\bu}\!\times\!u_v\}^*\nu'&=0 &\quad
&\hbox{if}\quad
\big(q_{\ga,\vp;v}^{\bu}(\Si_v)\!\times\!X\big)\!\cap\!\supp(\nu')=\eset.
\end{aligned}\EE

\vspace{.2in}

\noindent
The homomorphisms $D_{J,\nu;\u_v}^0\dbar$ and $D_{J,\nu;\u_v^{\bu}}^0\dbar$
are the restrictions of the linearizations of~$\dbar_v$ at~$(J,\nu;\u_v)$
and of~$\dbar_v^{\bu}$ at~$(J,\nu;\u_v^{\bu})$ to
\begin{equation*}\begin{split}
T_{_{(J,\nu)}}\cH_{g,l;k}^{\om,\phi}(X)\!\oplus\!\Ga_0(\u_v)
&\subset T_{(J,\nu;\u_v)}\big(\cH_{g,l;k}^{\om,\phi}(X)\!\times\!\fB_v) 
\qquad\hbox{and}\\
T_{(J,\nu)}\cH_{g,l;k}^{\om,\phi}(X)\!\oplus\!\Ga_0(\u_v^{\bu})
&\subset T_{(J,\nu;\u_v^{\bu})}\big(\cH_{g,l;k}^{\om,\phi}(X)\!\times\!\fB_v^{\bu}),
\end{split}\end{equation*}
respectively.
Since $\Ga_0(\u)\!\subset\!T_{\u}\fB_{\ga,\vp;\rho}^*$,
it is sufficient to show that the homomorphism
\BE{UfM_e17}\begin{split}
&\bigoplus_{v\in\ale_{\R}(\ga,\ov\ga)_0^c}\hspace{-.19in}\pi_v^*D_{J,\nu;\u_v}^0\dbar
\oplus
\bigoplus_{\{v,\si(v)\}\subset\ale_{\C}(\ga,\ov\ga)_0^c}\hspace{-.39in}
\pi_v^{\bu\,*}D_{J,\nu;\u_v^{\bu}}^0\dbar\!:\\
&\hspace{.7in}
T_{(J,\nu)}\cH_{g,l;k}^{\om,\phi}(X)\!\oplus\!\Ga_0(\u)
\lra \bigoplus_{v\in\ale_{\R}(\ga,\ov\ga)_0^c}\hspace{-.19in}\Ga^{0,1}_J(\u_v)
\oplus
\bigoplus_{\{v,\si(v)\}\subset\ale_{\C}(\ga,\ov\ga)_0^c}\hspace{-.39in}
\Ga^{0,1}_J(\u_v^{\bu})
\end{split}\EE
is surjective.\\

\noindent
Since $\u\!\in\!\fB_{\ga,\vp;\rho}^*$,
the subsets $u_v(\Si_v)\!\subset\!X$ with 
$v\!\in\!\ale(\ga,\ov\ga)_{\bu}$ are distinct. 
There thus exist $\phi$-invariant open subsets $W_v\!\subset\!X$
with \hbox{$v\!\in\!\ale(\ga,\ov\ga)_{\bu}$} such~that 
\begin{gather}
\label{UfM_e19a1}
u_v(\Si_v)\cap W_v\neq\eset\qquad\forall\,v\!\in\!\ale(\ga,\ov\ga)_{\bu}, \\
\label{UfM_e19a2}
u_{v_1}(\Si_{v_1})\cap W_{v_2}=\eset,~~
W_{v_1}\cap W_{v_2}=\eset \qquad\forall\,v_1,v_2\!\in\!\ale(\ga,\ov\ga)_{\bu},
\,v_1\!\neq\!v_2,\si(v_2).
\end{gather}
For $v\!\in\!\nV_{\C}(\ov\ga)$, let $q_{\ga,\vp;v}\!=\!q_{\ga,\vp;v}^{\bu}|_{\Si_v}$.
The subsets $q_{\ga,\vp;v}(\Si_v)\!\subset\!\R\wt\cU_{g,l;k}$ with $v\!\in\!\nV(\ov\ga)$
are also distinct.
There thus exist $\wt\si_{\R}$-invariant open subsets \hbox{$W_v\!\subset\!\R\wt\cU_{g,l;k}$}
with $v\!\in\!\nV(\ov\ga)$ such~that
\begin{gather}
\label{UfM_e19b1}
q_{\ga,\vp;v}(\Si_v)\cap W_v\neq\eset \qquad\forall\,v\!\in\!\nV(\ov\ga),\\
\label{UfM_e19b2}
q_{\ga,\vp;v_1}(\Si_{v_1})\cap W_{v_2}=\eset,~~W_{v_1}\cap W_{v_2}=\eset
\qquad\forall\,v_1,v_2\!\in\!\nV(\ov\ga),
\,v_1\!\neq\!v_2,\si(v_2).
\end{gather}
Define
\begin{alignat*}{2}
T_v\cH&=\big\{A\!\in\!T_J\cJ_{\om}^{\phi}\!:\supp(A)\!\subset\!W_v\big\}
&\qquad&\forall\,v\!\in\!\ale(\ga,\ov\ga)_{\bu}, \\
T_v\cH&=\big\{\nu'\!\in\!\Ga^{0,1}_{g,l;k}(X;J)^{\phi}\!:
\supp(\nu')\!\subset\!W_v\!\times\!X\big\}
&\qquad&\forall\,v\!\in\!\nV(\ov\ga)\,.
\end{alignat*}

\vspace{.2in}

\noindent
By Lemmas~\ref{RTtransC_lmm} and~\ref{RTtransR_lmm}, \eref{UfM_e19a1}, and~\eref{UfM_e19b1}, 
\BE{UfM_e25}\begin{aligned}
D_{J,\nu;\u_v}^0\dbar\big(T_v\cH\!\oplus\!\Ga_0(\u_v)\big)&=\Ga^{0,1}_J(\u_v)
&~~&\forall~v\!\in\!\ale_{\R}(\ga,\ov\ga)_0^c,\\
D_{J,\nu;\u_v^{\bu}}^0\dbar\big(T_v\cH\!\oplus\!\Ga_0(\u_v^{\bu})\big)&=\Ga^{0,1}_J(\u_v^{\bu})
&~~&\forall~v\!\in\!\ale_{\C}(\ga,\ov\ga)_0^c\,.
\end{aligned}\EE
By definition,
\BE{UfM_e27}\begin{split}
&D_{J,\nu;\u_{v_1}}^0\!\dbar\big(\Ga_0(\u_{v_1'})\big),
D_{J,\nu;\u_{v_1}}^0\!\dbar\big(\Ga_0(\u_{v_2}^{\bu})\big),
D_{J,\nu;\u_{v_2}^{\bu}}^0\!\dbar\big(\Ga_0(\u_{v_1})\big),
D_{J,\nu;\u_{v_2}^{\bu}}^0\!\dbar\big(\Ga_0(\u_{v_2'}^{\bu})\big)=\{0\},\\
&\hspace{1in}
\forall~v_1,v_1'\!\in\!\ale_{\R}(\ga,\ov\ga)_0^c,~v_2,v_2'\!\in\!\ale_{\C}(\ga,\ov\ga)_0^c,~
v_1\!\neq\!v_1',~v_2\!\neq\!v_2',\si(v_2').
\end{split}\EE
By~\eref{UfM_e15R}, \eref{UfM_e15C}, and 
the first statements in~\eref{UfM_e19a2} and~\eref{UfM_e19b2},
\BE{UfM_e29}\begin{split}
&\hspace{1.5in}
D_{J,\nu;\u_{v_1}}^0\dbar\big(T_v\cH\big),D_{J,\nu;\u_{v_2}^{\bu}}^0\dbar\big(T_v\cH\big)
=\{0\}\\
&\forall~v_1\!\in\!\ale_{\R}(\ga,\ov\ga)_0^c,~
v_2\!\in\!\ale_{\C}(\ga,\ov\ga)_0^c,~v\!\in\!\ale(\ga,\ov\ga)_0^c,~v\!\neq\!v_1,v_2,\si(v_2),~
(v_i,v)\not\in\nV(\ov\ga)\!\times\!\ale(\ga,\ov\ga)_{\bu}.
\end{split}\EE
By the last statements in~\eref{UfM_e19a2} and~\eref{UfM_e19b2},
\BE{UfM_e31}
\bigoplus_{v\in\ale_{\R}(\ga,\ov\ga)_0^c}\hspace{-.19in}\big(T_v\cH\!\oplus\!\Ga_0(\u_v)\big)
\oplus
\bigoplus_{\{v,\si(v)\}\subset\ale_{\C}(\ga,\ov\ga)_0^c}\hspace{-.39in}
\big(T_v\cH\!\oplus\!\Ga_0(\u_v^{\bu})\big)
\subset T_{(J,\nu)}\cH_{g,l;k}^{\om,\phi}(X)\!\oplus\!\Ga_0(\u).\EE
By~\eref{UfM_e25}-\eref{UfM_e31}, the homomorphism~\eref{UfM_e17} is surjective.
This establishes the first claim of the proposition.\\

\noindent
Let $G_v$ for $v\!\in\!\nV_{\C}(\ga)$ with $|\ve^{-1}(v)|\!\le\!2$ and 
$G_{\rho;v}$ for $v\!\in\!\nV_{\R}(\ga)$ with $|\ve^{-1}(v)|\!\le\!2$
be as in Section~\ref{RTstr_subs}.
Denote by $G_v$ for $v\!\in\!\nV_{\C}(\ga)$ with $|\ve^{-1}(v)|\!\ge\!3$ and 
$G_{\rho;v}$ for $v\!\in\!\nV_{\R}(\ga)$ with $|\ve^{-1}(v)|\!\ge\!3$
the trivial groups.
Thus,
\begin{alignat*}{2}
\dim_{\R}\!\R\wt\cM_{\ga;v}&=3\big(\fg(v)\!-\!1\big)+\big|\ve^{-1}(v)\big|+
\dim_{\R}\!G_{\rho;v} &\qquad&\forall\,v\!\in\!\nV_{\R}(\ga),\\
\dim_{\R}\!\wt\cM_{\ga;v}^{\bu}&=6\big(\fg(v)\!-\!1\big)+2\big|\ve^{-1}(v)\big|+
\dim_{\R}\!G_v &\qquad&\forall\,v\!\in\!\nV_{\C}(\ga).
\end{alignat*}
Since $\fg(\si(v))\!=\!\fg(v)$ and $|\ve^{-1}(\si(v))|\!=\!|\ve^{-1}(v)|$, it follows that 
\BE{UfM_e35}\begin{split}
\sum_{v\in\ale_{\R}(\ga,\ov\ga)_0^c}\hspace{-.22in}\dim_{\R}\!\R\wt\cM_{\ga;v}
+\sum_{\{v,\si(v)\}\subset\ale_{\C}(\ga,\ov\ga)_0^c}\hspace{-.42in}
\dim_{\R}\!\wt\cM_{\ga;v}^{\bu}
&=\sum_{v\in\Ver}\!\!\Big(3\big(\fg(v)\!-\!1\big)\!+\!\big|\ve^{-1}(v)\big|\Big)
+\dim_{\R}\!G_{\ga;\rho}\\
&=3(g'\!-\!1)\!+\!2l\!+\!k\!-\!|\ga|\!+\!\dim_{\R}\!G_{\ga;\rho};
\end{split}\EE
the second equality above follows from~\eref{stgacnd_e} with $\ov\ga$ replaced 
by $\ga\!\in\!\cA_{g',l;k}^{\phi}(B')$.\\

\noindent
Let $v\!\in\!\ale_{\R}(\ga,\ov\ga)_0^c$. By the proof of \cite[Proposition~3.6]{XCapsSetup},
the restriction
$$D_{J,\nu;\u_v}\!:\Ga(\u_v)\lra \Ga^{0,1}_{J,\fJ}(\u_v)$$
of $D_{J,\nu;u_v}$ is a Fredholm operator of index 
$$\ind_{\R}\!D_{J,\nu;\u_v}=\blr{c_1(TX),\fd(v)}+n\big(1\!-\!\fg(v)\big).$$
Thus, the restriction
$$\wt{D}_{J,\nu;\u_v}\!:T_{\u_v}\fB_v\lra\Ga^{0,1}_{J,\fJ}(\u_v)$$
of the linearization  of~$\dbar_v$
at $(J,\nu_{\ga,\vp;\rho};\u_v)$ is   a Fredholm operator of index 
\BE{UfM_e35R}\ind_{\R}\!\wt{D}_{J,\nu;\u_v}=
\blr{c_1(TX),\fd(v)}+n\big(1\!-\!\fg(v)\big)+\dim_{\R}\!\R\wt\cM_{\ga;v}\,.\EE
This formula is also valid for $v\!\in\!\ale_{\R}(\ga,\ov\ga)_0$ if $\dbar_v$
is replaced by the section of the  rank~0 bundle.\\

\noindent
Let $v\!\in\!\ale_{\C}(\ga,\ov\ga)_0^c$. By the reasoning at the beginning of the proof of
Lemma~\ref{RTtransC_lmm} and \cite[Theorem~C.1.10]{MS}, the restriction
$$D_{J,\nu;\u_v}^{\bu}\!:\Ga(\u_v^{\bu})\lra \Ga^{0,1}_{J,\fJ}(\u_v^{\bu})$$
of $D_{J,\nu;u_v}^{\bu}$ is a Fredholm operator of index 
$$\ind_{\R}\!D_{J,\nu;\u_v}^{\bu}=2\blr{c_1(TX),\fd(v)}+2n\big(1\!-\!\fg(v)\big).$$
Thus, the restriction
$$\wt{D}_{J,\nu;\u_v^{\bu}}\!:T_{\u_v^{\bu}}\fB_v^{\bu}\lra\Ga^{0,1}_{J,\fJ}(\u_v^{\bu})$$
of the linearization  of $\dbar_v^{\bu}$
at $(J,\nu_{\ga,\vp;\rho};\u_v)$ is   a Fredholm operator of index 
\BE{UfM_e35C}\ind_{\R}\!\wt{D}_{J,\nu;\u_v^{\bu}}=
2\blr{c_1(TX),\fd(v)}+2n\big(1\!-\!\fg(v)\big)+\dim_{\R}\!\wt\cM_{\ga;v}^{\bu}\,.\EE
This formula is also valid for $v\!\in\!\ale_{\C}(\ga,\ov\ga)_0$ if $\dbar_v^{\bu}$
is replaced by the section of the  rank~0 bundle.\\

\noindent
By~\eref{UfM_e35}-\eref{UfM_e35C},  
\eref{stgacnd_e} with $\ov\ga$ replaced $\ga\!\in\!\cA_{g',l;k}^{\phi}(B')$,
and \eref{gafdcond_e} with $B$ replaced by~$B'$, 
the restriction
$$\wt{D}_{J,\nu;\u}\!:
\bigoplus_{v\in\ale_{\R}(\ga,\ov\ga)_0^c}\hspace{-.19in}T_{\u_v}\fB_v
\oplus
\bigoplus_{\{v,\si(v)\}\subset\ale_{\C}(\ga,\ov\ga)_0^c}\hspace{-.39in}
T_{\u_v^{\bu}}\fB_v^{\bu}\lra\Ga^{0,1}_{J,\fJ}(\u)$$
of the linearization  of~$\dbar$
at $(J,\nu;\u)$ is   a Fredholm operator of index 
\begin{equation*}\begin{split}
\ind_{\R}\!\wt{D}_{J,\nu;\u}&=
\blr{c_1(TX),B'}\!+\!(n\!-\!3)(1\!-\!g')\!+\!2l\!+\!k\!
+(n\!-\!1)|\ga|\!+\!\dim_{\R}\!G_{\ga;\rho}\\
&=\dim_{g',l;k}(B')+(n\!-\!1)|\ga|+\dim_{\R}\!G_{\ga;\rho}\,.
\end{split}\end{equation*}
Along with Lemma~\ref{UfB_lmm}, this implies that the restriction
\BE{UfM_e39}\wt{D}_{J,\nu;\u}'\!:T_{\u}\fB_{\ga,\vp;\rho}^*\lra \Ga^{0,1}_{J,\fJ}(\u)\EE
of the linearization  of~\eref{dbardfn_e}
at $(J,\nu;\u)$ is  a Fredholm operator of index~\eref{UfMprp_e2}.
The last claim of the proposition follows from this conclusion by
the reasoning at the beginning of the proof of \cite[Theorem~3.1.6(ii)]{MS}.
\end{proof}

\noindent
By the reasoning in the proof of \cite[Theorem~3.1.6(ii)]{MS},
$(J,\nu)$ is a regular value of~$\pi$ if and only if 
the operator~\eref{UfM_e39} is surjective for every element~$\u$
of the preimage
\BE{wtfMsm_e}\wt\fM_{\ga,\vp;\rho}^*(J,\nu)\equiv\pi^{-1}(J,\nu)\subset
\fU\wt\fM_{\ga,\vp;\rho}^*(J,\nu)\EE
of $(J,\nu)$.
Suppose $m\!-\!2>\!\ind_{\R}\pi$.
By the Sard-Smale Theorem, the set~$\wh\cH^m$ of regular values of~$\pi$ 
is then a Baire subset of second category in (the $C^m$-completion~of) 
$\cH_{g,l;k}^{\om,\phi}(X)$ in the $C^m$-topology.
Along with Taubes' argument in the proof of \cite[Theorem~3.1.6(ii)]{MS},
this implies that the subset 
$$\wh\cH_{g,l;k}^{\om,\phi}(X)\subset\cH_{g,l;k}^{\om,\phi}(X)$$
of smooth pairs $(J,\nu)$ so that the operator~\eref{UfM_e39} is surjective 
for every element~$\u$ of $\wt\fM_{\ga,\vp;\rho}^*(J,\nu)$ is Baire of second category
in the space of all smooth pairs~$(J,\nu)$ in the $C^{\i}$-topology.
For every pair~$(J,\nu)$ in this subset, the left-hand side in~\eref{wtfMsm_e}
is a smooth submanifold of the right-hand side of~\eref{UfMprp_e1}  
of dimension~\eref{UfMprp_e2}.
The group~$G_{\ga;\rho}^0$ acts smoothly and freely on this submanifold.
The smooth structure on the quotient  descends to a smooth structure 
on~$\cZ_{\ga,\vp}^*(J,\nu)$.\\

\vspace{.2in}

\noindent
{\it Institut de Math\'ematiques de Jussieu - Paris Rive Gauche,
Universit\'e Pierre et Marie Curie, 
4~Place Jussieu,
75252 Paris Cedex 5,
France\\
penka.georgieva@imj-prg.fr}\\

\noindent
{\it Department of Mathematics, Stony Brook University, Stony Brook, NY 11794\\
azinger@math.stonybrook.edu}


\end{document}